\documentclass{amsart}
\usepackage{graphicx} 
\usepackage[margin=1in]{geometry}
\usepackage{microtype}
\usepackage{amsfonts, amssymb, amsmath}
\usepackage{enumitem}
\usepackage{cite}
\usepackage{xcolor}
\usepackage{hyperref}

\hypersetup{
    colorlinks=true,
    linkcolor=blue,
    citecolor=blue,
    urlcolor=blue
}

\hypersetup{
  pdftitle={123-Avoiding Permutations with an Adjacency Constraint},
  pdfauthor={Nathaniel Nadler},
  pdfsubject={Pattern avoidance and bounded adjacent differences},
  pdfkeywords={
    pattern avoidance,
    bounded-gap permutations,
    generating trees,
    transfer matrices,
    rational generating functions,
    Hamiltonian paths
  }
}

\usepackage{amsfonts,amsmath, amsthm}
\title{123-Avoiding Permutations with an Adjacency Constraint}
\author{Nathaniel Nadler}
\thanks{I extend my deepest thanks to my advisor, Dr. Katarzyna Kowal.}
\address{Ramapo College of New Jersey}
\email{nnadler@ramapo.edu}
\date{September 2026}
\newtheorem{theorem}{Theorem}
\newtheorem{lemma}{Lemma}
\newtheorem{prop}{Proposition}
\newtheorem{cor}{Corollary}
\newtheorem{definition}{Definition}
\newtheorem{remark}{Remark}

\begin{document}

\begin{abstract}
   We study permutations $\pi \in S_n$ that avoid $123$ while satisfying the adjacency constraint $\lvert \pi_{i+1}-\pi_i\rvert \le m$ for some $m\in\mathbb{Z}^{+}$. We first show that every admissible permutation is localized within vertical distance $m$ of the reverse diagonal, revealing a strong global restriction imposed by the interaction between pattern avoidance and bounded adjacency. For every fixed $m$, we then construct an exact finite-state description of the family, with precisely $2^{m+1}-m-1$ realizable states. This yields an explicit transfer-matrix enumeration and, in particular, a rational generating function for every fixed $m$; consequently, the enumeration sequence satisfies an eventual constant-coefficient linear recurrence. We further identify the exponential growth rate with the spectral radius $\rho_m$ of the corresponding transfer matrix and show that $\rho_m<4$, is nondecreasing in $m$, and converges to the Catalan growth constant $4$ as $m\to\infty$. Finally, we show that the bounded-adjacency constraint breaks the ordinary length-three Wilf equivalence, and we develop algorithmic and graph-theoretic interpretations of the family, including a correspondence with constrained Hamiltonian paths in powers of the path graph. 
\end{abstract}
\maketitle

\section{Introduction}

Pattern avoidance studies permutations by asking which smaller orderings can or cannot appear inside them. One of the most classical examples is $123$-avoidance. A permutation avoids $123$ when it contains no three entries, appearing from left to right, whose values are also increasing. These permutations are well understood: they form a Catalan family and are counted by the Catalan numbers~\cite{SimionSchmidt1985}.

In this paper, we ask what happens when $123$-avoidance is combined with a second condition of a very different kind. Rather than restricting the relative order of entries that may be far apart, we also restrict how much the permutation may change from one position to the next. Restrictions on gaps between neighboring entries have previously been studied in their own right. Avgustinovich and Kitaev studied uniquely \(k\)-determined permutations, which are equivalent, after inversion and a parameter shift, to non-anchored bounded-gap permutations. Their finite-prohibition description makes transfer-matrix enumeration possible~\cite{AvgustinovichKitaev2008,GillespieMonksMonks2020}. Gillespie, Monks, and Monks later studied the anchored bounded-gap problem and proved rationality of its generating function for every fixed gap bound~\cite{GillespieMonksMonks2020}.

More recently, the interaction between bounded adjacency and pattern avoidance was considered for \(132\)-avoidance~\cite{Nadler2026}. Mayama and Akita subsequently developed a finite-state treatment of the adjacency-constrained \(132\) problem for every fixed \(m\), using a two-sided state system based on endpoint deficiencies~\cite{MayamaAkita2026}. The \(123\)-avoiding case considered here has a different insertion structure: legality of inserting a new maximum is controlled by an initial decreasing segment. This leads to the active-profile description developed below.

We fix $m\in\mathbb{Z}_{>0}$, requiring
\[
\lvert \pi_{i+1}-\pi_i\rvert\leq m
\]
for every $1\leq i<n$.

The interest is in the interaction between the two restrictions; avoiding $123$ is a global condition --- the three entries involved may be separated by many positions --- whereas the adjacency bound is local --- it only compares neighboring entries. On its own, neither condition says that a permutation must stay close to one particular ordering. However, when the two are imposed together that is exactly what happens.

Our first main result shows that every admissible permutation stays close to the decreasing permutation. More precisely, if $\pi\in\mathcal{C}_{n,m}$, then
\[
\lvert \pi_i-(n+1-i)\rvert\leq m
\]
for every $1\leq i\leq n$. Since $n+1-i$ is the entry in position $i$ of the decreasing permutation $n,n-1,\ldots,1$, this says that every entry of $\pi$ differs from the corresponding entry of the decreasing permutation by at most $m$. This is the main structural piece behind the paper. Interestingly, the adjacency condition only controls consecutive entries, but once $123$-avoidance is added, the effect becomes global. Specifically, the entire permutation is forced to remain close to the reverse diagonal.

Furthermore, that observation leads to the enumeration. We build permutations by inserting a new maximum and ask where that insertion can be made without violating either condition. Not every part of the permutation matters for this question. For fixed $m$, only a bounded amount of information near the beginning of the permutation affects where the next maximum may be inserted. We record that information using \emph{active profiles}. The profile keeps track of the entries that matter for future insertions, measured by their distance below the current maximum. The key point is that two permutations with the same active profile behave the same way under the next insertion: corresponding legal insertions lead to the same new profiles. The full permutation may be arbitrarily long, but for fixed $m$ there are only finitely many possible profiles.

We characterize those profiles exactly and show that there are
\[
2^{m+1}-m-1
\]
realizable states, including the state in which no further insertion is possible. The possible moves between these states define a finite transfer matrix $T_m$. If $u_{n,m}$ records how many permutations of length $n$ occur in each state, then
\[
u_{n+1,m}=u_{n,m}T_m.
\]
In this way, the original permutation problem becomes a finite matrix recurrence. Let $\lvert C_{n,m}\rvert = a_{n,m}$.
\[
\sum_{n\geq 0}a_{n,m}z^n
\]
is rational, and therefore, the sequence $(a_{n,m})_{n\geq 0}$ eventually satisfies a constant-coefficient linear recurrence.

We then study how this growth rate changes with $m$. For every fixed $m$, we have $\rho_m<4$. Moreover, $\rho_m$ is nondecreasing in $m$ and
\[
\lim_{m\to\infty}\rho_m=4.
\]
Since the ordinary $123$-avoiding permutations have exponential growth rate $4$, this shows exactly what one would expect from relaxing the adjacency condition: every fixed bound reduces the growth rate, but as larger jumps are allowed, the growth approaches that of the full Catalan class.

The adjacency condition also changes the usual symmetry picture for patterns of length three. In the unconstrained setting, all six patterns in $S_3$ are Wilf-equivalent~\cite{SimionSchmidt1985}. Under bounded adjacency, this equivalence breaks: reversal and complement still preserve the condition, while inversion does not in general. Consequently, we obtain two symmetry classes whose enumerations need not agree.

Finally, the adjacency condition has a natural graph interpretation. If $P_n^m$ denotes the $m$th power of the path graph on $[n]$, then a permutation satisfying the adjacency bound is exactly an oriented Hamiltonian path in $P_n^m$. Adding $123$-avoidance restricts the order in which the vertices may be visited. This viewpoint gives another way to understand the class and also leads to algorithmic consequences for counting and generating its elements.

The paper proceeds as follows. We first establish the structural properties forced by the two constraints. We then develop the active-profile description and the finite-state enumeration. After that, we study the resulting growth rates, examine the pattern symmetries that remain under the adjacency condition, and conclude with the graph-theoretic and algorithmic interpretations.

\section{Structural properties and localization}
\label{sec:structure}

We say that
\[
\mathcal C_{n,m}
=
\left\{
\pi\in S_n:
\pi\text{ avoids }123
\text{ and }
|\pi_{i+1}-\pi_i|\le m
\text{ for }1\le i<n
\right\}.
\]

Our eventual goal is to enumerate $\mathcal C_{n,m}$ for fixed $m$ by
building permutations recursively, inserting a new maximum at each step.
For such a recursion to be useful, however, two things must first be true.
Deleting the maximum should take us back to a smaller permutation in the
same class, and the number of ways to insert the next maximum should depend
on only a bounded amount of information.

Neither fact is obvious from the definition. The adjacency condition is
local: it only compares consecutive entries. Pattern avoidance, on the
other hand, is global. The useful feature of this problem is that these two
conditions reinforce one another. Together they force every admissible
permutation to remain close to the decreasing permutation
\[
n,n-1,\ldots,1.
\]
This localization is the main structural fact behind the finite-state
description developed later.

\subsection{Localization around the decreasing permutation}

For the decreasing permutation $\delta_n=n(n-1)\cdots 1$, the entry in
position $i$ is exactly
\[
\delta_n(i)=n+1-i.
\]
The next theorem says that imposing bounded adjacent differences prevents a
$123$-avoiding permutation from moving very far away from this value.

\begin{theorem}[Reverse-diagonal localization]
\label{thm:localization}
For every $\pi\in\mathcal C_{n,m}$ and every $1\le i\le n$,
\[
\left|\pi_i-(n+1-i)\right|\le m.
\]
\end{theorem}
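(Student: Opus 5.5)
The plan is to prove the two inequalities $\pi_i\le n+1-i+m$ and $\pi_i\ge n+1-i-m$ separately. The upper bound will come from a direct counting argument. The lower bound will then follow from a symmetry of $\mathcal C_{n,m}$.

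\emph{Upper bound.} Fix $i$. I count the values exceeding $\pi_i$ and obtain
\[
n-\pi_i \;\ge\; \#\{j<i:\pi_j>\pi_i\} \;=\; (i-1)-k,
\]
where $k=\#\{j<i:\pi_j<\pi_i\}$. It therefore suffices to show $k\le m$, since then $\pi_i\le n+1-i+m$.

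Assume $k\ge 1$ and let $j_1<\dots<j_k<i$ be the positions before $i$ carrying values below $\pi_i$. These values must be strictly decreasing. Otherwise some $j_a<j_b$ with $\pi_{j_a}<\pi_{j_b}<\pi_i$ would form a $123$ together with position $i$. Hence $\pi_{j_1}>\dots>\pi_{j_k}$, and since all are distinct values below $\pi_i$, we get $\pi_{j_k}\le \pi_i-k$.

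Now use adjacency at position $j_k+1$. If $j_k+1=i$, then $\pi_{j_k+1}=\pi_i$. Otherwise $j_k<j_k+1<i$, and by maximality of $j_k$ we have $\pi_{j_k+1}>\pi_i$. In either case
\[
\pi_{j_k+1}-\pi_{j_k}\ge \pi_i-\pi_{j_k}\ge k,
\]
and the adjacency bound gives $k\le m$. This step, converting the decreasing chain of small earlier entries into a single large jump, is the heart of the argument. I expect it to be the only place where some care is needed: one must check that the entry immediately after $j_k$ cannot itself lie below $\pi_i$.

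\emph{Lower bound.} Apply the reverse-complement map $\pi\mapsto\pi^{rc}$, where $\pi^{rc}_i=n+1-\pi_{n+1-i}$. This map preserves $123$-avoidance, since the reverse-complement of $123$ is $123$. It also preserves all adjacent differences in absolute value. So $\pi^{rc}\in\mathcal C_{n,m}$.

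Applying the upper bound to $\pi^{rc}$ at position $n+1-i$ gives
\[
n+1-\pi_i\le n+1-(n+1-i)+m=i+m,
\]
that is, $\pi_i\ge n+1-i-m$. Combining the two bounds yields $|\pi_i-(n+1-i)|\le m$.
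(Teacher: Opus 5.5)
Your proof is correct and is essentially the paper's argument under the reverse-complement symmetry. The paper proves the lower bound directly: the first entry to the right of $\pi_i$ that exceeds it lies within $m$ of $\pi_i$ and caps every later such entry, so at most $m$ values larger than $\pi_i$ lie to its right; it then obtains the upper bound by reverse-complement, whereas you prove the upper bound directly by the mirror-image count of smaller values to the left and obtain the lower bound by reverse-complement, using the same two ingredients ($123$-avoidance with $\pi_i$ as an endpoint of the pattern, and a single adjacent jump across the level $\pi_i$).
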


Before proving the theorem, it is useful to understand why such a statement
should be true. Fix an entry $x=\pi_i$. There are $n-x$ values larger than
$x$ somewhere in the permutation. At most $i-1$ of them can occur to the
left of $x$, simply because there are only $i-1$ positions there. Thus, the important question is how many values larger than $x$ can occur to its right.

The adjacency condition alone does not answer this: a permutation could in
principle climb through several jumps of size at most $m$. The
$123$-avoidance condition prevents exactly that behavior. Once the
permutation makes its first move above $x$, that first larger value becomes
a ceiling for all later values larger than $x$. Otherwise $x$, that first
larger value, and a still larger later value would form a $123$ pattern.
Since the first move above $x$ has size at most $m$, only boundedly many
values larger than $x$ can occur to the right.

\begin{proof}
Fix $i$ and write
\[
x=\pi_i.
\]

Suppose first that there is some entry to the right of $x$ that is larger
than $x$, and let $j>i$ be the first index for which
\[
\pi_j>x.
\]
By the choice of $j$, we have
\[
\pi_{j-1}\le x.
\]
Since adjacent entries differ by at most $m$,
\[
\pi_j-\pi_{j-1}\le m,
\]
and thus
\[
\pi_j-x\le m.
\]

Now consider any $k>j$. We cannot have
\[
\pi_k>\pi_j,
\]
because then
\[
\pi_i=x<\pi_j<\pi_k
\]
with
\[
i<j<k,
\]
which would give a $123$ pattern. Thus every value larger than $x$ that
appears to the right of position $i$ belongs to
\[
\{x+1,x+2,\ldots,\pi_j\}.
\]
There are at most $m$ such values.

If no entry larger than $x$ occurs to the right, the same conclusion is
trivial. Hence, in every case,
\[
\#\{k>i:\pi_k>x\}\le m.
\]

There are at most $i-1$ values larger than $x$ to the left of position $i$.
Since exactly $n-x$ values of $[n]$ are larger than $x$, we obtain
\[
n-x
\le
(i-1)+m.
\]
Rearranging gives
\[
x\ge n+1-i-m.
\]

It remains to obtain the opposite inequality. Define the reverse complement of $\pi$ by
\[
\pi^{rc}_k=n+1-\pi_{n+1-k}.
\]
Reverse-complement preserves $123$-avoidance and also preserves absolute
adjacent differences, so
\[
\pi^{rc}\in\mathcal C_{n,m}.
\]
Applying the inequality just proved to $\pi^{rc}$ at position $n+1-i$
gives
\[
n+1-x\ge i-m.
\]
Equivalently,
\[
x\le n+1-i+m.
\]

Combining the two bounds yields
\[
n+1-i-m
\le
\pi_i
\le
n+1-i+m,
\]
which is exactly
\[
\left|\pi_i-(n+1-i)\right|\le m.
\]
\end{proof}

The theorem has a simple geometric interpretation. If the permutation is
drawn as the set of points
\[
(i,\pi_i),\qquad 1\le i\le n,
\]
then the decreasing permutation lies on the line
\[
y=n+1-x.
\]
Theorem~\ref{thm:localization} says that every point \((i,\pi_i)\) lies in the band
\[
|x+y-(n+1)|\le m.
\]
Equivalently, at each fixed position \(i\), the value \(\pi_i\) differs from the corresponding value of the decreasing permutation by at most \(m\).

Thus the adjacency restriction, although defined only between neighboring
entries, has produced a global restriction on the entire permutation. For the recursive construction, the most useful consequence is that the
largest values are forced to occur near the beginning.

To preface the following Corollary, for \(x\in[n]\), let
\[
\operatorname{pos}_{\pi}(x)
\]
be the position occupied by \(x\) in \(\pi\).

\begin{cor}[Localization of the largest values]
\label{cor:large-values}
Let $\pi\in\mathcal C_{n,m}$ and let $0\le r\le n-1$. Then
\[
\left|
\operatorname{pos}_\pi(n-r)-(r+1)
\right|
\le m.
\]
In particular,
\[
\operatorname{pos}_\pi(n-r)\le r+m+1.
\]
\end{cor}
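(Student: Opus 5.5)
The corollary follows by applying Theorem~\ref{thm:localization} at the position where the value $n-r$ sits. Fix $\pi\in\mathcal C_{n,m}$ and $0\le r\le n-1$, and set $i=\operatorname{pos}_\pi(n-r)$, so that $\pi_i=n-r$. Theorem~\ref{thm:localization} at position $i$ gives
\[
\left|(n-r)-(n+1-i)\right|\le m .
\]
Simplifying the left-hand side, we have $(n-r)-(n+1-i)=i-(r+1)$. Hence
\[
\left|\operatorname{pos}_\pi(n-r)-(r+1)\right|\le m,
\]
which is the main claim. The ``in particular'' statement is the upper half of this two-sided bound: $\operatorname{pos}_\pi(n-r)\le r+1+m$.

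The only point needing care is that the theorem is stated for every position $1\le i\le n$. Since $\pi$ is a permutation of $[n]$ and $1\le n-r\le n$ for $0\le r\le n-1$, the value $n-r$ occurs at exactly one valid position. So the substitution is legitimate, and no extra hypotheses are needed.

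I expect no genuine obstacle here: the corollary is Theorem~\ref{thm:localization} read through the inverse permutation, swapping the roles of position and value.
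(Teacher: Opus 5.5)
Your proposal is correct and is essentially the paper's own proof: set $i=\operatorname{pos}_\pi(n-r)$, apply Theorem~\ref{thm:localization} at position $i$, and simplify $(n-r)-(n+1-i)=i-(r+1)$. The upper bound $\operatorname{pos}_\pi(n-r)\le r+m+1$ is then just one half of the two-sided inequality.
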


\begin{proof}
Let
\[
i=\operatorname{pos}_\pi(n-r),
\]
so that $\pi_i=n-r$. Applying
Theorem~\ref{thm:localization} gives
\[
\left|
(n-r)-(n+1-i)
\right|
\le m.
\]
The expression inside the absolute value simplifies to
\[
i-(r+1),
\]
and hence
\[
\left|i-(r+1)\right|\le m.
\]
\end{proof}

For example, the maximum $n$ must occur among the first $m+1$ positions, $n-1$ must occur among the first $m+2$ positions, and so on. In the decreasing permutation, these values occur in positions $1,2,\ldots$, respectively; Corollary~\ref{cor:large-values} says that an admissible permutation can move them only a bounded distance away. This is the first indication that the behavior relevant to inserting future maxima should be concentrated near the beginning of the permutation.

\subsection{Deleting the maximum}

Localization tells us that the largest values are confined to a small part of the permutation. To turn this observation into a recursive enumeration, we next need to know that removing the current maximum does not take us out of the class. Pattern avoidance causes no difficulty: deleting an entry cannot create a new $123$ pattern. The only possible problem is the adjacency condition. When the maximum is deleted, its two former neighbors become adjacent, and we must check that this new pair still differs by at most $m$.

\begin{lemma}[Closure under maximum deletion]
\label{lem:max-deletion}
Let
\[
\pi\in\mathcal C_{n+1,m}.
\]
Delete the entry $n+1$. The resulting permutation belongs to
$\mathcal C_{n,m}$.
\end{lemma}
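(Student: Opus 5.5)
Deleting $n+1$ removes one entry, so any $123$ pattern in the shorter word would already be a $123$ pattern in $\pi$; thus $123$-avoidance is inherited. The adjacency condition holds automatically for every pair of entries that were already adjacent in $\pi$. If $n+1$ sits in position $1$ or $n+1$, no new adjacent pair is created and we are done. So the only case needing work is when $n+1=\pi_k$ with $1<k<n+1$. Write $a=\pi_{k-1}$ and $b=\pi_{k+1}$; these become adjacent after deletion, and we must show $|a-b|\le m$.

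First I would use $123$-avoidance to compare $a$ and $b$. If $a<b$, then $a$, $n+1$, $b$ do not form a $123$, since $n+1$ is not below $b$. So I cannot conclude $a>b$ directly this way. Instead I would use the adjacency bounds $n+1-a\le m$ and $n+1-b\le m$, which give $a,b\ge n+1-m$. Both $a$ and $b$ then lie in the interval $[n+1-m,\,n]$, which has exactly $m$ elements. Hence $|a-b|\le m-1\le m$.

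This already settles the lemma, and it uses only the adjacency constraint at the maximum. There is no real obstacle here. The one point to state carefully is that both neighbours lie within $m$ of $n+1$ and are distinct from it, so their difference is at most $m-1$. I would also note explicitly why pattern avoidance survives deletion: every occurrence in the shorter word is an occurrence in $\pi$. Theorem~\ref{thm:localization} is not needed.
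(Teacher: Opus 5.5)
Your proof is correct and is essentially the paper's argument: $123$-avoidance is inherited under deletion, placing $n+1$ at an end creates no new adjacency, and an interior maximum forces both neighbours into the $m$-element set $\{n+1-m,\ldots,n\}$, so $|a-b|\le m-1$. The aside where you try to compare $a$ and $b$ using pattern avoidance does nothing and can be dropped.
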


\begin{proof}
Deleting an entry cannot create a new occurrence of $123$, so the resulting
permutation still avoids $123$.

It remains only to check the adjacency condition. If $n+1$ occurs at one
of the two ends of $\pi$, no new adjacency is created, so there is nothing
to prove.

Suppose instead that $\pi$ contains
\[
\cdots,a,n+1,b,\cdots.
\]
Because $\pi\in\mathcal C_{n+1,m}$,
\[
n+1-a\le m
\qquad\text{and}\qquad
n+1-b\le m.
\]
Hence
\[
a,b\in
\{n+1-m,n+2-m,\ldots,n\}.
\]
Any two values in this set differ by at most $m-1$, so
\[
|a-b|\le m-1\le m.
\]
Thus, after deleting $n+1$, the newly adjacent pair $a,b$ still satisfies the adjacency restriction.
\end{proof}

Lemma~\ref{lem:max-deletion} gives every permutation in $\mathcal C_{n+1,m}$ a unique parent in $\mathcal C_{n,m}$: simply delete its maximum. Accordingly, we may reverse the operation and try to construct all permutations of size $n+1$ by inserting $n+1$ into permutations of size $n$. The remaining question is central for the intended recursive description: \emph{At which sites may the new maximum be inserted?}

\subsection{Active insertion sites}

A standard way to enumerate pattern-avoiding permutations recursively is to insert the new maximum into designated active sites, yielding a generating tree~\cite{West1995}.

Let
\[
\pi=\pi_1\pi_2\cdots\pi_n\in\mathcal C_{n,m}.
\]
Write $s_j$ for the site immediately after the first $j$ entries, so that $s_0$ is the site before $\pi_1$. Let $I_j(\pi)$ be the permutation obtained by inserting $n+1$ at $s_j$. We call $s_j$ \emph{active} if
\[
I_j(\pi)\in\mathcal C_{n+1,m}.
\]

There are again two separate conditions to check. First, inserting a new maximum can create a $123$ pattern only in one way: the new maximum can become the final and largest member of the pattern. Thus any increasing pair appearing before the insertion site immediately makes that site illegal.

\begin{lemma}[Pattern condition for insertion]
\label{lem:decreasing-prefix}
For $j\ge 1$, inserting $n+1$ at $s_j$ preserves $123$ avoidance if and only if
\[
\pi_1>\pi_2>\cdots>\pi_j.
\]
\end{lemma}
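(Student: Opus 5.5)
We prove both directions directly, using only the fact stated just before the lemma: inserting a new maximum can create a $123$ pattern only with $n+1$ as the largest, final entry. Let $\sigma=I_j(\pi)$. Since $\pi$ avoids $123$ and $\sigma$ with the entry $n+1$ deleted is exactly $\pi$, any occurrence of $123$ in $\sigma$ must use the entry $n+1$. Because $n+1$ exceeds every other value, it can play only the role of the ``$3$''. So an occurrence has the form $\sigma_a<\sigma_b<n+1$ with $a<b<\operatorname{pos}_\sigma(n+1)$.

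The positions of $\sigma$ before $n+1$ are exactly the first $j$ positions, and they hold $\pi_1,\dots,\pi_j$ in order. Hence $\sigma$ contains $123$ if and only if there are indices $1\le a<b\le j$ with $\pi_a<\pi_b$. In other words, $\sigma$ contains $123$ if and only if $\pi_1\pi_2\cdots\pi_j$ is not strictly decreasing.

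This gives the two directions as a contrapositive pair. If the prefix is decreasing, no increasing pair lies before the insertion site, so no $123$ occurrence can exist. If the prefix is not decreasing, then some pair $a<b\le j$ has $\pi_a<\pi_b$, since a sequence of distinct values that is not strictly decreasing has an ascent. The triple $\pi_a,\pi_b,n+1$ is then an explicit $123$ occurrence.

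No step here is a real obstacle. The only point needing care is the reduction to occurrences that involve $n+1$, which rests on the hypothesis that $\pi$ already avoids $123$ and that deleting $n+1$ recovers $\pi$. The restriction $j\ge1$ is only cosmetic, since $s_0$ trivially preserves avoidance.
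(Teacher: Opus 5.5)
Your proof is correct and follows essentially the same argument as the paper. Since $\pi$ already avoids $123$, any new occurrence must use $n+1$ as its final, largest entry, so an occurrence exists exactly when $\pi_1,\ldots,\pi_j$ contain an increasing pair.
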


\begin{proof}
Suppose first that the prefix is not decreasing. Then there exist indices
\[
1\le a<b\le j
\]
such that
\[
\pi_a<\pi_b.
\]
After inserting $n+1$ at $s_j$, these entries occur before the new maximum,
so
\[
\pi_a<\pi_b<n+1
\]
forms a $123$ pattern.

Conversely, suppose
\[
\pi_1>\pi_2>\cdots>\pi_j.
\]
The original permutation already avoids $123$. Thus any new $123$ pattern would have to use the newly inserted value $n+1$. Since $n+1$ is the largest value, it would have to be the third member of such a pattern, which would require an increasing pair among the first $j$ entries. No such pair exists because the prefix is decreasing.
\end{proof}

For the remainder of this subsection, assume that \(n\ge m+1\).

The adjacency condition is even more direct. An old entry $x$ may sit next to the new maximum $n+1$ precisely when
\[
n+1-x\le m,
\]
or equivalently
\[
x\ge n+1-m.
\]
This motivates the following notation:
\[
H_{n,m}
=
\{n+1-m,n+2-m,\ldots,n\}.
\]
We will call the elements of $H_{n,m}$ the \emph{high values}. They are exactly the $m$ entries that are large enough to be adjacent to $n+1$.

Thus an active insertion site must satisfy two simple requirements:

\begin{enumerate}
    \item the entries before the site must form a decreasing prefix;
    \item the entries immediately adjacent to the new maximum must be high.
\end{enumerate}

These two requirements allow us to describe all active sites at once.

Define
\[
d(\pi)
=
\max\left\{
t:
\pi_1>\pi_2>\cdots>\pi_t
\right\},
\]
the length of the initial decreasing run, and define
\[
h(\pi)
=
\max\left\{
t:
\pi_1,\ldots,\pi_t\in H_{n,m}
\right\},
\]
with $h(\pi)=0$ when $\pi_1\notin H_{n,m}$.

For the sake of clarity, let
\[
\operatorname{Act}(\pi)
\]
denote the set of active sites of \(\pi\).

\begin{prop}[Structure of the active sites]
\label{prop:active-sites}
Assume $n\ge m+1$.

If $h(\pi)=0$, then $\pi$ has no active sites. If $h(\pi)\ge1$, then
\[
\operatorname{Act}(\pi)
=
\{s_0,s_1,\ldots,s_r\},
\qquad
r=
\min\{d(\pi),h(\pi)-1\}.
\]
In particular, the active sites always form an initial segment.
\end{prop}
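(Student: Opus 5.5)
The plan is to test each site $s_j$ ($0\le j\le n$) directly against the two requirements listed just before the proposition: Lemma~\ref{lem:decreasing-prefix} for the pattern condition, and membership in $H_{n,m}$ for each neighbor of $n+1$. Two facts are used throughout. First, $|H_{n,m}|=m$, so $h(\pi)\le m<n$ because $n\ge m+1$. Second, a site $s_j$ with $j\ge 1$ has both a left neighbor $\pi_j$ and, when $j<n$, a right neighbor $\pi_{j+1}$; both must be high.

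\textbf{Extreme sites.} At $s_0$ there is no prefix, so the pattern condition is vacuous. The only neighbor is $\pi_1$, so $s_0$ is active if and only if $\pi_1\in H_{n,m}$, that is, if and only if $h(\pi)\ge 1$.

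At $s_n$, Lemma~\ref{lem:decreasing-prefix} requires $\pi$ to be entirely decreasing, which forces $\pi_n=1$. Adjacency then requires $1\in H_{n,m}$, which means $n+1-m\le 1$, contradicting $n\ge m+1$. Hence $s_n$ is never active. This is consistent with the claim, since $r\le h(\pi)-1\le m-1<n$.

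\textbf{Interior sites $1\le j\le n-1$.} By Lemma~\ref{lem:decreasing-prefix} and the adjacency requirement, $s_j$ is active if and only if $j\le d(\pi)$ and $\pi_j,\pi_{j+1}\in H_{n,m}$. I will show this is equivalent to $j\le d(\pi)$ together with $j+1\le h(\pi)$.

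For the forward direction, the key observation is that the prefix is decreasing, so $\pi_1>\cdots>\pi_j\ge n+1-m$. Every entry of the prefix therefore lies in $H_{n,m}$, and so does $\pi_{j+1}$. This means $\pi_1,\ldots,\pi_{j+1}$ are all high, giving $h(\pi)\ge j+1$.

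For the converse, $j+1\le h(\pi)$ directly makes $\pi_j$ and $\pi_{j+1}$ high. It also gives $j+1\le m<n+1$, so $s_j$ is a genuine interior site.

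\textbf{Assembling the cases.} If $h(\pi)=0$, then $s_0$ fails and no $j\ge1$ can satisfy $j+1\le h(\pi)$, so there are no active sites. If $h(\pi)\ge1$, the active sites are exactly $s_0$ together with those $s_j$ for which $1\le j\le\min\{d(\pi),h(\pi)-1\}$. This set is $\{s_0,\ldots,s_r\}$, an initial segment.

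The only genuinely non-mechanical step is the forward implication for interior sites. A naive reading suggests that only the two neighbors need to be high. Converting this into the statement "$h(\pi)\ge j+1$" depends on the decreasing prefix, which propagates highness from $\pi_j$ back to $\pi_1$. The other point to handle carefully is excluding $s_n$, which is where the hypothesis $n\ge m+1$ is needed.
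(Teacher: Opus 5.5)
Your proposal is correct and follows essentially the same route as the paper. You check each site against Lemma~\ref{lem:decreasing-prefix} and the high-value adjacency requirement, use the decreasing prefix to propagate highness from $\pi_j$ back to $\pi_1$, and exclude $s_n$ via $\pi_n=1$ together with $n\ge m+1$; the only cosmetic difference is that you read off the $h(\pi)=0$ case from the general interior-site equivalence, whereas the paper argues that case separately.
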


\begin{proof}
Suppose first that \(h(\pi)=0\). Then \(\pi_1\notin H_{n,m}\), so front insertion at \(s_0\) fails the adjacency condition.

Now consider a site \(s_j\) with \(1\le j<n\). If
\[
\pi_1,\ldots,\pi_j
\]
is not decreasing, then \(s_j\) is inactive by Lemma~\ref{lem:decreasing-prefix}. If this prefix is decreasing, then
\[
\pi_j\le \pi_1<n+1-m,
\]
so \(\pi_j\notin H_{n,m}\). Since \(\pi_j\) would be adjacent to the new maximum \(n+1\), the adjacency condition fails. Thus no such \(s_j\) is active.

It remains only to consider the terminal site \(s_n\). If the entire permutation is not decreasing, then \(s_n\) is inactive by Lemma~\ref{lem:decreasing-prefix}. If the entire permutation is decreasing, then \(\pi_n=1\), and hence
\[
|(n+1)-\pi_n|=n>m,
\]
because \(n\ge m+1\). Thus \(s_n\) also fails the adjacency condition. Therefore, if \(h(\pi)=0\), the permutation has no active sites.

Now suppose that \(h(\pi)\ge 1\). Then \(\pi_1\in H_{n,m}\), so front insertion at \(s_0\) satisfies the adjacency condition. Since nothing precedes the new maximum, it also cannot create a new \(123\) pattern. Hence \(s_0\) is active.

Consider an internal site \(s_j\) with \(1\le j<n\). By Lemma~\ref{lem:decreasing-prefix}, the pattern condition is satisfied if and only if
\[
j\le d(\pi).
\]
Assume this condition holds. Then
\[
\pi_1>\pi_2>\cdots>\pi_j.
\]
The adjacency condition at \(s_j\) requires both \(\pi_j\) and \(\pi_{j+1}\) to lie in \(H_{n,m}\). Under the decreasing-prefix condition, this is equivalent to
\[
j+1\le h(\pi).
\]
Indeed, if \(j+1\le h(\pi)\), then both neighboring entries are high. Conversely, if \(\pi_j\) and \(\pi_{j+1}\) are high, then \(\pi_1,\ldots,\pi_j\) are all high as well, since they are larger than or equal to \(\pi_j\). Hence the first \(j+1\) entries are high.

Therefore an internal site \(s_j\) is active exactly when
\[
j\le d(\pi)
\qquad\text{and}\qquad
j\le h(\pi)-1.
\]
Equivalently,
\[
j\le \min\{d(\pi),h(\pi)-1\}.
\]

Finally, the terminal site \(s_n\) is inactive by the same argument used above: either the entire permutation is not decreasing, in which case Lemma~\ref{lem:decreasing-prefix} applies, or it is decreasing, in which case \(\pi_n=1\) is too far from \(n+1\).

Thus, writing
\[
r=\min\{d(\pi),h(\pi)-1\},
\]
we obtain
\[
\operatorname{Act}(\pi)=\{s_0,s_1,\ldots,s_r\}.
\]
\end{proof}

There is a useful way to read Proposition~\ref{prop:active-sites}. As we move from left to right through the possible insertion sites, only two things can stop the active region. One possibility is that an ascent appears in the prefix. Once this happens, inserting the new maximum anywhere later would complete that increasing pair to a \(123\) pattern. The other possibility is that, while the prefix is still decreasing, we encounter an entry outside \(H_{n,m}\). Every later entry in that decreasing prefix is then even smaller and hence also non-high. If the prefix later ceases to decrease, the first obstruction takes over. This is why active sites cannot occur sporadically: they must form one initial block. Since \(H_{n,m}\) contains only \(m\) values, this immediately gives the boundedness that makes the recursive construction finite.

\begin{cor}[Bounded number of active sites]
\label{cor:bounded-active}
For every $\pi\in\mathcal C_{n,m}$ with $n\ge m+1$,
\[
|\operatorname{Act}(\pi)|\le m.
\]
\end{cor}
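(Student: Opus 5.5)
The plan is to read the bound directly off Proposition~\ref{prop:active-sites}, splitting into the two cases that proposition distinguishes. If $h(\pi)=0$, the proposition says $\pi$ has no active sites, so $|\operatorname{Act}(\pi)|=0\le m$ and this case is done.

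If $h(\pi)\ge 1$, the proposition gives $\operatorname{Act}(\pi)=\{s_0,s_1,\ldots,s_r\}$ with $r=\min\{d(\pi),h(\pi)-1\}$. Hence $|\operatorname{Act}(\pi)|=r+1$. I will not use the $d(\pi)$ term at all; it suffices to note that
\[
r+1\le (h(\pi)-1)+1=h(\pi).
\]

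The remaining step, and the only one with any content, is to show $h(\pi)\le m$. By definition, $h(\pi)$ counts an initial run $\pi_1,\ldots,\pi_{h(\pi)}$ of entries all lying in $H_{n,m}=\{n+1-m,\ldots,n\}$. Since $\pi$ is a permutation, these entries are distinct. The hypothesis $n\ge m+1$ guarantees $n+1-m\ge 2$, so $H_{n,m}$ is a genuine subset of $[n]$ with exactly $m$ elements. Therefore at most $m$ distinct entries can lie in $H_{n,m}$, which gives $h(\pi)\le m$.

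Combining the two steps yields $|\operatorname{Act}(\pi)|\le h(\pi)\le m$ in this case as well. I expect no real obstacle. The only point to state explicitly is the role of $n\ge m+1$: it is already needed to invoke Proposition~\ref{prop:active-sites}, and it ensures that $H_{n,m}$ has exactly $m$ elements inside $[n]$.
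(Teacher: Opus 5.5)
Your proposal is correct and matches the paper's proof: dispose of the case with no active sites, then use Proposition~\ref{prop:active-sites} to get $|\operatorname{Act}(\pi)|=r+1\le h(\pi)$. Conclude with $h(\pi)\le m$, since $h(\pi)$ counts distinct entries drawn from the $m$-element set $H_{n,m}$.
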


\begin{proof}
If there are no active sites, the claim is immediate. Otherwise, Proposition~\ref{prop:active-sites} gives
\[
|\operatorname{Act}(\pi)|
=
r+1
\le h(\pi).
\]
But $h(\pi)$ counts an initial sequence of distinct values drawn from the $m$-element set $H_{n,m}$, so
\[
h(\pi)\le m.
\]
\end{proof}

We have now reached the main structural conclusion needed for enumeration: every admissible permutation has a unique parent obtained by deleting its maximum, while every parent has at most $m$ legal ways to produce a child. Significantly, those legal insertions are controlled entirely by a short initial segment consisting of high entries and by whether that segment remains decreasing. Hence, the rest of the permutation does not need to be remembered in full.

What remains is to determine exactly what information from this initial segment must be retained in order to predict the active sites of the next generation. Since the relevant entries all lie near the current maximum, the natural quantities to record are not their absolute values but their \emph{deficits from the maximum}. This leads to the active-profile state description of the next section.

\section{Active sites and finite-state enumeration}
\label{sec:finite-state}

The previous section reduced the insertion problem to a bounded initial portion of the permutation. Every permutation in $\mathcal C_{n,m}$ has a unique parent obtained by deleting its maximum, and its legal insertion sites form an initial segment whose length is at most $m$. This already suggests that, for fixed $m$, the full permutation contains far more information than is needed to determine its future extensions. The next task is to identify exactly what must be remembered.

There are two requirements for a useful state description. First, it should contain enough information to determine the active sites after the next maximum is inserted. Second, its possible values should depend only on $m$, not on the permutation length $n$. Once such a description is found, the enumeration problem becomes finite: permutations can be grouped according to state, and insertion of successive maxima becomes a transition between a fixed collection of states.

Throughout this section, unless stated otherwise, we assume that $n \ge m+1$.

\subsection{Active profiles}
\label{subsec:active-profiles}

Suppose that
\[
\pi=\pi_1\pi_2\cdots\pi_n\in\mathcal C_{n,m}
\]
has $\ell$ active sites. By the structure established in the previous
section,
\[
\operatorname{Act}(\pi)
=
\{s_0,s_1,\ldots,s_{\ell-1}\}.
\]
As a result, only the initial entries
\[
\pi_1,\ldots,\pi_\ell
\]
can influence where the next maximum may be inserted.

One could attempt to use these entries themselves as a state. That would be awkward, however, because their values change with $n$. For instance, the entry $n-2$ represents the same position relative to the maximum regardless of whether the permutation has length $20$ or $200$. What matters is not its absolute value but the fact that it lies two below the maximum. This motivates measuring each relevant entry by its \emph{deficit} from the current maximum.

\begin{definition}[Active profile]
\label{def:active-profile}
Let $\pi\in\mathcal C_{n,m}$ have $\ell\ge1$ active sites. For
$1\le i\le\ell$, define
\[
d_i=n-\pi_i.
\]
The \emph{active profile} of $\pi$ is
\[
\lambda_m(\pi)
=
(d_1,\ldots,d_\ell).
\]

If $\pi$ has no active sites, we write
\[
\lambda_m(\pi)=\dagger,
\]
and call $\dagger$ the \emph{dead state}.
\end{definition}

The deficit coordinates have a direct interpretation. A deficit of $0$ corresponds to the current maximum, a deficit of $1$ to the value immediately below it, and so on. Since every entry participating in an active site must belong to the high set
\[
H_{n,m}
=
\{n+1-m,\ldots,n\},
\]
every recorded deficit lies in
\[
\{0,1,\ldots,m-1\}.
\]

The profile also remembers the order in which these high values appear. This is essential: pattern avoidance does not merely care which high values occur near the beginning, but whether those values form an ascent. 

There is one slight asymmetry in the profile that is worth making explicit. If there are $\ell$ active sites, then
\[
s_0,s_1,\ldots,s_{\ell-1}
\]
are active. In particular, the first $\ell-1$ entries lie strictly before the final active site $s_{\ell-1}$, so they must form a decreasing prefix:
\[
\pi_1>\pi_2>\cdots>\pi_{\ell-1}.
\]
In deficit coordinates this becomes
\[
d_1<d_2<\cdots<d_{\ell-1}.
\]
The final entry $\pi_\ell$, however, lies immediately to the right of $s_{\ell-1}$. It must still be high enough to sit next to a new maximum, but it does not need to continue the decreasing run. Thus, $d_\ell$ is not initially forced to be larger than $d_{\ell-1}$.

\begin{definition}[Candidate profile]
\label{def:candidate-profile}
Fix $m\ge1$. A \emph{candidate profile} of length $\ell$, where
$1\le\ell\le m$, is a tuple
\[
\omega=(d_1,\ldots,d_\ell)
\]
such that
\[
d_i\in\{0,1,\ldots,m-1\},
\]
the $d_i$ are distinct, and
\[
d_1<d_2<\cdots<d_{\ell-1}.
\]

A candidate profile is called \emph{realizable} if
\[
\omega=\lambda_m(\pi)
\]
for some $\pi\in\mathcal C_{n,m}$ and some $n$.
\end{definition}

At this stage we have compressed the relevant part of a permutation from its actual values to a tuple of at most $m$ integers chosen from
$\{0,\ldots,m-1\}$. The next question is whether this tuple really contains enough information. In other words, if two permutations have the same active profile, do they necessarily behave in the same way when the next maximum is inserted? To answer that, we first determine how deficits change under insertion.

\subsection{Profile evolution under maximum insertion}
\label{subsec:profile-evolution}

Suppose the current permutation has length $n$, and we insert the new maximum $n+1$. Two simple things happen to the deficit coordinates.

First, the newly inserted value $n+1$ has deficit
\[
(n+1)-(n+1)=0.
\]
Second, if an old entry had deficit
\[
d=n-\pi_i,
\]
then its deficit from the new maximum is
\[
(n+1)-\pi_i=d+1.
\]

Thus insertion has an especially simple effect in deficit coordinates:
\[
\text{old deficits increase by }1,
\qquad
\text{the new maximum contributes }0.
\]

There is only one complication. An old deficit equal to $m-1$ becomes $m$. Such an entry is no longer high enough to be adjacent to the \emph{next} maximum. Since active sites always form an initial segment, once the first such obstruction is reached, no later entry can remain relevant. We encode this by a trimming operation.

\begin{definition}[Trimming map]
\label{def:trim}
Let
\[
x=(x_1,\ldots,x_r)
\]
be a finite ordered list with entries in
\[
\{0,1,\ldots,m\}.
\]
If $m$ occurs in the list, let
\[
q=\min\{i:x_i=m\}
\]
be the position of its first occurrence. If $m$ does not occur, set $q=r+1$.

Define
\[
\operatorname{trim}_m(x_1,\ldots,x_r)
=
(x_1,\ldots,x_{q-1}).
\]
If $q=1$, so that the resulting list is empty, we interpret the output as the dead state $\dagger$.
\end{definition}

The meaning of $\operatorname{trim}_m$ is simple. As the maximum increases, old high entries become one unit farther from it. The first entry whose deficit reaches $m$ has become too small to support another maximum insertion. Because active sites form an initial segment, that entry and everything after it may be discarded from the state. We can now write the child profile produced by each active insertion site.

\begin{definition}[Profile transition maps]
\label{def:phi}
Let
\[
\omega=(d_1,\ldots,d_\ell)
\]
be a nondead candidate profile.

For front insertion at $s_0$, define
\[
\Phi_{m,0}(\omega)
=
\operatorname{trim}_m
\bigl(
0,d_1+1,\ldots,d_\ell+1
\bigr).
\]

For $1\le j\le\ell-1$, define
\[
\Phi_{m,j}(\omega)
=
\operatorname{trim}_m
\bigl(
d_1+1,\ldots,d_j+1,0
\bigr).
\]
\end{definition}

The front-insertion formula follows directly from the deficit update: $n+1$ becomes the first entry and contributes $0$, while every old deficit increases by $1$. The internal-insertion formula deserves one additional comment, however. If $n+1$ is inserted after $\pi_j$, then
\[
\pi_j<n+1
\]
is an ascent. Any insertion site strictly after the newly inserted maximum would thus have a prefix containing this ascent. Such a site cannot be active, because inserting $n+2$ there would complete the ascent to a $123$ pattern. Hence nothing after the new maximum needs to be retained in the child's profile.

The following proposition is the point at which the state description becomes useful.

\begin{prop}[Profile sufficiency]
\label{prop:profile-sufficiency}
Let $\pi\in\mathcal C_{n,m}$ have nondead active profile
\[
\lambda_m(\pi)=\omega=(d_1,\ldots,d_\ell).
\]
For every active-site index
\[
0\le j\le\ell-1,
\]
we have
\[
\lambda_m(I_j(\pi))
=
\Phi_{m,j}(\omega).
\]

Consequently, if two permutations have the same active profile, then insertion at correspondingly indexed active sites produces children with the same active profiles.
\end{prop}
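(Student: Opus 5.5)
The plan is to compute the active profile of the child $\sigma=I_j(\pi)$ directly from Proposition~\ref{prop:active-sites}, applied at length $n+1$. This is legitimate because $n+1\ge m+1$ and $\sigma\in\mathcal C_{n+1,m}$ by definition of an active site.

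By that proposition, if $h(\sigma)\ge1$ then $\sigma$ has
\[
\ell'=\min\{d(\sigma),h(\sigma)-1\}+1=\min\{d(\sigma)+1,\,h(\sigma)\}
\]
active sites, and its profile consists of the deficits of $\sigma_1,\dots,\sigma_{\ell'}$. If $h(\sigma)=0$, then $\sigma$ is dead.

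The high set for $\sigma$ is $H_{n+1,m}$. In deficits measured from $n+1$, an entry is high exactly when its deficit is at most $m-1$. As noted before Definition~\ref{def:trim}, old entries have their deficit increased by one and $n+1$ has deficit $0$. Since every recorded $d_i$ lies in $\{0,\dots,m-1\}$, the new deficits of the entries appearing in the profile lie in $\{0,\dots,m\}$. So for those entries, ``not high'' means exactly ``deficit equal to $m$.'' Consequently, $h(\sigma)$ restricted to the listed entries is exactly the cut made by $\operatorname{trim}_m$. The proof then reduces to two checks: the bound $d(\sigma)+1$ never cuts earlier than the trimmed length, and the profile of $\sigma$ never extends beyond the list fed to $\operatorname{trim}_m$.

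\textbf{Internal insertion, $1\le j\le\ell-1$.} Here $\sigma=\pi_1\cdots\pi_j\,(n+1)\cdots$. Since $j\le\ell-1\le d(\pi)$, the prefix $\pi_1>\dots>\pi_j$ is decreasing, and $\pi_j<n+1$ is an ascent. Hence $d(\sigma)=j$ and $\ell'\le j+1$. This shows $\sigma$'s profile lives inside the list $(d_1+1,\dots,d_j+1,0)$. The factor $d(\sigma)+1=j+1$ equals the full length of that list, so $\ell'$ is determined by $h(\sigma)$ alone, which is the trim cut. If $d_1+1=m$, both sides give $\dagger$.

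\textbf{Front insertion, $j=0$.} Here $\sigma=(n+1)\pi_1\pi_2\cdots$. Since $n+1>\pi_1$, we have $d(\sigma)=d(\pi)+1$. Because $\ell=\min\{d(\pi)+1,h(\pi)\}$, this gives $d(\sigma)\ge\ell$, so $d(\sigma)+1\ge\ell+1$ equals the length of $(0,d_1+1,\dots,d_\ell+1)$ and imposes no earlier cut. The step I expect to be the main obstacle is showing $\ell'\le\ell+1$, i.e.\ that entries $\pi_{\ell+1},\dots$ never re-enter the profile. I would split by which term achieves the minimum defining $\ell$:
\begin{itemize}
\item If $\ell=h(\pi)$, then $\pi_{\ell+1}\notin H_{n,m}$, so its deficit in $\pi$ is at least $m$ and in $\sigma$ at least $m+1$. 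Hence $h(\sigma)\le\ell+1$.
\item If $\ell=d(\pi)+1$, then $d(\sigma)=\ell$, so $\ell'\le d(\sigma)+1=\ell+1$.
\end{itemize}
In either case $\ell'$ equals the trimmed length, and the recorded deficits agree entrywise. Again, $\sigma$ is dead exactly when the trim output is empty, which cannot occur here since the first entry $0$ is never $m$.

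The final sentence of the proposition is then immediate, since $\Phi_{m,j}(\omega)$ depends only on $\omega$ and $j$.
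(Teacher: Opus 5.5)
Your proof is correct and follows essentially the same argument as the paper: the paper's front-insertion claim that ``the old obstruction that ended the parent's active region remains an obstruction'' is exactly your case split on which term attains $\ell=\min\{d(\pi)+1,h(\pi)\}$, and its internal-insertion step, that the ascent $\pi_j<n+1$ blocks all later sites, is your observation $d(\sigma)=j$. You make the bookkeeping explicit by routing everything through the active-site count $\min\{d(\sigma)+1,h(\sigma)\}$ from Proposition~\ref{prop:active-sites} at length $n+1$, which is, if anything, more rigorous than the paper's prose.
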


\begin{proof}
Let
\[
\tau=I_j(\pi)
\]
be the permutation obtained by inserting $n+1$.

We must determine the active sites available for inserting the next maximum $n+2$. An entry of $\tau$ can be adjacent to $n+2$ precisely when its deficit from the current maximum $n+1$ is at most $m-1$. Thus a transformed deficit equal to $m$ is exactly the first possible obstruction coming from the adjacency condition.

Suppose first that $j=0$. Then
\[
\tau=(n+1),\pi_1,\pi_2,\ldots,
\]
so the relevant transformed deficits begin
\[
0,d_1+1,d_2+1,\ldots,d_\ell+1.
\]
Until the first occurrence of $m$, all recorded entries remain high enough to sit beside $n+2$. Moreover,
\[
d_1<\cdots<d_{\ell-1}
\]
implies
\[
0<d_1+1<\cdots<d_{\ell-1}+1,
\]
so the corresponding values form a decreasing initial segment.

Hence, all sites represented before the first transformed deficit $m$ remain active. At the first deficit $m$, the adjacency condition fails, and since active sites form an initial segment, all later sites are inactive. If no transformed deficit equals $m$, then the old obstruction that ended the parent's active region remains an obstruction after front insertion: an old ascent is not removed, and an old entry that was too small relative to $n$ remains too small relative to $n+1$.

Hence
\[
\lambda_m(\tau)
=
\operatorname{trim}_m
\bigl(
0,d_1+1,\ldots,d_\ell+1
\bigr)
=
\Phi_{m,0}(\omega).
\]

Now suppose $1\le j\le\ell-1$. The relevant transformed deficits are
\[
d_1+1,\ldots,d_j+1,0.
\]
As before, the sites remain active until the first transformed deficit equal to $m$. If such a deficit occurs, the adjacency condition fails there and all later sites are inactive.

If no transformed deficit equals $m$, then the site immediately before the new maximum remains active. No site strictly after $n+1$ can be active, however, because every such prefix contains the ascent
\[
\pi_j<n+1.
\]
Thus
\[
\lambda_m(\tau)
=
\operatorname{trim}_m
\bigl(
d_1+1,\ldots,d_j+1,0
\bigr)
=
\Phi_{m,j}(\omega).
\]

In either case, the child's profile is determined entirely by the parent's profile and the chosen active-site index.
\end{proof}

Proposition~\ref{prop:profile-sufficiency} is the main reason for introducing active profiles. The full permutation is no longer needed to predict future extensions. Once $\lambda_m(\pi)$ is known, every possible child state is known. In this sense, the active profile contains all of the information relevant to future maximum insertions. There is still one issue. Definition~\ref{def:candidate-profile} described all tuples having the obvious local shape of an active profile, but some of those tuples may never occur in an actual permutation. Before constructing the finite-state system, we determine exactly which candidate profiles are realizable.

\subsection{Realizable profiles}
\label{subsec:realizable-profiles}

The only possible failure of monotonicity in a candidate profile occurs at its final coordinate. Thus a candidate profile has one of two shapes: either
\[
d_1<d_2<\cdots<d_\ell,
\]
or
\[
d_1<\cdots<d_{\ell-1}
\qquad\text{and}\qquad
d_\ell<d_{\ell-1}.
\]

The first case corresponds to a fully decreasing recorded segment in the original permutation. The second corresponds to a recorded segment ending in an ascent.

The second case is much more restrictive than it first appears. Suppose
\[
d_\ell<d_{\ell-1}.
\]
Then in terms of values,
\[
\pi_{\ell-1}
=
n-d_{\ell-1}
<
n-d_\ell
=
\pi_\ell.
\]
We already have an ascent. To avoid creating a $123$ pattern, every value larger than $\pi_\ell$ must have appeared earlier. In deficit language, this means every deficit smaller than $d_\ell$ must already have occurred. The natural way to express this condition is through the smallest missing deficit.

\begin{definition}[Minimum excluded value]
\label{def:mex}
For a finite set
\[
S\subseteq\mathbb Z_{\ge0},
\]
define
\[
\operatorname{mex}(S)
=
\min\bigl(\mathbb Z_{\ge0}\setminus S\bigr).
\]
\end{definition}

\begin{theorem}[Characterization of realizable profiles]
\label{thm:realizable-profiles}
Let
\[
\omega=(d_1,\ldots,d_\ell)
\]
be a candidate profile, where $1\le\ell\le m$.

Then $\omega$ is realizable if and only if one of the following holds:

\begin{enumerate}
    \item
    \[
    d_1<d_2<\cdots<d_\ell;
    \]

    \item $\ell\ge2$,
    \[
    d_\ell<d_{\ell-1},
    \]
    and
    \[
    d_\ell
    =
    \operatorname{mex}
    \{d_1,\ldots,d_{\ell-1}\}.
    \]
\end{enumerate}

Moreover, every realizable profile already occurs in
$\mathcal C_{m+1,m}$.
\end{theorem}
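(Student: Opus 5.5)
The plan is to prove the two directions separately and, for sufficiency, to build every permutation inside $\mathcal C_{m+1,m}$. That also yields the ``moreover'' clause. The key simplification is that when $n=m+1$, any two values of $[m+1]$ differ by at most $m$. So every permutation of $[m+1]$ satisfies the adjacency bound, and only $123$-avoidance and the exact active profile need checking. Also, $H_{m+1,m}=\{2,\ldots,m+1\}$, so the only non-high value is $1$, whose deficit is $m$.

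\emph{Necessity.} Let $\omega=\lambda_m(\pi)$ be a realizable candidate profile. By Definition~\ref{def:candidate-profile} we already have $d_1<\cdots<d_{\ell-1}$. If $d_{\ell-1}<d_\ell$ (or $\ell=1$), we are in case (1). Otherwise $\ell\ge2$ and $d_\ell<d_{\ell-1}$, so $\pi_{\ell-1}<\pi_\ell$ is an ascent. Any value $v>\pi_\ell$ at a position after $\ell$ would give the $123$ pattern $\pi_{\ell-1}<\pi_\ell<v$. Hence every value larger than $\pi_\ell$ lies among $\pi_1,\ldots,\pi_{\ell-1}$. In deficit language, every integer in $\{0,\ldots,d_\ell-1\}$ belongs to $\{d_1,\ldots,d_{\ell-1}\}$. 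Since the $d_i$ are distinct, $d_\ell\notin\{d_1,\ldots,d_{\ell-1}\}$. Together these give $d_\ell=\operatorname{mex}\{d_1,\ldots,d_{\ell-1}\}$.

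\emph{Sufficiency, case (1).} Set $n=m+1$ and $\pi_i=n-d_i$ for $i\le\ell$; all of these are high, since $d_i\le m-1$. Then place the value $1$, and then all remaining values in decreasing order. The permutation is a decreasing block, the entry $1$, and a decreasing block. Any increasing pair must start in the first block (or at $1$) and end in the last block, so no $123$ can occur. We have $d(\pi)\ge\ell$, and $h(\pi)=\ell$ because $\pi_{\ell+1}=1$ is not high. Proposition~\ref{prop:active-sites} then gives $r=\ell-1$, so there are exactly $\ell$ active sites, with profile $\omega$.

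\emph{Sufficiency, case (2).} Again set $n=m+1$, $\pi_i=n-d_i$ for $i\le\ell$, followed by the remaining values in decreasing order. Since $\pi_{\ell-1}<\pi_\ell$, we have $d(\pi)=\ell-1$, and $h(\pi)\ge\ell$ because the first $\ell$ entries are high. So $r=\min\{\ell-1,h-1\}=\ell-1$ and the profile is $\omega$.

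The step needing care is $123$-avoidance in case (2). The prefix $\pi_1,\ldots,\pi_{\ell-1}$ and the tail are each decreasing, so an increasing triple would have to use $\pi_\ell$ as its middle term followed by a larger later entry. But the mex condition places every value larger than $\pi_\ell$ inside the prefix, so no such entry exists. Checking the other placements of the three entries (two in the prefix, or two in the tail) is routine by monotonicity. This mex step is where I expect the main subtlety, since it is the only place where the realizability condition is actually used in the construction.
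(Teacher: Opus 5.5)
Your proof is correct and follows essentially the same route as the paper. Necessity uses the same argument: the terminal ascent $\pi_{\ell-1}<\pi_\ell$ plus any later larger value would give a $123$. Sufficiency uses the same two constructions at $n=m+1$, and your checks are slightly tidier than the paper's direct ones (via Proposition~\ref{prop:active-sites} for the active sites, and via the middle-term argument for $123$-avoidance).

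The only addition in the paper's proof is the witness $1,n,n-1,\ldots,2$ for the dead state at length $m+1$. That is not required by the statement as formulated, since candidate profiles have $\ell\ge1$, but the paper later cites this theorem for the claim that every state of $\Omega_m$ occurs at level $m+1$.
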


\begin{proof}
Suppose first that
\[
\omega=\lambda_m(\pi)
=
(d_1,\ldots,d_\ell)
\]
for some $\pi\in\mathcal C_{n,m}$.

We already know
\[
d_1<\cdots<d_{\ell-1}.
\]
If
\[
d_{\ell-1}<d_\ell,
\]
then the entire profile is increasing, giving the first case.

It remains to consider
\[
r:=d_\ell<d_{\ell-1}.
\]
The last two recorded values are
\[
\pi_{\ell-1}
=
n-d_{\ell-1}
\qquad\text{and}\qquad
\pi_\ell
=
n-r,
\]
so
\[
\pi_{\ell-1}<\pi_\ell.
\]

We claim that every deficit smaller than $r$ must already occur among
\[
d_1,\ldots,d_{\ell-1}.
\]
Suppose not. Then there is some
\[
q<r
\]
with
\[
q\notin\{d_1,\ldots,d_{\ell-1}\}.
\]
The value
\[
n-q
\]
has thus not appeared among the first $\ell$ entries and must occur
later in $\pi$.

Since
\[
q<r<d_{\ell-1},
\]
we have
\[
n-d_{\ell-1}
<
n-r
<
n-q.
\]
These values occur in increasing positional order as
\[
\pi_{\ell-1},
\pi_\ell,
n-q,
\]
and, as a result, form a $123$ pattern, a contradiction.

Hence
\[
\{0,1,\ldots,r-1\}
\subseteq
\{d_1,\ldots,d_{\ell-1}\}.
\]
Because all deficits in the profile are distinct,
\[
r\notin\{d_1,\ldots,d_{\ell-1}\}.
\]
Therefore
\[
r
=
\operatorname{mex}
\{d_1,\ldots,d_{\ell-1}\}.
\]
This proves necessity.

We now prove sufficiency, and at the same time show that every realizable
profile occurs already at length
\[
n=m+1.
\]
At this length, the adjacency condition is automatic because the largest possible difference between two values is
\[
n-1=m.
\]
Thus we only need to construct a $123$-avoiding permutation with the desired active profile.

First suppose
\[
d_1<d_2<\cdots<d_\ell.
\]
Begin with
\[
n-d_1,\,
n-d_2,\,
\ldots,\,
n-d_\ell,
\]
then place $1$, and finally place all remaining unused values in decreasing order. Thus
\[
\pi
=
(n-d_1)(n-d_2)\cdots(n-d_\ell)\,1\,R^\downarrow,
\]
where $R^\downarrow$ denotes the unused values written in decreasing order.

The first $\ell$ entries are high and decreasing, so
\[
s_0,\ldots,s_{\ell-1}
\]
are active. The following site is inactive because the next entry is $1$, which is not high.

The permutation avoids $123$: the initial recorded segment is decreasing, the suffix $R^\downarrow$ is decreasing, and any increasing pair beginning with $1$ cannot be extended to a third larger entry because the entries following $1$ occur in decreasing order.

Hence
\[
\lambda_m(\pi)
=
(d_1,\ldots,d_\ell).
\]

Now suppose
\[
r:=d_\ell
=
\operatorname{mex}\{d_1,\ldots,d_{\ell-1}\}
<
d_{\ell-1}.
\]
Begin with
\[
n-d_1,\ldots,n-d_{\ell-1},n-r
\]
and place all unused values afterward in decreasing order:
\[
\pi
=
(n-d_1)\cdots(n-d_{\ell-1})(n-r)R^\downarrow.
\]

The first $\ell-1$ entries decrease, while
\[
n-d_{\ell-1}<n-r,
\]
so the recorded segment ends in an ascent.

Because $r$ is the mex of the preceding deficits, every deficit smaller than $r$ has already appeared. Equivalently, every value larger than $n-r$ has already occurred before the suffix. Thus every entry of $R^\downarrow$ is smaller than $n-r$. The only ascent in the recorded segment ends at $n-r$, and there is no later entry larger than $n-r$ with which that ascent could form a $123$ pattern. The suffix itself is decreasing. Hence $\pi$ avoids $123$.

Again,
\[
s_0,\ldots,s_{\ell-1}
\]
are active, while the next site is inactive because its prefix contains the ascent
\[
n-d_{\ell-1}<n-r.
\]
Thus
\[
\lambda_m(\pi)
=
(d_1,\ldots,d_{\ell-1},r).
\]

Finally, the dead state is realized at length $m+1$ by
\[
1,n,n-1,\ldots,2.
\]
Its first site is inactive because its first entry is not high, and the permutation avoids $123$ because everything after $1$ is decreasing.
\end{proof}

The mex condition has a simple interpretation that is easy to lose in the notation. If the recorded segment ends in an ascent
\[
\pi_{\ell-1}<\pi_\ell,
\]
then $\pi_\ell$ must already be larger than every value that remains unseen. Otherwise some larger later value would extend this ascent to a $123$ pattern. Saying that
\[
d_\ell
=
\operatorname{mex}\{d_1,\ldots,d_{\ell-1}\}
\]
is exactly the deficit-coordinate version of that statement.

We may now define the actual state space.

\begin{definition}[Realizable state space]
\label{def:omega}
For fixed $m$, let
\[
\Omega_m
\]
denote the set consisting of the dead state $\dagger$ together with all realizable active profiles characterized in Theorem~\ref{thm:realizable-profiles}.
\end{definition}

Unlike the original permutations, whose length grows without bound, $\Omega_m$ depends only on $m$. We can in fact count its states exactly.

\begin{cor}[Profiles of fixed length]
\label{cor:profiles-fixed-length}
Let $N_{m,\ell}$ be the number of realizable nondead profiles of length $\ell$. Then
\[
N_{m,\ell}
=
\binom{m+1}{\ell}-1.
\]
\end{cor}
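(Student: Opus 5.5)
We count directly using the characterization in Theorem~\ref{thm:realizable-profiles}, splitting the realizable nondead profiles of length $\ell$ into the two disjoint cases listed there: fully increasing profiles, and profiles whose last coordinate drops and equals the mex of the preceding deficits. The aim is to show that the first case contributes $\binom{m}{\ell}$ profiles and the second contributes $\binom{m}{\ell-1}-1$. Pascal's rule then gives
\[
N_{m,\ell}=\binom{m}{\ell}+\binom{m}{\ell-1}-1=\binom{m+1}{\ell}-1 .
\]

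For the first case, an increasing profile $d_1<\cdots<d_\ell$ with entries in $\{0,\ldots,m-1\}$ is the same thing as an $\ell$-element subset of an $m$-element set. Every such subset is realizable by part (1) of the theorem, so this case contributes exactly $\binom{m}{\ell}$ profiles.

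For the second case, we need $\ell\ge2$. A profile of this type is determined by its increasing prefix $S=\{d_1<\cdots<d_{\ell-1}\}\subseteq\{0,\ldots,m-1\}$, because $d_\ell$ is forced to equal $\operatorname{mex}(S)$. We must still decide which $S$ are admissible, and this is the step that needs care. Three conditions are required:
\begin{itemize}
\item $d_\ell=\operatorname{mex}(S)$ must lie in $\{0,\ldots,m-1\}$;
\item it must be distinct from the entries of $S$, which is automatic for a mex;
\item it must be smaller than $d_{\ell-1}=\max S$.
\end{itemize}
The first and third conditions fail exactly when $\operatorname{mex}(S)>\max S$, that is, when $S=\{0,1,\ldots,\ell-2\}$ is an initial segment. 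If $S$ is not an initial segment, then $\operatorname{mex}(S)<\max S\le m-1$, so all three conditions hold. Since $|S|=\ell-1\ge1$, exactly one subset of that size is an initial segment, and this case contributes $\binom{m}{\ell-1}-1$ profiles.

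Both cases also work when $\ell=1$. The second case is then empty, and $\binom{m}{0}-1=0$ agrees with this. The first case gives $\binom{m}{1}=m$, and $\binom{m+1}{1}-1=m$ as required.
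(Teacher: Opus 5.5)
Your proposal is correct and matches the paper's proof: the same split into fully increasing and terminal-ascent profiles, the same count of the latter by the prefix set $S$ with the single initial segment $\{0,\ldots,\ell-2\}$ excluded, and Pascal's rule to finish. Your explicit check that $\operatorname{mex}(S)<\max S\le m-1$, which keeps $d_\ell$ in range, is a small detail the paper leaves implicit.
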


\begin{proof}
We count the two types from Theorem~\ref{thm:realizable-profiles} separately.

A fully increasing profile
\[
d_1<d_2<\cdots<d_\ell
\]
is determined by choosing an \(\ell\)-element subset of
\[
\{0,1,\ldots,m-1\}.
\]
Hence there are
\[
\binom{m}{\ell}
\]
profiles of this type.

If \(\ell=1\), every realizable profile is of this form, so
\[
N_{m,1}=m=\binom{m+1}{1}-1,
\]
as required. As a result, we may assume that \(\ell\ge 2\).

Now consider terminal-ascent profiles. Choose the set
\[
S=\{d_1,\ldots,d_{\ell-1}\}.
\]
There are
\[
\binom{m}{\ell-1}
\]
possible \((\ell-1)\)-element subsets of
\(\{0,\ldots,m-1\}\). Once \(S\) is chosen, the final coordinate is forced to be
\[
d_\ell=\operatorname{mex}(S).
\]

Exactly one choice of \(S\) does not produce a terminal ascent, namely
\[
S=\{0,1,\ldots,\ell-2\}.
\]
For this set,
\[
\operatorname{mex}(S)=\ell-1>\max S,
\]
so appending the mex produces a fully increasing profile. For every other choice of \(S\),
\[
\operatorname{mex}(S)<\max S,
\]
and hence appending the mex produces exactly one terminal-ascent profile.

Thus there are
\[
\binom{m}{\ell-1}-1
\]
terminal-ascent profiles. Adding the two types and applying Pascal's identity gives
\[
N_{m,\ell}
=
\binom{m}{\ell}
+
\binom{m}{\ell-1}
-1
=
\binom{m+1}{\ell}-1.
\]
\end{proof}

\begin{cor}[Exact number of states]
\label{cor:state-count}
The realizable state space has cardinality
\[
|\Omega_m|
=
2^{m+1}-m-1.
\]
\end{cor}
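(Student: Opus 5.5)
The plan is to sum Corollary~\ref{cor:profiles-fixed-length} over all admissible lengths and then add one for the dead state. By Definition~\ref{def:omega}, $\Omega_m$ is the disjoint union of $\{\dagger\}$ and the sets of realizable nondead profiles of length $\ell$. By Definition~\ref{def:candidate-profile}, $\ell$ ranges over $1\le\ell\le m$. This range is forced, since the entries of a profile are distinct elements of the $m$-element set $\{0,\ldots,m-1\}$, in agreement with Corollary~\ref{cor:bounded-active}. Hence
\[
|\Omega_m| = 1+\sum_{\ell=1}^{m} N_{m,\ell} = 1+\sum_{\ell=1}^{m}\left(\binom{m+1}{\ell}-1\right).
\]

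Next I evaluate the sum with the binomial theorem. We have $\sum_{\ell=0}^{m+1}\binom{m+1}{\ell}=2^{m+1}$. Removing the terms $\ell=0$ and $\ell=m+1$, each equal to $1$, gives $\sum_{\ell=1}^{m}\binom{m+1}{\ell}=2^{m+1}-2$. Subtracting the $m$ copies of $1$ and adding $1$ for $\dagger$ yields
\[
|\Omega_m|=1+(2^{m+1}-2)-m=2^{m+1}-m-1.
\]

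The arithmetic is routine. The only point needing care is that the dead state really belongs to $\Omega_m$ and is counted exactly once. Definition~\ref{def:omega} includes it explicitly, and the final paragraph of the proof of Theorem~\ref{thm:realizable-profiles} shows it is realized, for example by $1,n,n-1,\ldots,2$ at length $m+1$. Since $\dagger$ is not a tuple, it is distinct from every nondead profile. Nondead profiles of different lengths are different tuples, so the union is disjoint and no profile is counted twice. As a sanity check, $m=1$ gives the profile $(0)$ together with $\dagger$, which is $2=2^2-1-1$ states.
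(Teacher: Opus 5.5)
Your proposal is correct and is the paper's proof: you sum $N_{m,\ell}=\binom{m+1}{\ell}-1$ over $1\le\ell\le m$, use $\sum_{\ell=1}^{m}\binom{m+1}{\ell}=2^{m+1}-2$, and add one for $\dagger$. Your extra remarks on disjointness and on realizing $\dagger$ are fine, though Definition~\ref{def:omega} already includes the dead state explicitly.
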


\begin{proof}
Including the dead state,
\[
|\Omega_m|
=
1+
\sum_{\ell=1}^{m}
\left(
\binom{m+1}{\ell}-1
\right).
\]
Therefore
\[
|\Omega_m|
=
1+
\left(
\sum_{\ell=1}^{m}
\binom{m+1}{\ell}
\right)
-m.
\]
Since
\[
\sum_{\ell=1}^{m}
\binom{m+1}{\ell}
=
2^{m+1}-2,
\]
we obtain
\[
|\Omega_m|
=
2^{m+1}-m-1.
\]
\end{proof}

Thus the number of states may grow quickly with $m$, but for every fixed $m$ it is finite and, crucially, independent of $n$. This is exactly what is needed for a transfer-matrix enumeration.

\begin{remark}
Corollary~\ref{cor:state-count} gives the exact number of realizable active profiles. It does not claim that this is the smallest possible state space for enumeration. Two distinct profiles could, in principle, have identical future counting behavior and, consequently, admit a further quotient. The state space $\Omega_m$ is exact for the active profile description developed here.
\end{remark}

Before defining the matrix, we record one useful consequence of realizability.

\begin{cor}[Closure under profile transitions]
\label{cor:profile-closure}
If
\[
\omega\in\Omega_m
\]
is nondead and
\[
0\le j\le|\omega|-1,
\]
then
\[
\Phi_{m,j}(\omega)\in\Omega_m.
\]
\end{cor}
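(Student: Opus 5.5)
The plan is to argue through realizability rather than checking the combinatorial shape of $\Phi_{m,j}(\omega)$ directly. Since $\omega\in\Omega_m$ is nondead, Theorem~\ref{thm:realizable-profiles} guarantees a permutation realizing it. Specifically, there is $\pi\in\mathcal C_{m+1,m}$ with $\lambda_m(\pi)=\omega$. The length $n=m+1$ satisfies the standing hypothesis $n\ge m+1$, so the results of Section~\ref{sec:structure} and Proposition~\ref{prop:profile-sufficiency} apply to $\pi$.

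Next, fix $0\le j\le|\omega|-1$. By Definition~\ref{def:active-profile} and Proposition~\ref{prop:active-sites}, $\pi$ has exactly $|\omega|$ active sites, namely $s_0,\ldots,s_{|\omega|-1}$. Hence $s_j$ is active, and by definition the child $\tau=I_j(\pi)$ lies in $\mathcal C_{m+2,m}$. Its length $m+2\ge m+1$ is still in range, so $\lambda_m(\tau)$ is defined. By construction, $\lambda_m(\tau)$ is either the dead state or the active profile of a genuine permutation, and in the latter case it is realizable by Definition~\ref{def:candidate-profile}. Either way, $\lambda_m(\tau)\in\Omega_m$.

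Finally, Proposition~\ref{prop:profile-sufficiency} gives $\lambda_m(\tau)=\Phi_{m,j}(\omega)$. The case in which trimming returns $\dagger$ is covered, because $\dagger\in\Omega_m$ and Definition~\ref{def:trim} interprets an empty list as $\dagger$. This yields $\Phi_{m,j}(\omega)\in\Omega_m$.

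The main point to watch is a bookkeeping issue. Proposition~\ref{prop:profile-sufficiency} is stated for $1\le j\le\ell-1$ using the internal formula, with the front formula handling $j=0$. I would confirm that the index range $0\le j\le|\omega|-1$ in the corollary matches the active sites exactly, so that every $\Phi_{m,j}$ in question corresponds to a legal insertion. I would also confirm that ``realizable'' permits any $n$, which it does. If a self-contained check were preferred, the alternative is to verify directly that $\operatorname{trim}_m$ of the shifted tuple satisfies condition (1) or (2) of Theorem~\ref{thm:realizable-profiles}. The delicate case is $j=0$ with a terminal-ascent profile, where one must check that the mex condition survives the shift by $1$ and the prepended $0$. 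The realization argument avoids this computation entirely.
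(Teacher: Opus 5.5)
Your proof is correct and is essentially the paper's argument: realize $\omega$ by some $\pi\in\mathcal C_{m+1,m}$ using Theorem~\ref{thm:realizable-profiles}, insert the new maximum at the active site $s_j$, and identify the child's profile with $\Phi_{m,j}(\omega)$ via Proposition~\ref{prop:profile-sufficiency}. The one step left implicit, by you and by the paper alike, is that the child's profile is a candidate profile, which realizability formally requires. This follows from the remarks preceding Definition~\ref{def:candidate-profile} together with Corollary~\ref{cor:bounded-active}, applied at length $m+2\ge m+1$.
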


\begin{proof}
By Theorem~\ref{thm:realizable-profiles}, choose a permutation
\[
\pi\in\mathcal C_{m+1,m}
\]
with
\[
\lambda_m(\pi)=\omega.
\]
Insertion at the active site $s_j$ produces a valid child. By Proposition~\ref{prop:profile-sufficiency}, the child's active profile is
\[
\Phi_{m,j}(\omega).
\]
Hence this profile is realizable and belongs to $\Omega_m$.
\end{proof}

\subsection{The transfer system}
\label{subsec:transfer-system}

We now have all the ingredients of a finite-state process. The vertices are the states in $\Omega_m$, and each active insertion site gives a transition from a parent state to a child state.

\begin{definition}[Transfer matrix]
\label{def:transfer-matrix}
For fixed \(m\), define the matrix \(T_m\), with rows and columns indexed by \(\Omega_m\), as follows. For every nondead state \(\omega\in\Omega_m\), set
\[
(T_m)_{\omega,\eta}
=
\#\left\{
j\in\{0,\ldots,|\omega|-1\}:
\Phi_{m,j}(\omega)=\eta
\right\}.
\]
For the dead state, set
\[
(T_m)_{\dagger,\eta}=0
\]
for every \(\eta\in\Omega_m\).
\end{definition}

Thus $(T_m)_{\omega,\eta}$ counts the number of active insertion sites of a permutation in state $\omega$ that produce a child in state $\eta$. Conceptually, two different active sites could conceivably lead to the same child state, in which case some matrix entries would exceed $1$. In fact, this never occurs.

\begin{prop}[$0$--$1$ structure and row sums]
\label{prop:zero-one}
Let
\[
\omega=(d_1,\ldots,d_\ell)\in\Omega_m
\]
be nondead. Then
\[
\Phi_{m,0}(\omega),
\Phi_{m,1}(\omega),
\ldots,
\Phi_{m,\ell-1}(\omega)
\]
are pairwise distinct.

Consequently,
\[
(T_m)_{\omega,\eta}\in\{0,1\}
\]
for all $\omega,\eta\in\Omega_m$, and
\[
\sum_{\eta\in\Omega_m}
(T_m)_{\omega,\eta}
=
|\omega|.
\]
The row sum of the dead state is $0$.
\end{prop}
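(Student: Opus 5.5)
The plan is a direct comparison of the lists produced by Definition~\ref{def:phi}. The main tool is the strict monotonicity $d_1<\cdots<d_{\ell-1}$, which every candidate profile has by Definition~\ref{def:candidate-profile}. First I will pin down exactly where $\operatorname{trim}_m$ can cut each list. Then I will separate the outputs using two invariants: whether the first coordinate is $0$, and whether (and where) a coordinate $0$ occurs.

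\emph{Step 1: front insertion.} For $\Phi_{m,0}(\omega)=\operatorname{trim}_m(0,d_1+1,\ldots,d_\ell+1)$, the first entry is $0\neq m$. Hence trimming never produces $\dagger$, and the output is a nondead list whose first coordinate is $0$. Since all $d_i\ge 0$, every later coordinate is at least $1$. So $\Phi_{m,0}(\omega)$ contains $0$ exactly once, in the first position.

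\emph{Step 2: internal insertions, $1\le j\le\ell-1$.} Here the list is $(d_1+1,\ldots,d_j+1,0)$. A coordinate equals $m$ only when $d_i=m-1$ for some $i\le j$. Because $d_1<\cdots<d_j\le m-1$ are strictly increasing, this can happen only for $i=j$. Moreover $d_j=m-1$ can hold for at most one index $j\le\ell-1$, and since $d_{\ell-1}$ is the largest of $d_1,\ldots,d_{\ell-1}$, that index must be $j=\ell-1$. This gives two cases.
\begin{itemize}
\item[(a)] If $d_j\ne m-1$, nothing is trimmed. Then $\Phi_{m,j}(\omega)$ has length $j+1$, first coordinate $d_1+1\ge1$, and a single $0$ in its last position.
\item[(b)] If $d_j=m-1$ (so $j=\ell-1$), the output is $(d_1+1,\ldots,d_{j-1}+1)$. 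This is either $\dagger$ (when $j=1$) or a nonempty list containing no $0$.
\end{itemize}

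\emph{Step 3: distinctness.} I will compare the outputs pairwise.
\begin{itemize}
\item[(i)] $\Phi_{m,0}$ is nondead and starts with $0$. Every other output is either $\dagger$, starts with $d_1+1\ge1$, or (in case (b)) contains no $0$. So $\Phi_{m,0}$ differs from all the others.
\item[(ii)] Two untrimmed internal outputs $\Phi_{m,j}$ and $\Phi_{m,j'}$ with $j\ne j'$ have different lengths $j+1\neq j'+1$.
\item[(iii)] At most one internal output falls under case (b). It is either $\dagger$, which no other output equals, or a nonempty list without a $0$, whereas every case-(a) output contains $0$.
\end{itemize}

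\emph{Step 4: consequences.} Distinctness of $\Phi_{m,0}(\omega),\ldots,\Phi_{m,\ell-1}(\omega)$ means that for each $\eta$ at most one $j$ satisfies $\Phi_{m,j}(\omega)=\eta$, so $(T_m)_{\omega,\eta}\in\{0,1\}$. All of these outputs lie in $\Omega_m$ by Corollary~\ref{cor:profile-closure}, so summing the row counts each of the $\ell=|\omega|$ indices exactly once. The dead row is zero by Definition~\ref{def:transfer-matrix}.

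The only delicate point is Step 2: showing that the trimming cut, and hence the possibility of reaching $\dagger$, can occur for at most one internal index, namely $j=\ell-1$. I will handle this through the monotonicity of $d_1,\ldots,d_{\ell-1}$ and the bound $d_i\le m-1$. The trimmed case has to be treated separately because it destroys the \emph{last entry is $0$} marker used to separate the untrimmed internal outputs.
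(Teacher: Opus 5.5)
Your proposal is correct and follows the paper's own argument. Trimming can cut an internal child only at $j=\ell-1$, when $d_{\ell-1}=m-1$, and the children are then told apart by the leading $0$ of the front child, the distinct lengths and trailing $0$ of the untrimmed internal children, and the absence of a $0$ (or deadness) of the trimmed one; your explicit appeal to Corollary~\ref{cor:profile-closure} to justify the row sum over $\Omega_m$ is a small point the paper leaves implicit.
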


\begin{proof}
If \(\ell=1\), there is only one active-site index, namely \(j=0\). Accordingly, the pairwise-distinctness assertion is immediate, and the row contains exactly one nonzero entry. Thus, we may assume throughout the remainder of the proof that \(\ell\ge 2\).

Front insertion gives
\[
\Phi_{m,0}(\omega)
=
\operatorname{trim}_m
(0,d_1+1,\ldots,d_\ell+1).
\]
Its first coordinate is always $0$, since that initial $0$ can never be removed by trimming.

Now let
\[
1\le j\le\ell-2.
\]
Because
\[
d_1<\cdots<d_{\ell-1},
\]
we have
\[
d_j<d_{\ell-1}\le m-1,
\]
so
\[
d_j\le m-2.
\]
Therefore none of
\[
d_1+1,\ldots,d_j+1
\]
equals $m$, and no trimming occurs. Hence
\[
\Phi_{m,j}(\omega)
=
(d_1+1,\ldots,d_j+1,0).
\]
This profile begins with a positive deficit, ends in $0$, and has length $j+1$.

Two such profiles corresponding to distinct $j$ have different lengths and thus cannot agree. None can agree with the front-insertion child, which begins with $0$. It remains to consider $j=\ell-1$.

If
\[
d_{\ell-1}\le m-2,
\]
then again no trimming occurs, and
\[
\Phi_{m,\ell-1}(\omega)
=
(d_1+1,\ldots,d_{\ell-1}+1,0).
\]
This has length $\ell$, whereas all earlier internal children have smaller length, and it begins with a positive deficit, so it differs from the front-insertion child.

If instead
\[
d_{\ell-1}=m-1,
\]
then
\[
\Phi_{m,\ell-1}(\omega)
=
\operatorname{trim}_m
(d_1+1,\ldots,d_{\ell-2}+1,m,0)
\]
and hence
\[
\Phi_{m,\ell-1}(\omega)
=
(d_1+1,\ldots,d_{\ell-2}+1),
\]
with the empty list interpreted as $\dagger$.

If nonempty, this profile contains no $0$, so it differs both from the front-insertion child, which begins with $0$, and from every untrimmed internal child, which ends with $0$. If it is $\dagger$, the distinction is immediate.

Thus all $\ell$ active sites produce distinct child states. The claims about the matrix entries and row sums follow.
\end{proof}

The $0$--$1$ structure makes the transfer matrix especially concrete: $T_m$ is simply the adjacency matrix of the directed graph of possible state transitions. A state of length $\ell$ has exactly $\ell$ outgoing edges, one for each active site. We now use this matrix to count permutations.

\begin{definition}[State-count vector]
\label{def:state-count}
For $n\ge m+1$ and $\omega\in\Omega_m$, define
\[
u_{n,m}(\omega)
=
\#\left\{
\pi\in\mathcal C_{n,m}:
\lambda_m(\pi)=\omega
\right\}.
\]
Let
\[
u_{n,m}
=
\bigl(u_{n,m}(\omega)\bigr)_{\omega\in\Omega_m}
\]
be the resulting row vector.
\end{definition}

The transfer matrix now has a direct counting meaning. Every permutation at level $n+1$ has a unique parent at level $n$, obtained by deleting its maximum. Conversely, every child is obtained by inserting the new maximum into one active site of its parent. This is exactly what matrix multiplication records. In other words, when the relevant information associated with an insertion process can be represented by finitely many labels, the resulting generating tree can be encoded by a finite transition system, leading to rational generating functions~\cite{Vatter2006,AlbertLintonRuskuc2005}.

\begin{theorem}[Finite-state transfer relation]
\label{thm:transfer-relation}
For every fixed $m\ge1$ and every $n\ge m+1$,
\[
u_{n+1,m}
=
u_{n,m}T_m.
\]
Consequently,
\[
u_{n,m}
=
u_{m+1,m}T_m^{\,n-m-1}
\]
for every $n\ge m+1$.
\end{theorem}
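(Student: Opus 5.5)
The plan is to prove the relation coordinatewise: for each $\eta\in\Omega_m$, show that $u_{n+1,m}(\eta)=\sum_{\omega\in\Omega_m}u_{n,m}(\omega)(T_m)_{\omega,\eta}$. I will do this with a bijection between $\{\tau\in\mathcal C_{n+1,m}:\lambda_m(\tau)=\eta\}$ and the set of pairs $(\pi,j)$ with $\pi\in\mathcal C_{n,m}$, $s_j\in\operatorname{Act}(\pi)$, and $\Phi_{m,j}(\lambda_m(\pi))=\eta$. The forward map sends $\tau$ to $(\pi,j)$, where $\pi$ is obtained by deleting $n+1$ and $j$ is the number of entries preceding $n+1$ in $\tau$. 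The reverse map is $(\pi,j)\mapsto I_j(\pi)$.

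The steps, in order, are as follows.

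First, by Lemma~\ref{lem:max-deletion}, the deleted permutation $\pi$ lies in $\mathcal C_{n,m}$. Since $\tau=I_j(\pi)\in\mathcal C_{n+1,m}$, the site $s_j$ is active by definition. Deletion and insertion are clearly mutually inverse, so every $\tau$ arises from exactly one pair $(\pi,j)$ with $s_j$ active.

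Second, I will identify the index $j$ with a column of $T_m$. Because $n\ge m+1$, Proposition~\ref{prop:active-sites} shows that the active sites of $\pi$ are exactly $s_0,\dots,s_{\ell-1}$, where $\ell=|\lambda_m(\pi)|$. The dead state has no active sites, which matches the zero row of $T_m$. For nondead $\omega=\lambda_m(\pi)$, Proposition~\ref{prop:profile-sufficiency} gives $\lambda_m(I_j(\pi))=\Phi_{m,j}(\omega)$. This lies in $\Omega_m$ by Corollary~\ref{cor:profile-closure}.

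Third, I group by the parent's state. Fix $\omega$. The number of children of a given $\pi$ in state $\omega$ that land in state $\eta$ is $\#\{j\le\ell-1:\Phi_{m,j}(\omega)=\eta\}=(T_m)_{\omega,\eta}$. This count is the same for every such $\pi$. Summing over $\pi$ gives $u_{n,m}(\omega)(T_m)_{\omega,\eta}$, and summing over $\omega$ gives the claim.

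The iterated form $u_{n,m}=u_{m+1,m}T_m^{\,n-m-1}$ then follows by induction on $n$ starting from $n=m+1$.

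The main obstacle is a hypothesis check. Proposition~\ref{prop:profile-sufficiency} and Definition~\ref{def:active-profile} compute the child's profile using the high set at the child's length $n+1$, so they need the active-site structure at length $n+1$. Since $n+1\ge m+2>m+1$, the standing hypothesis $n\ge m+1$ covers both parent and child, so Proposition~\ref{prop:active-sites} applies at both levels. I will state this explicitly, and note that this is why the recursion starts at $m+1$ rather than at smaller lengths.
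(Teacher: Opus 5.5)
Your proposal is correct and follows essentially the same route as the paper: count each coordinate $u_{n+1,m}(\eta)$ via the unique-parent property (Lemma~\ref{lem:max-deletion}), identify each child's state as $\Phi_{m,j}$ of the parent's state by Proposition~\ref{prop:profile-sufficiency}, sum over parent states, and then iterate from $n=m+1$. Your explicit bijection with pairs $(\pi,j)$ and your check that the active-site structure applies at both lengths $n$ and $n+1$ spell out steps the paper leaves implicit.
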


\begin{proof}
Fix a state
\[
\eta\in\Omega_m.
\]
We count the permutations in $\mathcal C_{n+1,m}$ having profile $\eta$.

Every
\[
\tau\in\mathcal C_{n+1,m}
\]
has a unique parent obtained by deleting $n+1$. By closure under maximum deletion, this parent lies in $\mathcal C_{n,m}$.

Now fix a parent state $\omega$. There are
\[
u_{n,m}(\omega)
\]
parents in that state. For each such parent, $(T_m)_{\omega,\eta}$ counts exactly the active sites whose insertion gives a child in state $\eta$. Hence
\[
u_{n+1,m}(\eta)
=
\sum_{\omega\in\Omega_m}
u_{n,m}(\omega)
(T_m)_{\omega,\eta}.
\]
This is precisely the $\eta$-coordinate of
\[
u_{n,m}T_m.
\]
Since the identity holds for every $\eta$,
\[
u_{n+1,m}
=
u_{n,m}T_m.
\]

Iterating from level $m+1$ gives
\[
u_{n,m}
=
u_{m+1,m}T_m^{\,n-m-1}.
\]
\end{proof}

It is worth emphasizing exactly what has occurred. The original problem asks us to count permutations of arbitrarily large size, and Theorem~\ref{thm:transfer-relation} replaces this with repeated multiplication by one fixed matrix whose dimension is
\[
2^{m+1}-m-1.
\]
For fixed $m$, the dependence on $n$ has been reduced entirely to the number of times the same finite transition rule is applied. To make this description completely explicit, only one piece remains --- the starting vector at level $m+1$.

\subsection{The initial state vector}
\label{subsec:initial-vector}

The level
\[
n=m+1
\]
is the natural place to initialize the transfer process. At this length the adjacency condition is vacuous, because the largest possible difference between two values is
\[
m.
\]
Therefore
\[
\mathcal C_{m+1,m}
=
\operatorname{Av}_{m+1}(123).
\]

\begin{definition}[Initial state vector]
\label{def:initial-vector}
Define
\[
v_m:=u_{m+1,m}.
\]
Thus
\[
v_m(\omega)
=
\#\left\{
\pi\in\mathcal C_{m+1,m}:
\lambda_m(\pi)=\omega
\right\}.
\]
\end{definition}

For the dead state and for fully increasing deficit profiles, the initial multiplicity is $1$. Terminal-ascent profiles are the only states that admit more than one completion. Their multiplicities reduce to a familiar Catalan-triangle count~\cite{DeSantisEtAl2013}.

\begin{definition}[Constrained Catalan count]
\label{def:B}
For integers $0\le h\le N$, let $B(N,h)$ denote the number of $123$-avoiding permutations of length $N$ in which the $h$ largest values occur from left to right in decreasing order.
\end{definition}

\begin{lemma}[Catalan-triangle formula]
\label{lem:catalan-triangle}
For $0\le h\le N$,
\[
B(N,h)
=
\frac{h+1}{N+1}
\binom{2N-h}{N}.
\]
\end{lemma}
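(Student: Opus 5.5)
The plan is to prove the formula by a recursive decomposition on the maximum, checked against the ballot-number recurrence. Write $C(N,h)=\frac{h+1}{N+1}\binom{2N-h}{N}$. This is the standard ballot/Catalan-triangle number. It satisfies $C(N,0)=C(N,1)=\mathrm{Cat}_N$ and $C(N,N)=1$, together with the recurrence
\[
C(N,h)=C(N,h+1)+C(N-1,h-1)\qquad(1\le h\le N-1),
\]
which is a routine binomial manipulation that I will verify directly. I will show that $B(N,h)$ satisfies the same boundary values and the same recurrence. Induction on $N$, then on $N-h$ for fixed $N$, finishes the proof.

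For the boundary cases: $h=0$ and $h=1$ impose no condition, so $B(N,0)=B(N,1)=|\operatorname{Av}_N(123)|=\mathrm{Cat}_N$. When $h=N$, all values appear in decreasing order, so $B(N,N)=1$.

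For the recurrence, fix $1\le h\le N-1$ and consider a permutation $\pi$ counted by $B(N,h)$, so that $N,N-1,\dots,N-h+1$ appear left to right. I split according to where $N-h$ sits relative to $N-h+1$.
\begin{itemize}
\item If $N-h$ lies to the right of $N-h+1$, then the $h+1$ largest values are decreasing. These permutations are counted by $B(N,h+1)$.
\item Otherwise $N-h$ precedes $N-h+1$. I will show these are in bijection with the permutations counted by $B(N-1,h-1)$.
\end{itemize}

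For the bijection, delete $N$ and look at the resulting permutation, in which the $h-1$ largest values $N-1,\dots,N-h+1$ are still decreasing. The case $h=1$ needs separate care. The key facts are the following.
\begin{itemize}
\item Since $N-h<N-h+1$ occur in increasing order, $123$-avoidance forces $N$ to precede $N-h$. If it did not, $N$ would have to come after both, but $N$ already precedes $N-h+1$, so it precedes $N-h$ too.
\item Deleting $N$ preserves $123$-avoidance and the constraint on the $h-1$ largest values of the remaining $N-1$ values.
\item Conversely, given $\sigma$ counted by $B(N-1,h-1)$, the position of $N$ must be recovered uniquely. One would like to insert $N$ at a canonical place, for example immediately before the leftmost entry, so that $N$ becomes first. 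That insertion never creates a $123$, since $N$ is then first and maximal.
\end{itemize}

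So I expect the second class to consist exactly of permutations whose first entry is $N$. Actually, a cleaner route is the following. Since $N$ precedes $N-1,\dots$ and any entry before $N$ that is smaller than $N-h+1$, together with $N-h+1$, would be problematic only if it were followed by something larger, I will instead use the standard first-entry decomposition, as in the Catalan-triangle literature cited~\cite{DeSantisEtAl2013}.

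The main obstacle is exactly this bijection: pinning down where $N$ can sit when $N-h$ precedes $N-h+1$, and making the deletion invertible. If the direct argument gets messy, I will fall back on transporting the problem to Dyck paths via a standard $123$-avoiding bijection (Krattenthaler's), under which the condition ``the $h$ largest values decrease'' becomes ``the path begins with at least $h$ up-steps'' (or a comparable prefix condition). Dyck paths of semilength $N$ starting with $h$ up-steps are counted by the ballot number $\frac{h+1}{N+1}\binom{2N-h}{N}$ via the reflection principle. I would first try to check this correspondence on the small cases $N\le 4$ before committing to it.
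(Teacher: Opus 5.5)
Your skeleton is sound: the boundary values, the ballot recurrence for $\frac{h+1}{N+1}\binom{2N-h}{N}$, the induction order, and the split of $B(N,h)$ into permutations where $N-h$ follows $N-h+1$ (exactly those counted by $B(N,h+1)$) and those where $N-h$ precedes $N-h+1$. But the whole content of the lemma is the claim that the second class has $B(N-1,h-1)$ elements. You never prove it, and the map you propose is wrong.

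Your ``key fact'' that $N$ must precede $N-h$ is false. The argument assumes $N$ would have to lie after both $N-h$ and $N-h+1$, but $N$ can sit between them, and $N-h,\,N,\,N-h+1$ is a $132$, not a $123$. For $N=3$, $h=2$ the second class is $\{132,312\}$. In $132$ the value $1$ precedes $3$. Deleting $3$ sends both permutations to $12$, so deleting the maximum is not injective and never hits $21\in B(2,1)$. For $N=4$, $h=2$ the second class is $\{2143,2413,2431,4213,4231\}$, and three of these delete to $213$. Your guess that the second class consists of permutations beginning with $N$ also fails ($132$, $2143$). The closing appeal to ``the standard first-entry decomposition'' and the Krattenthaler fallback, whose image condition you leave as ``or a comparable prefix condition'' and have not checked, are placeholders rather than arguments. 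So the one nontrivial step is missing.

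The idea you need is the following. In the second class, $N-h$ precedes the larger value $N-h+1$. So every entry to the left of $N-h$ exceeds $N-h$, since otherwise it forms a $123$ with $N-h,N-h+1$. Hence each such entry lies in $\{N-h+2,\dots,N\}$, whose elements appear in the order $N,N-1,\dots$, and therefore $\pi_1\in\{N,N-h\}$ (for $h=1$ that set is empty and $\pi_1=N-1$). Deleting the \emph{first entry} and standardizing is then a bijection onto $B(N-1,h-1)$. For the inverse, given $\sigma$, prepend $N$ if $h\ge 2$ and $N-h$ precedes $N-h+1$ in $\sigma$. Otherwise the $h$ largest values of $\sigma$ are decreasing, and you prepend $N-h$ after raising the values $\ge N-h$ by one. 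Neither choice creates a $123$: the prepended entry is either the maximum, or lies below only the values $N-h+1,\dots,N$, which appear in decreasing order.

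With this repair your route is valid and differs from the paper's. The paper applies inverse composed with reverse-complement to turn the condition into ``initial decreasing run of length at least $h$''. It then derives $E(N+1,r)=\sum_{s\ge r-1}E(N,s)$ by inserting the maximum, and sums a hockey-stick closed form. Under that symmetry, deleting the first entry becomes deleting the maximum, and your second class becomes ``initial run of length exactly $h$''. So your recurrence is the paper's recursion in different coordinates; using the ballot recurrence directly just spares you the closed form for $E(N,r)$.
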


\begin{proof}
It is convenient to convert the condition on the \(h\) largest values into a condition on an initial decreasing run. Inversion preserves \(123\)-avoidance, and reverse-complement also preserves \(123\)-avoidance. Their composition gives a bijection under which the condition that the \(h\) largest values occur in decreasing order becomes the condition that the first \(h\) entries form a decreasing sequence.

Let \(E(N,r)\) denote the number of \(123\)-avoiding permutations of length \(N\) whose initial decreasing run has length exactly \(r\), for \(1\le r\le N\). For convenience, set
\[
E(N,0)=0.
\]

Consider a \(123\)-avoiding permutation of length \(N\) whose initial decreasing run has length \(s\). The new maximum may be inserted at any of the sites
\[
s_0,s_1,\ldots,s_s.
\]
Front insertion produces an initial decreasing run of length \(s+1\), while insertion after exactly \(j\) entries, with \(1\le j\le s\), produces an initial decreasing run of length \(j\). Therefore, for \(1\le r\le N+1\),
\[
E(N+1,r)=\sum_{s=r-1}^{N}E(N,s).
\]

We claim that
\[
E(N,r)
=
\binom{2N-r-1}{N-1}
-
\binom{2N-r-1}{N}
\]
for \(0\le r\le N\), where the formula at \(r=0\) indeed gives \(0\).

For \(N=1\), we have \(E(1,0)=0\) and \(E(1,1)=1\), so the formula
holds. Suppose it holds at length \(N\). Then
\begin{align*}
E(N+1,r)
&=
\sum_{s=r-1}^{N}
\left[
\binom{2N-s-1}{N-1}
-
\binom{2N-s-1}{N}
\right].
\end{align*}
Setting \(t=2N-s-1\), the first sum becomes
\[
\sum_{t=N-1}^{2N-r}\binom{t}{N-1}
=
\binom{2N-r+1}{N},
\]
by the hockey-stick identity, while the second becomes
\[
\sum_{t=N-1}^{2N-r}\binom{t}{N}
=
\binom{2N-r+1}{N+1}.
\]
Hence
\[
E(N+1,r)
=
\binom{2N-r+1}{N}
-
\binom{2N-r+1}{N+1},
\]
which is precisely
\[
E(N+1,r)
=
\binom{2(N+1)-r-1}{(N+1)-1}
-
\binom{2(N+1)-r-1}{N+1}.
\]
This completes the induction.

For \(h\ge 1\), requiring the first \(h\) entries to decrease is equivalent to requiring the initial decreasing run to have length at least \(h\). Therefore
\begin{align*}
B(N,h)
&=
\sum_{r=h}^{N}E(N,r) \\
&=
\binom{2N-h}{N}
-
\binom{2N-h}{N+1}.
\end{align*}
Factoring the right-hand side gives
\begin{align*}
B(N,h)
&=
\left(
1-\frac{N-h}{N+1}
\right)
\binom{2N-h}{N} \\
&=
\frac{h+1}{N+1}\binom{2N-h}{N}.
\end{align*}

For \(h=0\), the condition is vacuous, so
\[
B(N,0)=C_N
=
\frac{1}{N+1}\binom{2N}{N},
\]
which is the same formula with \(h=0\).
\end{proof}

We can now give every coordinate of $v_m$ explicitly.

\begin{theorem}[Explicit initial vector]
\label{thm:initial-vector}
Let
\[
v_m=u_{m+1,m}.
\]

For the dead state,
\[
v_m(\dagger)=1.
\]

If
\[
\omega=(d_1,\ldots,d_\ell)
\]
is a realizable profile satisfying
\[
d_1<d_2<\cdots<d_\ell,
\]
then
\[
v_m(\omega)=1.
\]

Finally, suppose
\[
\omega=(d_1,\ldots,d_{\ell-1},r)
\]
is a terminal-ascent profile. Put
\[
s=d_{\ell-1},
\qquad
N=m+1-\ell,
\qquad
h=s-\ell+1.
\]
Then
\[
v_m(\omega)
=
B(N,h)
=
\frac{h+1}{N+1}
\binom{2N-h}{N}.
\]
\end{theorem}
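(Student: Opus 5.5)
The plan is to work directly at length $n=m+1$. There the adjacency condition is vacuous, so $\mathcal C_{m+1,m}=\operatorname{Av}_{m+1}(123)$ and $H_{m+1,m}=\{2,\ldots,m+1\}$ consists of every value except $1$. For each state I will read off from Proposition~\ref{prop:active-sites} exactly which prefixes of $\pi$ produce that profile, and then count the $123$-avoiding completions of the prefix.

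\emph{Dead state and increasing profiles.} The dead state means $h(\pi)=0$, so $\pi_1\notin H_{m+1,m}$, which forces $\pi_1=1$. Since $1$ followed by any ascent would give a $123$ pattern, the rest of $\pi$ must be decreasing, and exactly one permutation remains. Next take an increasing profile $d_1<\cdots<d_\ell$. Then $\pi$ begins with the $\ell$ high values $n-d_1>\cdots>n-d_\ell$, so $d(\pi)\ge \ell$. Proposition~\ref{prop:active-sites} gives $r=\min\{d(\pi),h(\pi)-1\}=\ell-1$, which forces $h(\pi)=\ell$. Since $\ell\le m<n$, the entry $\pi_{\ell+1}$ exists and is non-high, so $\pi_{\ell+1}=1$. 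Everything after $1$ must again be decreasing. I will then check, as in the proof of Theorem~\ref{thm:realizable-profiles}, that the resulting permutation avoids $123$; this gives $v_m(\omega)=1$.

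\emph{Terminal-ascent profiles.} Here the first $\ell$ entries are forced to be $n-d_1,\ldots,n-d_{\ell-1},n-r$, with an ascent at position $\ell$. Thus $d(\pi)=\ell-1$, and $h(\pi)\ge\ell$ holds automatically, so the profile is exactly $\omega$. Conversely, any $123$-avoiding permutation with this prefix has profile $\omega$. The task therefore reduces to counting the suffixes $\sigma$, of length $N=m+1-\ell$, for which the whole permutation avoids $123$. Because $r$ is the mex, every value of $\sigma$ lies below $n-r$, so the ascent $\pi_{\ell-1}<\pi_\ell$ cannot be extended. Patterns lying entirely in the prefix are impossible. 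The remaining patterns are of two kinds: those inside $\sigma$, and those of the form $a<b<c$ with $a$ in the prefix and $b<c$ an ascent in $\sigma$. For the second kind, the most restrictive choice of $a$ is the smallest prefix value $n-s$, where $s=d_{\ell-1}$. So the condition is that $\sigma$ avoids $123$ and has no ascent $b<c$ with $b>n-s$. Any such ascent has $c>n-s$ too, so the condition says exactly that the values of $\sigma$ exceeding $n-s$ appear in decreasing order.

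\emph{Counting the constrained suffixes.} I will count the values of $\sigma$ exceeding $n-s$. These are the values with deficit in $\{0,\ldots,s-1\}$ not used in the prefix. All $\ell$ prefix deficits lie in $\{0,\ldots,s\}$, since $r<s$, and $s$ itself is used. Hence exactly $s-(\ell-1)=h$ such values lie in $\sigma$, and they are the $h$ largest values of $\sigma$. Standardizing $\sigma$ therefore shows that the number of admissible suffixes is $B(N,h)$, and Lemma~\ref{lem:catalan-triangle} gives the closed form. I expect the main care point to be this step: confirming that the $h$ values above $n-s$ are precisely the $h$ largest entries of $\sigma$, and that no other cross-pattern constraint survives.
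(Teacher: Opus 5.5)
Your proposal is correct and follows essentially the same route as the paper. You use the same forced-prefix analysis at length $m+1$ for the dead and fully increasing states, and the same reduction of terminal-ascent states to $123$-avoiding suffixes whose $h=s-\ell+1$ largest values appear in decreasing order, counted by $B(N,h)$. Your explicit appeal to Proposition~\ref{prop:active-sites} to pin down $h(\pi)=\ell$ and $d(\pi)=\ell-1$ only makes precise what the paper leaves implicit. One small wording point: the prefix may contain several ascents, not just $\pi_{\ell-1}<\pi_\ell$, but all of them end at $n-r$, so your mex argument rules them all out.
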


\begin{proof}
Throughout the proof,
\[
n=m+1.
\]
At this level the high values are
\[
2,3,\ldots,m+1,
\]
so $1$ is the unique value that is not high.

First consider the dead state. For $s_0$ to be inactive, the first entry must be $1$. Once $1$ occurs first, all remaining entries must appear in decreasing order; otherwise an ascent among the later entries, together with the initial $1$, would form a $123$ pattern. Thus the unique dead permutation is
\[
1,m+1,m,\ldots,2,
\]
and
\[
v_m(\dagger)=1.
\]

Now suppose
\[
\omega=(d_1,\ldots,d_\ell)
\]
is fully increasing. The profile fixes the first $\ell$ entries:
\[
n-d_1,\ldots,n-d_\ell,
\]
and these values form a decreasing sequence.

Because the profile has exactly $\ell$ active sites, the following site must be inactive. The prefix still contains no ascent, so the failure cannot come from the $123$ condition. It must come from adjacency. At length $m+1$, the only non-high value is $1$, so the next entry is forced to be $1$.

Everything after this $1$ must then occur in decreasing order, since any later ascent together with $1$ would form a $123$ pattern. Therefore there is exactly one completion, and
\[
v_m(\omega)=1.
\]

Finally, suppose
\[
\omega=(d_1,\ldots,d_{\ell-1},r)
\]
is a terminal-ascent profile, and let
\[
s=d_{\ell-1}.
\]
The final two recorded values are
\[
a=n-s
\qquad\text{and}\qquad
b=n-r.
\]
Since
\[
r<s,
\]
we have
\[
a<b.
\]

Because
\[
r=
\operatorname{mex}\{d_1,\ldots,d_{\ell-1}\},
\]
every deficit smaller than $r$ has already occurred. Equivalently, every value larger than $b$ has already appeared in the recorded prefix. Hence every value remaining for the suffix is smaller than $b$.

Let $\sigma$ denote the suffix consisting of the remaining
\[
N=n-\ell=m+1-\ell
\]
values.

We claim that the full permutation avoids $123$ if and only if:

\begin{enumerate}
    \item $\sigma$ avoids $123$;
    \item the remaining values larger than $a$ occur in decreasing order
    within $\sigma$.
\end{enumerate}

The first condition is obviously necessary. For the second, suppose two suffix values
\[
a<y<z
\]
occur in the order $y,z$. Then
\[
a,y,z
\]
forms a $123$ pattern, so all suffix values larger than $a$ must occur decreasingly.

Conversely, assume both conditions. A $123$ pattern cannot lie entirely in the suffix by the first condition. It cannot use two entries from the recorded prefix followed by one suffix entry: every ascent in the recorded prefix ends at $b$, while every suffix value is smaller than $b$.

Finally, suppose a $123$ pattern uses one recorded entry followed by two suffix entries. Among the recorded entries,
\[
a=n-s
\]
is the smallest. Thus any two suffix entries capable of completing such a pattern must both exceed $a$ and occur in increasing order. This is excluded by the second condition.

It remains to count how many remaining suffix values exceed $a$.

The values larger than $a=n-s$ correspond to deficits
\[
0,1,\ldots,s-1.
\]
Among these $s$ deficits, exactly $\ell-1$ have already appeared in the recorded profile: the $\ell-2$ deficits
\[
d_1,\ldots,d_{\ell-2}
\]
together with the final deficit $r$. Hence
\[
h
=
s-(\ell-1)
=
s-\ell+1
\]
such values remain.

These are precisely the $h$ largest values of the suffix. After standardizing the suffix, we thus need a $123$-avoiding permutation of length $N$ in which the $h$ largest values occur in decreasing order. By Lemma~\ref{lem:catalan-triangle},
\[
v_m(\omega)
=
B(N,h)
=
\frac{h+1}{N+1}
\binom{2N-h}{N}.
\]
\end{proof}

\begin{cor}
\label{cor:initial-positive}
Every coordinate of $v_m$ is positive, and
\[
v_m\mathbf 1=C_{m+1}.
\]
\end{cor}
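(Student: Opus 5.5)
The corollary has two parts, positivity of every coordinate and the value of the total sum; both will be read off Theorem~\ref{thm:initial-vector}, together with the observation that the adjacency condition is vacuous at length $m+1$.

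\textbf{Positivity.} By Theorem~\ref{thm:initial-vector}, the dead state and every fully increasing profile have multiplicity exactly $1$. For a terminal-ascent profile $\omega=(d_1,\ldots,d_{\ell-1},r)$ the multiplicity is
\[
B(N,h)=\frac{h+1}{N+1}\binom{2N-h}{N},
\qquad N=m+1-\ell,\quad h=s-\ell+1,\quad s=d_{\ell-1}.
\]
So I need to check $0\le h\le N$, which makes the binomial coefficient nonzero because $2N-h\ge N$.

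\begin{itemize}
\item Lower bound: $d_1<\cdots<d_{\ell-1}=s$ are $\ell-1$ distinct nonnegative integers, so $s\ge \ell-2$ and $h\ge -1$. To get $h\ge 0$ I use the mex condition. Since $r<s$ and $r\notin\{d_1,\ldots,d_{\ell-1}\}$, the set $\{0,\ldots,s\}$ contains the $\ell$ distinct values $d_1,\ldots,d_{\ell-1},r$. Hence $s+1\ge \ell$, that is, $h\ge 0$.
\item Upper bound: $s\le m-1$ gives $h\le m-\ell<N$.
\end{itemize}

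Thus $B(N,h)\ge \frac{1}{N+1}\binom{2N-h}{N}>0$. Alternatively, positivity follows directly from realizability. Theorem~\ref{thm:realizable-profiles} says every state of $\Omega_m$ occurs in $\mathcal C_{m+1,m}$, so each coordinate of $u_{m+1,m}$ is at least $1$. I will give this one-line argument as the main proof and mention the explicit bound only as a check.

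\textbf{The sum.} Every $\pi\in\mathcal C_{m+1,m}$ has exactly one active profile: it is either $\dagger$ or a nondead profile, and the latter is realizable by definition. The fibers $\{\pi:\lambda_m(\pi)=\omega\}$, $\omega\in\Omega_m$, therefore partition $\mathcal C_{m+1,m}$, and
\[
v_m\mathbf 1=\sum_{\omega\in\Omega_m}v_m(\omega)=|\mathcal C_{m+1,m}|.
\]
At length $m+1$ any two values differ by at most $m$, so $\mathcal C_{m+1,m}=\operatorname{Av}_{m+1}(123)$, as noted before Definition~\ref{def:initial-vector}. Its cardinality is $C_{m+1}$ by the classical result~\cite{SimionSchmidt1985}.

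The only point needing care is that $\lambda_m$ is well defined at $n=m+1$, which requires the standing assumption $n\ge m+1$ of Proposition~\ref{prop:active-sites}; it holds with equality. No step is a real obstacle.
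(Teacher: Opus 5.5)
Your proposal is correct and takes essentially the paper's approach. The total $v_m\mathbf 1=C_{m+1}$ comes, exactly as in the paper, from partitioning $\mathcal C_{m+1,m}=\operatorname{Av}_{m+1}(123)$ into profile fibers. For positivity you lean on the ``already occurs in $\mathcal C_{m+1,m}$'' clause of Theorem~\ref{thm:realizable-profiles}, and also verify $0\le h<N$ for the formula in Theorem~\ref{thm:initial-vector}, whereas the paper simply cites Theorem~\ref{thm:initial-vector}; this difference is cosmetic.
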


\begin{proof}
Positivity follows from Theorem~\ref{thm:initial-vector}. Since the adjacency condition is vacuous at length $m+1$, the states partition all $123$-avoiding permutations of length $m+1$. Their total number is, accordingly, the Catalan number
\[
C_{m+1}.
\]
\end{proof}

We may now combine the transition rule with the explicit initial vector.

\begin{theorem}[Finite-state enumeration]
\label{thm:finite-state-enumeration}
For every fixed $m\ge1$ and every $n\ge m+1$,
\[
u_{n,m}
=
v_m T_m^{\,n-m-1}.
\]
If
\[
a_{n,m}=|\mathcal C_{n,m}|
\]
and $\mathbf 1$ denotes the column vector of ones indexed by $\Omega_m$, then
\[
a_{n,m}
=
v_m T_m^{\,n-m-1}\mathbf 1.
\]
\end{theorem}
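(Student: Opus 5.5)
This statement is essentially a packaging of results already established, so the plan is to combine Theorem~\ref{thm:transfer-relation} with Definition~\ref{def:initial-vector}, and then sum over states. First, by Theorem~\ref{thm:transfer-relation}, for every $n\ge m+1$ we have $u_{n,m}=u_{m+1,m}T_m^{\,n-m-1}$. Since $v_m$ was defined to be $u_{m+1,m}$, substitution gives the first identity, $u_{n,m}=v_mT_m^{\,n-m-1}$. The base case $n=m+1$ is the empty product $T_m^{0}=I$ and holds trivially. Theorem~\ref{thm:initial-vector} is not logically needed here; it only makes $v_m$ explicit.

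For the count $a_{n,m}$, I would argue that the profile map $\lambda_m$ assigns to every $\pi\in\mathcal C_{n,m}$ exactly one element of $\Omega_m$.
\begin{enumerate}
\item If $\pi$ has no active sites, then $\lambda_m(\pi)=\dagger$.
\item Otherwise $\lambda_m(\pi)$ is a nondead profile. By definition it is realizable, so it lies in $\Omega_m$.
\end{enumerate}
Realizability of the nondead profile is exactly what Definition~\ref{def:candidate-profile} and Definition~\ref{def:omega} record, using the length hypothesis $n\ge m+1$ under which profiles were defined. Hence the fibers $\{\pi:\lambda_m(\pi)=\omega\}$, for $\omega\in\Omega_m$, partition $\mathcal C_{n,m}$. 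Summing the coordinates of $u_{n,m}$ therefore gives $a_{n,m}=u_{n,m}\mathbf 1=v_mT_m^{\,n-m-1}\mathbf 1$.

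The only point needing care is that the transfer relation counts each child exactly once. That is where the unique-parent property (Lemma~\ref{lem:max-deletion}) and profile sufficiency (Proposition~\ref{prop:profile-sufficiency}) enter. A second point is that the dead state contributes no children: Proposition~\ref{prop:active-sites} shows a dead permutation has no active sites, which matches the zero row of $T_m$. All of this is already contained in Theorem~\ref{thm:transfer-relation}, so I expect no genuine obstacle. The mildest subtlety is confirming that every state reached at level $n$ is in $\Omega_m$, so that the vector indexing is consistent. This follows from Corollary~\ref{cor:profile-closure}, or directly from the definition of realizability.
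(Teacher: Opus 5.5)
Your proposal is correct and matches the paper, which gives no separate proof of this theorem: it is obtained by substituting $v_m=u_{m+1,m}$ into Theorem~\ref{thm:transfer-relation} and summing coordinates, since the active-profile fibers partition $\mathcal C_{n,m}$. The one step you pass over quickly --- that an actual nondead profile is a candidate profile and hence lies in $\Omega_m$ --- is supplied by Corollary~\ref{cor:bounded-active} together with the observation after Definition~\ref{def:active-profile} that all recorded deficits are distinct, lie in $\{0,\ldots,m-1\}$, and increase along the first $\ell-1$ coordinates.
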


Theorem~\ref{thm:finite-state-enumeration} completes the finite-state reduction. For fixed $m$, the original permutation problem is now described entirely by three finite objects:

\[
\Omega_m,
\qquad
T_m,
\qquad
v_m.
\]

The state space $\Omega_m$ records exactly the realizable extension types, the matrix $T_m$ records how those types change when a new maximum is inserted, and the vector $v_m$ records how many permutations occupy each state when the stable insertion process begins at length $m+1$.

\begin{cor}[Rational generating function]
\label{cor:rational-gen}
For every fixed $m\ge1$, let 
\[
A_m(z) = \sum_{n\ge0}a_{n,m}z^n.
\]
With the convention $a_{0,m} = 1$, we have 
\[
A_m(z) = \sum_{n=0}^{m}C_nz^n+z^{m+1}v_m(I-zT_m)^{-1}\mathbf{1}.
\]
Consequently, $A_m(z)\in \mathbb{Q}(z)$, and the sequence $(a_{n,m})_{n\ge0}$ eventually satisfies a homogeneous linear recurrence with integer coefficients.
\end{cor}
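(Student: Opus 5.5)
The plan is to split the generating function at $n=m+1$ and handle the two ranges separately.

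\emph{Small lengths.} For $0\le n\le m$ the adjacency condition is vacuous, since any two values in $[n]$ differ by at most $n-1<m$. Hence $\mathcal C_{n,m}=\operatorname{Av}_n(123)$, and $a_{n,m}=C_n$ by the classical Catalan enumeration~\cite{SimionSchmidt1985}. For $n=0$ this matches the stated convention $a_{0,m}=1=C_0$. These terms contribute the polynomial $\sum_{n=0}^{m}C_nz^n$.

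\emph{Large lengths.} For $n\ge m+1$, Theorem~\ref{thm:finite-state-enumeration} gives $a_{n,m}=v_mT_m^{\,n-m-1}\mathbf 1$. Substituting $k=n-m-1$ yields
\[
\sum_{n\ge m+1}a_{n,m}z^n
=z^{m+1}\sum_{k\ge0}v_m(zT_m)^k\mathbf 1
=z^{m+1}v_m(I-zT_m)^{-1}\mathbf 1 .
\]
The interchange of sum and matrix product is legitimate as an identity of formal power series. This is because $\sum_{k\ge0}(zT_m)^k$ is the inverse of $I-zT_m$ in the ring of matrices over $\mathbb Q[[z]]$: the constant term of $I-zT_m$ is $I$, so it is invertible there, and the geometric series telescopes as $(I-zT_m)\sum_{k=0}^{K}(zT_m)^k=I-(zT_m)^{K+1}$. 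Adding the two ranges gives the displayed formula for $A_m(z)$.

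\emph{Rationality.} By Cramer's rule, $(I-zT_m)^{-1}=\operatorname{adj}(I-zT_m)/\det(I-zT_m)$. Here $\det(I-zT_m)=:Q(z)$ is an integer polynomial with $Q(0)=1$, and the adjugate has integer polynomial entries. Since $v_m$ has integer entries, $A_m(z)=P(z)/Q(z)$ for some $P\in\mathbb Z[z]$, so $A_m(z)\in\mathbb Q(z)$.

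\emph{Recurrence.} Write $Q(z)=1+\sum_{i=1}^{D}q_iz^i$ with $D\le|\Omega_m|$. Comparing coefficients of $z^n$ in $Q(z)A_m(z)=P(z)$ shows that for every $n>\deg P$,
\[
a_{n,m}+\sum_{i=1}^{D}q_ia_{n-i,m}=0 ,
\]
which is a homogeneous linear recurrence with integer coefficients. It holds eventually, namely from $n=\deg P+1$ onward.

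An alternative route to the recurrence uses Cayley--Hamilton. Let $\chi(x)=x^{D}+c_1x^{D-1}+\dots+c_D$ be the characteristic polynomial of $T_m$. Then $\chi(T_m)=0$, so $\sum_i c_iT_m^{\,D-i+k}=0$ for each $k\ge0$. Applying $v_m(\cdot)\mathbf 1$ to this gives the same kind of recurrence, valid for all $n\ge m+1+D$.

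I expect no real obstacle. The only points requiring care are that the small-$n$ terms are indeed Catalan numbers, including the $n=0$ convention, and the formal-power-series justification for inverting $I-zT_m$.
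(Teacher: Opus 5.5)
Your proposal is correct and follows essentially the same route as the paper: split at $n=m+1$ using the vacuity of the adjacency condition for small $n$, sum the geometric series coming from Theorem~\ref{thm:finite-state-enumeration}, then apply the adjugate formula and compare coefficients in $\det(I-zT_m)A_m(z)=P(z)$. Your formal-power-series justification and the Cayley--Hamilton variant, which gives the explicit threshold $n\ge m+1+|\Omega_m|$, are useful additions but not a different approach. One small fix: in that variant, $D$ must mean $|\Omega_m|$, not $\deg Q$. These differ here, since the zero row of the dead state makes $\deg Q<|\Omega_m|$.
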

\begin{proof}
For \(0\le n\le m+1\), the adjacency condition is vacuous, since the largest possible difference between two entries of a permutation of \([n]\) is \(n-1\le m\). Hence
\[
a_{n,m}=C_n
\qquad
(0\le n\le m+1).
\]

For \(k\ge 0\), Theorem~5 gives
\[
a_{m+1+k,m}=v_mT_m^k\mathbf{1}.
\]
Therefore
\begin{align*}
A_m(z)
&=
\sum_{n=0}^{m} C_nz^n
+
\sum_{k\ge 0}
v_mT_m^k\mathbf{1}\,z^{m+1+k} \\
&=
\sum_{n=0}^{m} C_nz^n
+
z^{m+1}v_m
\left(
\sum_{k\ge 0}(zT_m)^k
\right)
\mathbf{1} \\
&=
\sum_{n=0}^{m} C_nz^n
+
z^{m+1}v_m(I-zT_m)^{-1}\mathbf{1}.
\end{align*}

By the adjugate formula,
\[
(I-zT_m)^{-1}
=
\frac{\operatorname{adj}(I-zT_m)}
{\det(I-zT_m)}.
\]
Since \(T_m\) has integer entries, \(\operatorname{adj}(I-zT_m)\) has entries in \(\mathbb{Z}[z]\), while
\[
D_m(z):=\det(I-zT_m)
\]
belongs to \(\mathbb{Z}[z]\) and satisfies \(D_m(0)=1\). It follows that
\[
A_m(z)\in\mathbb{Q}(z).
\]

Write
\[
D_m(z)=1+d_1z+\cdots+d_rz^r.
\]
The displayed formula for \(A_m(z)\) implies that
\[
D_m(z)A_m(z)=P_m(z)
\]
for some polynomial \(P_m(z)\in\mathbb{Z}[z]\). Comparing coefficients of \(z^n\) for \(n>\deg P_m\) gives
\[
a_{n,m}
+d_1a_{n-1,m}
+\cdots
+d_ra_{n-r,m}
=0.
\]
Thus \((a_{n,m})_{n\ge0}\) eventually satisfies a homogeneous constant-coefficient linear recurrence with integer coefficients.
\end{proof}

Nothing in these objects depends on the later permutation length $n$. Increasing $n$ simply means applying the same transition matrix again. This is the main enumerative payoff of the structural work developed so far. The next section illustrates this finite-dimensional description in the cases \(m=1,2,3\), while Section~\ref{sec:spectral} studies the resulting asymptotic growth.

\section{Concrete enumerations}
\label{sec:concrete}

The preceding sections give a complete finite-state procedure for every fixed adjacency radius $m$. We now carry out that procedure for the first few values of $m$. These examples serve two purposes. First, they make the abstract state construction concrete: one can see explicitly how active profiles become vertices, how insertion becomes a transition, and how the transfer matrix counts permutations. Second, they illustrate concretely the general rationality and recurrence result established in Corollary~\ref{cor:rational-gen}. In both nontrivial examples, the transfer matrix can be simplified to an explicit rational generating function and an eventual constant-coefficient linear recurrence.

We begin with $m=1$, where the family is essentially rigid, and then treat $m=2$ in detail. The case $m=3$ follows the same procedure but already produces a substantially larger state system.

\subsection{The case $m=1$}
\label{subsec:m1}

The smallest adjacency radius gives a useful boundary case. If
\[
|\pi_{i+1}-\pi_i|\le1
\]
for every $i$, then consecutive entries must differ by exactly $1$, since the entries of a permutation are distinct. Thus the permutation must traverse the values $1,\ldots,n$ consecutively in one of the two possible directions.

\begin{prop}[The rigid case $m=1$]
\label{prop:m1}
For every $n\ge3$,
\[
a_{n,1}=1.
\]
\end{prop}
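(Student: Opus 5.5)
The plan is to argue directly from the observation made just before the statement: when $m=1$, consecutive entries differ by exactly $1$. This forces the permutation to be monotone, and $123$-avoidance then selects the decreasing one.

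First, I will show that any $\pi\in S_n$ with $|\pi_{i+1}-\pi_i|\le 1$ for all $i$ is either the identity or $\delta_n$. Since entries are distinct, each step is $\pm1$. Suppose some step changes sign, say $\pi_{i+1}=\pi_i+1$ and $\pi_{i+2}=\pi_{i+1}-1$. Then $\pi_{i+2}=\pi_i$, which contradicts injectivity, and the opposite sign change fails the same way. So every step has the same sign. The sequence is then $\pi_1,\pi_1\pm1,\ldots$, and since it must cover all of $[n]$, it is either $1,2,\ldots,n$ or $n,n-1,\ldots,1$.

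Alternatively, I could invoke Theorem~\ref{thm:localization} with $m=1$. This gives $|\pi_i-(n+1-i)|\le1$, but that alone does not pin down $\pi$, so the direct argument above is cleaner and is the one I will use.

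Second, for $n\ge3$ the identity contains $123$ via its first three entries, so it is excluded. The decreasing permutation $\delta_n$ avoids $123$ and satisfies the adjacency bound, so $\mathcal C_{n,1}=\{\delta_n\}$ and $a_{n,1}=1$.

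As a consistency check, this agrees with the finite-state description. For $m=1$ we have $|\Omega_1|=2^2-1-1=2$: the dead state and the profile $(0)$. The transition is $\Phi_{1,0}((0))=\operatorname{trim}_1(0,1)=(0)$, so $T_1$ fixes $(0)$. The initial vector $v_1$ at $n=2$ puts $1$ on each state, namely $21$ in state $(0)$ and $12$ in the dead state. Hence $a_{n,1}=v_1T_1^{\,n-2}\mathbf 1=1$ for $n\ge3$.

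No step is a real obstacle. The only care needed is the no-sign-change argument and the restriction $n\ge3$, which is what excludes the identity.
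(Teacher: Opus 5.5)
Your proposal is correct and follows the paper's argument: with $m=1$ the adjacency condition forces the permutation to be $12\cdots n$ or $n(n-1)\cdots 1$, and for $n\ge 3$ the $123$ condition leaves only the decreasing one. Your sign-change argument supplies the monotonicity justification that the paper only asserts, and your transfer-matrix check with $|\Omega_1|=2$ and $\Phi_{1,0}((0))=(0)$ is a correct extra confirmation.
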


\begin{proof}
The only permutations satisfying the adjacency condition are
\[
1,2,\ldots,n
\qquad\text{and}\qquad
n,n-1,\ldots,1.
\]
For $n\ge3$, the increasing permutation contains a $123$ pattern, whereas the decreasing permutation avoids $123$. Hence exactly one permutation belongs to $\mathcal C_{n,1}$.
\end{proof}

Thus $m=1$ leaves no asymptotic freedom at all. The first genuinely nontrivial finite-state system occurs at $m=2$.

\subsection{The case $m=2$}
\label{subsec:m2}

By Corollary~\ref{cor:state-count},
\[
|\Omega_2|
=
2^3-2-1
=
5.
\]
The realizable states are
\[
\Omega_2
=
\{
\dagger,\,
(0),\,
(1),\,
(0,1),\,
(1,0)
\}.
\]

This is small enough that every transition can be computed directly from the profile maps. For example, consider the state $(0)$. Front insertion adds a new deficit $0$ and increases the old deficit by $1$, giving
\[
(0)\longmapsto (0,1).
\]
Thus
\[
\Phi_{2,0}(0)=(0,1).
\]

For the state $(1)$, the raw transformed profile is
\[
(0,2).
\]
The deficit $2=m$ is no longer high enough to remain relevant, so trimming gives
\[
\Phi_{2,0}(1)=(0).
\]

The state $(1,0)$ has two active sites. Front insertion produces
\[
(0,2,1),
\]
which trims at the first $2$ to give
\[
(0).
\]
Insertion at the second active site gives
\[
(2,0),
\]
whose first coordinate is already equal to $m$, so the entire profile is trimmed away:
\[
\Phi_{2,1}(1,0)=\dagger.
\]

Carrying this out for all five states gives the complete transition rule.

\begin{prop}[Transitions for $m=2$]
\label{prop:m2-transitions}
The profile transitions are
\[
\begin{array}{c|c}
\text{parent state} & \text{child states}\\
\hline
\dagger & \varnothing\\
(0) & (0,1)\\
(1) & (0)\\
(0,1) & (0,1),\ (1,0)\\
(1,0) & \dagger,\ (0).
\end{array}
\]
\end{prop}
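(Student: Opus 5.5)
The table can be verified by direct computation from Definition~\ref{def:phi}, using Proposition~\ref{prop:zero-one} to fix the number of children of each state. A parent state of length $\ell$ has exactly $\ell$ children, given by $\Phi_{2,0},\ldots,\Phi_{2,\ell-1}$. The dead state's row is zero by Definition~\ref{def:transfer-matrix}, so $\dagger$ has no children. The remaining work is to evaluate the maps on the four nondead states listed in $\Omega_2$. By Corollary~\ref{cor:profile-closure}, every output is guaranteed to lie in $\Omega_2$, so no separate realizability check is needed.

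First the length-one states, each of which has only front insertion. For $(0)$, the raw list is $(0,1)$. It contains no entry equal to $m=2$, so $\Phi_{2,0}(0)=(0,1)$. For $(1)$, the raw list is $(0,2)$, which trims at the $2$ to give $(0)$; this was already computed in the text.

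Next the length-two states, which have sites $j=0,1$. For $(0,1)$, front insertion gives $\operatorname{trim}_2(0,1,2)=(0,1)$. Insertion at $j=1$ gives $\operatorname{trim}_2(1,0)=(1,0)$. For $(1,0)$, front insertion gives $\operatorname{trim}_2(0,2,1)=(0)$. Insertion at $j=1$ gives $\operatorname{trim}_2(2,0)=\dagger$, since the first entry already equals $m$. These four values are exactly the entries of the table.

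There is no real obstacle here. The one point requiring care is applying $\operatorname{trim}_m$ correctly: trimming cuts at the first occurrence of $m$, not at every occurrence, and for internal insertions the list is truncated after the new $0$, as in Definition~\ref{def:phi}. As a sanity check, I will confirm that the children of each parent are distinct, as Proposition~\ref{prop:zero-one} requires.
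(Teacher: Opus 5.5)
Your proposal is correct and takes the same route as the paper, which also obtains the table by evaluating the maps $\Phi_{2,j}$ on each state of $\Omega_2$; it works out $(0)$, $(1)$ and $(1,0)$ explicitly and treats the rest as routine. All of your computations check out, including the trimmings of $(0,1,2)$, $(0,2,1)$ and $(2,0)$.
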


Ordering the states as
\[
\dagger,\ (0),\ (1),\ (0,1),\ (1,0),
\]
the transfer matrix is thus
\[
T_2
=
\begin{pmatrix}
0&0&0&0&0\\
0&0&0&1&0\\
0&1&0&0&0\\
0&0&0&1&1\\
1&1&0&0&0
\end{pmatrix}.
\]

The stable insertion process begins at length
\[
m+1=3.
\]
At this level, each of the five states occurs exactly once, so
\[
v_2
=
u_{3,2}
=
\begin{pmatrix}
1&1&1&1&1
\end{pmatrix}.
\]

The finite-state enumeration theorem now gives
\[
u_{n,2}
=
v_2T_2^{\,n-3},
\qquad n\ge3,
\]
and hence
\[
a_{n,2}
=
v_2T_2^{\,n-3}\mathbf 1.
\]

The first values are
\[
1,1,2,5,6,8,13,20,28,41,61,89,130,191,280,410,\ldots,
\]
where the first term corresponds to $n=0$.

It is worth pausing here. Nothing in this sequence was guessed from its first few terms. Once $\Omega_2$, $T_2$, and $v_2$ are known, every later value is forced by repeated application of the same five-state transition system. The matrix description also allows us to package the entire sequence into a generating function via the standard transfer-matrix method~\cite{Stanley2012}.

\begin{prop}[Generating function for $m=2$]
\label{prop:gf-m2}
Let
\[
A_2(z)
=
\sum_{n\ge0}a_{n,2}z^n.
\]
Then
\[
A_2(z)
=
1+z+2z^2
+
z^3v_2(I-zT_2)^{-1}\mathbf 1,
\]
and after simplification,
\[
A_2(z)
=
\frac{1+z^2+2z^3+z^7}
     {1-z-z^3}.
\]
\end{prop}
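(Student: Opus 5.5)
The first displayed identity is the case $m=2$ of Corollary~\ref{cor:rational-gen}. There the polynomial part is $\sum_{n=0}^{2}C_nz^n=1+z+2z^2$. The matrix part uses $T_2$ from Proposition~\ref{prop:m2-transitions} and the initial vector $v_2=(1,1,1,1,1)$. So the real content is the simplification. I will not invert $I-zT_2$ entrywise. Instead I will set $x=(I-zT_2)^{-1}\mathbf 1$, the column vector whose $\omega$-coordinate is the generating function of the total descendants of state $\omega$. I will then solve the linear system $x=\mathbf 1+zT_2x$, which reads off directly from the transition table.

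With coordinates indexed by $\dagger,(0),(1),(0,1),(1,0)$, the system is
\begin{align*}
x_\dagger&=1, & x_{(0)}&=1+z\,x_{(0,1)}, & x_{(1)}&=1+z\,x_{(0)},\\
x_{(0,1)}&=1+z\bigl(x_{(0,1)}+x_{(1,0)}\bigr), & x_{(1,0)}&=1+z\bigl(x_\dagger+x_{(0)}\bigr). & &
\end{align*}
I will eliminate toward $x_{(0,1)}$, the only self-looping state.

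\begin{enumerate}
\item Substitute $x_{(1,0)}=1+z+z\,x_{(0)}$.
\item Substitute $x_{(0)}=1+z\,x_{(0,1)}$.
\item The $(0,1)$ equation then becomes $(1-z-z^3)\,x_{(0,1)}=1+z+2z^2$. This is where the denominator $1-z-z^3$ enters.
\item Back-substitute to express the remaining coordinates.
\end{enumerate}

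Since $v_2=\mathbf 1^{\mathsf T}$, the quantity needed is the coordinate sum. This gives
\[
v_2x=4+3z+\frac{(1+z+2z^2)^2}{1-z-z^3}.
\]
Multiplying by $z^3$, adding $1+z+2z^2$, and bringing everything over the common denominator $1-z-z^3$ should yield the numerator $1+z^2+2z^3+z^7$.

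The main obstacle is only this final bookkeeping: expanding $(1+z+2z^2)^2z^3$ and the products with $1-z-z^3$ and checking the cancellations. As an independent check, I will expand $(1+z^2+2z^3+z^7)/(1-z-z^3)$ as a power series. Equivalently, I will use the recurrence $a_n=a_{n-1}+a_{n-3}$, which holds for $n\ge8$, with the polynomial numerator fixing the initial terms. I will confirm that this reproduces the listed values $1,1,2,5,6,8,13,20,28,\ldots$ obtained from $v_2T_2^{\,n-3}\mathbf 1$.
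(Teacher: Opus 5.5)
Your proposal is correct and matches the paper's route: the first identity is Corollary~\ref{cor:rational-gen} at $m=2$, and solving $x=\mathbf 1+zT_2x$ is simply a way of evaluating $(I-zT_2)^{-1}\mathbf 1$ in the standard transfer-matrix computation, which the paper states without writing out. Your intermediate results $x_{(0,1)}=(1+z+2z^2)/(1-z-z^3)$ and $v_2x=4+3z+(1+z+2z^2)^2/(1-z-z^3)$ are right, and the remaining bookkeeping does close, since $(1+z+2z^2+4z^3+3z^4)(1-z-z^3)+z^3(1+z+2z^2)^2=1+z^2+2z^3+z^7$.
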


The denominator immediately yields a recurrence.

\begin{cor}[Recurrence for $m=2$]
\label{cor:recurrence-m2}
For $n\ge8$,
\[
a_{n,2}
=
a_{n-1,2}
+
a_{n-3,2}.
\]
\end{cor}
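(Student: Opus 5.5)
The recurrence will come straight out of the rational form in Proposition~\ref{prop:gf-m2}, following the general mechanism in the proof of Corollary~\ref{cor:rational-gen}. Write
\[
(1-z-z^3)A_2(z)=1+z^2+2z^3+z^7 .
\]
For every $n$, the coefficient of $z^n$ on the left is $a_{n,2}-a_{n-1,2}-a_{n-3,2}$, with the convention $a_{k,2}=0$ for $k<0$. The numerator has degree $7$, so its coefficient of $z^n$ vanishes for every $n\ge 8$. This gives $a_{n,2}=a_{n-1,2}+a_{n-3,2}$ for all $n\ge8$.

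The only genuine content is the simplification asserted in Proposition~\ref{prop:gf-m2}. To make the argument self-contained, I would verify that step directly rather than rely on it.

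\begin{itemize}
\item \textbf{Characteristic polynomial.} From the explicit $T_2$, compute $\det(I-zT_2)$. The dead state has a zero row, so it contributes a trivial factor and we may pass to the $4\times4$ block on $(0),(1),(0,1),(1,0)$. The state $(1)$ has no incoming edges from this block: its only predecessor would need a transition into $(1)$, and the column of $(1)$ is zero. So that block also factors off.
\item \textbf{Remaining block.} What remains is the $3\times3$ system on $(0),(0,1),(1,0)$, with edges
\[
(0)\to(0,1),\qquad (0,1)\to(0,1),\qquad (0,1)\to(1,0),\qquad (1,0)\to(0).
\]
Its determinant is $1-z-z^3$: one factor $z$ comes from the loop, and one factor $z^3$ comes from the $3$-cycle.
\item \textbf{Numerator.} Computing $z^3v_2(I-zT_2)^{-1}\mathbf 1$ and adding $1+z+2z^2$ should give the stated numerator.
\end{itemize}

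The obstacle I expect is this last rational-function bookkeeping, specifically confirming that the numerator really has degree $7$. That degree is what fixes the threshold $n\ge8$ rather than some other starting index.

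As an independent check, which also suffices on its own, use the transfer relation $u_{n+1,2}=u_{n,2}T_2$ coordinatewise. Writing $x_n=u_{n,2}((0))$, $y_n$ for state $(0,1)$, $w_n$ for state $(1,0)$, $p_n$ for state $(1)$, and $q_n$ for the dead state, we get, for $n\ge3$:
\begin{align*}
x_{n+1}&=w_n+p_n, & y_{n+1}&=x_n+y_n, & w_{n+1}&=y_n,\\
p_{n+1}&=0, & q_{n+1}&=w_n. &&
\end{align*}
Hence $p_n=0$ for $n\ge4$. This gives:
\begin{itemize}
\item $a_n=x_n+y_n+w_n+q_n=y_{n+1}+w_{n+1}+w_{n-1}$;
\item $y$ satisfies $y_{n+1}=y_n+y_{n-2}$ once $p$ has vanished, since $x_n=w_{n-1}=y_{n-2}$.
\end{itemize}
Then $a_n$ is a fixed linear combination of shifts of $y$, so it inherits the same recurrence for all $n$ beyond a small threshold. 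Finally, I would check the boundary cases against the listed values. For example, $a_8=a_7+a_5=20+8=28$ holds, while $a_7=13+5=18\ne20$ fails, which confirms that $8$ is the sharp starting point.
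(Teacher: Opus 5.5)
Your main argument is correct and follows the paper's route: the recurrence is read off from $(1-z-z^3)A_2(z)=1+z^2+2z^3+z^7$ by comparing coefficients of $z^n$ for $n\ge8$. The numerator bookkeeping you flag is not needed, because once $\det(I-zT_2)=1-z-z^3$ is known, every entry of $\operatorname{adj}(I-zT_2)$ has degree at most $4$, which already forces the numerator to have degree at most $7$. Your independent check has two slips. The identity should read $a_n=y_{n+1}+w_n+w_{n-1}$ (equivalently $a_n=y_n+y_{n-1}+2y_{n-2}$ for $n\ge5$, which together with $y_k=y_{k-1}+y_{k-3}$ for $k\ge6$ gives the recurrence for all $n\ge8$). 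The failure at $n=7$ is $a_{6,2}+a_{4,2}=13+6=19\ne20$, because $a_{4,2}=6$, not $5$.
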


Thus a five-state insertion process ultimately collapses to a recurrence involving only two earlier terms. The case $m=2$ illustrates the entire finite-state method in its simplest nontrivial form:
\[
\text{profiles}
\longrightarrow
\text{transitions}
\longrightarrow
T_2
\longrightarrow
\text{exact counts}
\longrightarrow
A_2(z)
\longrightarrow
\text{recurrence}.
\]
For $m=3$, the same procedure applies, but the state space is already large enough that it is more efficient to present the resulting transition table than to derive every edge individually.

\subsection{The case $m=3$}
\label{subsec:m3}

For $m=3$, Corollary~\ref{cor:state-count} gives
\[
|\Omega_3|
=
2^4-3-1
=
12.
\]
We order the states as
\[
\begin{aligned}
\Omega_3
=
\{&
\dagger,\,
(0),\,
(1),\,
(2),\,
(0,1),\,
(0,2),\\
&
(1,0),\,
(1,2),\,
(2,0),\,
(0,1,2),\,
(0,2,1),\,
(1,2,0)
\}.
\end{aligned}
\]

Applying the transition maps from Definition~\ref{def:phi} gives the following table.

\begin{prop}[Transitions for $m=3$]
\label{prop:m3-transitions}
The child profiles, listed in active-site order, are
\[
\begin{array}{c|l}
\text{parent state} & \text{child states}\\
\hline
\dagger
    & \varnothing\\
(0)
    & (0,1)\\
(1)
    & (0,2)\\
(2)
    & (0)\\
(0,1)
    & (0,1,2),\ (1,0)\\
(0,2)
    & (0,1),\ (1,0)\\
(1,0)
    & (0,2,1),\ (2,0)\\
(1,2)
    & (0,2),\ (2,0)\\
(2,0)
    & (0),\ \dagger\\
(0,1,2)
    & (0,1,2),\ (1,0),\ (1,2,0)\\
(0,2,1)
    & (0,1),\ (1,0),\ (1)\\
(1,2,0)
    & (0,2),\ (2,0),\ (2).
\end{array}
\]
\end{prop}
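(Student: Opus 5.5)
The plan is to verify the table row by row, directly from Definition~\ref{def:phi}, using $m=3$. In particular, $\operatorname{trim}_3$ cuts at the first coordinate equal to $3$. By Proposition~\ref{prop:zero-one}, a state of length $\ell$ has exactly $\ell$ children, indexed by $j=0,\dots,\ell-1$, so each row has the right number of entries, and we only need to compute each $\Phi_{3,j}$. The dead state has no active sites, which gives the empty row.

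For each nondead $\omega=(d_1,\dots,d_\ell)$, we first compute the front child $\operatorname{trim}_3(0,d_1+1,\dots,d_\ell+1)$. We then compute the internal children $\operatorname{trim}_3(d_1+1,\dots,d_j+1,0)$ for $1\le j\le \ell-1$. Sample checks:
\begin{itemize}
\item $(1)\mapsto(0,2)$: no trimming occurs.
\item $(2)\mapsto\operatorname{trim}_3(0,3)=(0)$.
\item $(0,2)$: the front child is $\operatorname{trim}_3(0,1,3)=(0,1)$, and $j=1$ gives $(1,0)$.
\item $(1,0)$: the front child is $(0,2,1)$, and $j=1$ gives $(2,0)$.
\item $(1,2)$: the front child trims $(0,2,3)$ to $(0,2)$, and $j=1$ gives $(2,0)$.
\item $(2,0)$: the front child trims $(0,3,1)$ to $(0)$, and $j=1$ gives $\operatorname{trim}_3(3,0)=\dagger$.
\item $(0,1,2)$: the children are $\operatorname{trim}_3(0,1,2,3)=(0,1,2)$, then $(1,0)$, then $(1,2,0)$.
\item $(0,2,1)$: the children are $\operatorname{trim}_3(0,1,3,2)=(0,1)$, then $(1,0)$, then $\operatorname{trim}_3(1,3,0)=(1)$.
\item $(1,2,0)$: the children are $\operatorname{trim}_3(0,2,3,1)=(0,2)$, then $(2,0)$, then $\operatorname{trim}_3(2,3,0)=(2)$.
\end{itemize}
The rows for $(0)$ and $(0,1)$ are analogous and need no trimming except the obvious one.

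As a sanity check, we confirm that the twelve listed states are exactly those counted in Corollary~\ref{cor:profiles-fixed-length}: $3$ of length $1$, $5$ of length $2$, and $3$ of length $3$. We also check that every child lands in this list, as guaranteed by Corollary~\ref{cor:profile-closure}. This confirms the realizability criterion of Theorem~\ref{thm:realizable-profiles} for the terminal-ascent states $(1,0)$, $(2,0)$, $(0,2,1)$ and $(1,2,0)$.

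The only real obstacle is bookkeeping: applying the trimming rule at the first coordinate equal to $m$ rather than discarding only that coordinate. This matters in rows such as $(0,2,1)$ and $(1,2,0)$, where the $3$ sits in the middle and the entries after it must also be dropped. Since Proposition~\ref{prop:profile-sufficiency} already identifies $\Phi_{3,j}(\omega)$ with the actual child profile, no further argument beyond these computations is needed.
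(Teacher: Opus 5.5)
Your proposal is correct and is the same approach as the paper, which obtains the table directly by applying the maps $\Phi_{3,j}$ of Definition~\ref{def:phi} with $\operatorname{trim}_3$ (the paper invokes Proposition~\ref{prop:profile-sufficiency} only implicitly). Every computation you list checks out, and the rows for $(0)$ and $(0,1)$ in fact involve no trimming at all.
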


The matrix $T_3$ is the $12\times12$ $0$--$1$ matrix determined by this table.

At the initial level
\[
m+1=4,
\]
the explicit initial-vector formula gives
\[
v_3
=
u_{4,3}
=
(1, 1, 1, 1, 1, 1, 2, 1, 2, 1, 1, 1)
\]
As required,
\[
v_3\mathbf 1
=
14
=
C_4.
\]

Unlike the case $m=2$, the initial multiplicities are no longer all equal to $1$. The states $(1,0)$ and $(2,0)$ each occur twice. Thus $m=3$ is the first example in which the nontrivial part of the initial-vector formula from Theorem~\ref{thm:initial-vector} is genuinely needed.

The finite-state formula gives
\[
a_{n,3}
=
v_3T_3^{\,n-4}\mathbf 1,
\qquad n\ge4.
\]

The resulting counting sequence begins
\[
\begin{aligned}
1,\,
1,\,
2,\,
5,\,
14,\,
26,\,
48,\,
91,\,
188,\,
379,\,
755,\,
1484,\,
2949,\,
5870,\,
11699,\,
23248,\,
46201,\,
91826,\ldots
\end{aligned}
\]
with the first term corresponding to $n=0$. Again, the initial terms agree with the Catalan numbers until the adjacency restriction first becomes nonvacuous. Since $m=3$, every permutation of length at most $4$ automatically satisfies the adjacency bound, so
\[
a_{n,3}=C_n,
\qquad
0\le n\le4.
\]
Beginning at $n=5$, the bounded-adjacency condition starts removing $123$-avoiding permutations. Again, the corresponding generating function is rational.

\begin{prop}[Generating function for $m=3$]
\label{prop:gf-m3}
Let
\[
A_3(z)
=
\sum_{n\ge0}a_{n,3}z^n.
\]
Then
\[
A_3(z)
=
1+z+2z^2+5z^3
+
z^4v_3(I-zT_3)^{-1}\mathbf 1.
\]
After exact simplification,
\[
A_3(z)
=
\frac{P_3(z)}{Q_3(z)},
\]
where
\[
\begin{aligned}
P_3(z)
={}&
1
+2z^3
+4z^4
+2z^5
-z^7
-2z^8
-2z^9\\
&\quad
-z^{10}
-z^{11}
+z^{15},
\end{aligned}
\]
and
\[
Q_3(z)
=
1-z-z^2-3z^4-2z^5+z^7+2z^9.
\]
\end{prop}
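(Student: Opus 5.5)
The first formula for \(A_3(z)\) follows from Corollary~\ref{cor:rational-gen} with \(m=3\). It uses \(a_{n,3}=C_n\) for \(0\le n\le 3\) and Theorem~\ref{thm:finite-state-enumeration} for \(n\ge4\). The real content is therefore the closed form \(P_3/Q_3\). My plan is to compute \(v_3(I-zT_3)^{-1}\mathbf 1\) exactly by solving a linear system rather than inverting a \(12\times12\) matrix symbolically.

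Write \(F_\omega(z)=\sum_{k\ge0}(T_3^k\mathbf 1)_\omega z^k\) for the generating function of descendants of a single permutation in state \(\omega\). Then \(F=\mathbf 1+zT_3F\). Read row by row from the table in Proposition~\ref{prop:m3-transitions}, this becomes
\[
F_\omega=1+z\sum_{\eta\text{ child of }\omega}F_\eta,
\qquad F_\dagger=1 .
\]
The desired quantity is \(v_3F=\sum_\omega v_3(\omega)F_\omega\), with the weights \(v_3\) already given.

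I would solve this system by elimination along the structure of the transition graph. The leaf-like states are expressed directly:
\[
F_{(2,0)}=1+z(F_{(0)}+1),\quad F_{(2)}=1+zF_{(0)},\quad F_{(0)}=1+zF_{(0,1)},\quad F_{(1)}=1+zF_{(0,2)} .
\]
Substituting these into the equations for \((1,0),(1,2),(0,2),(0,1),(0,2,1),(1,2,0)\) and \((0,1,2)\) leaves a small coupled system. The core variables are essentially \(F_{(0,1)}\), \(F_{(0,2)}\), \(F_{(1,0)}\) and \(F_{(0,1,2)}\); note that \((0,1,2)\) has a self-loop. I would eliminate down to one unknown, solve, and back-substitute. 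The common denominator should be \(\det(I-zT_3)\), up to cancellation.

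Next I would form \(z^4v_3F+1+z+2z^2+5z^3\), put it over the denominator, and cancel any common factors. The result should be checked against \(Q_3(z)=1-z-z^2-3z^4-2z^5+z^7+2z^9\), and the numerator against \(P_3\).

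As an independent check, multiplying the listed initial terms \(1,1,2,5,14,26,48,\dots\) by \(Q_3\) should produce \(P_3\) through degree \(17\). In particular, the coefficients of \(z^{12}\) through \(z^{14}\) and \(z^{16},z^{17}\) should vanish, and the coefficient of \(z^{15}\) should be \(1\).

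The main obstacle is purely computational: the elimination must be carried out without sign or bookkeeping errors. There is also the question of whether \(\det(I-zT_3)\) reduces exactly to degree \(9\) or only after cancellation. Since both sides are rational with known denominators, a rigorous shortcut is available. Once \(D(z)=\det(I-zT_3)\) is computed, \(D(z)A_3(z)\) is a polynomial of degree at most \(\deg D+3\) or so. Agreement of \(A_3\) with \(P_3/Q_3\) on enough initial coefficients, computed via \(v_3T_3^k\mathbf 1\), then proves the identity. I would use that as the final verification.
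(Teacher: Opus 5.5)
Your proposal is correct and is essentially the paper's own argument: the first identity is Corollary~\ref{cor:rational-gen} at $m=3$, and the closed form is just the exact evaluation of $z^4v_3(I-zT_3)^{-1}\mathbf 1$, which your elimination carries out. Reducing further to $F_{(0,1)}$ and $F_{(1,0)}$ gives $\det(I-zT_3)=Q_3(z)$ exactly, with no cancellation needed, and numerator $P_3(z)$.

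One correction to your shortcut: $D(z)A_3(z)=D(z)(1+z+2z^2+5z^3)+z^4v_3\operatorname{adj}(I-zT_3)\mathbf 1$ can have degree up to $4+11=15$, not $\deg D+3$; here it equals $P_3$, of degree $15=\deg Q_3+6$. So once $D=Q_3$ is known, the coefficient check must run through at least $z^{15}$, which your check through $z^{17}$ does cover.
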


The denominator again produces an eventual linear recurrence.

\begin{cor}[Recurrence for $m=3$]
\label{cor:recurrence-m3}
For $n\ge16$,
\[
a_{n,3}
=
a_{n-1,3}
+
a_{n-2,3}
+
3a_{n-4,3}
+
2a_{n-5,3}
-
a_{n-7,3}
-
2a_{n-9,3}.
\]
\end{cor}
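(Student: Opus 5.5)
The plan is to read the recurrence off Proposition~\ref{prop:gf-m3}, following the same route as Corollary~\ref{cor:rational-gen}. Write $A_3(z)=P_3(z)/Q_3(z)$, so that $Q_3(z)A_3(z)=P_3(z)$. The denominator $Q_3(z)$ has constant term $1$ and degree $9$, and the numerator $P_3(z)$ has degree $15$. For every $n>15$, that is $n\ge16$, the coefficient of $z^n$ on the right-hand side vanishes. On the left-hand side, that coefficient is
\[
a_{n,3}-a_{n-1,3}-a_{n-2,3}-3a_{n-4,3}-2a_{n-5,3}+a_{n-7,3}+2a_{n-9,3},
\]
since $Q_3(z)=1-z-z^2-3z^4-2z^5+z^7+2z^9$. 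Setting this expression to zero and solving for $a_{n,3}$ gives exactly the stated recurrence. All indices are nonnegative because $n-9\ge7$.

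The real content therefore lies in Proposition~\ref{prop:gf-m3}, which I assume as stated. To guard against an arithmetic slip in $P_3$ or $Q_3$, I would also check the recurrence directly against the listed values, starting at the first allowed index $n=16$. This requires $a_{16,3}=46201$ from the terms $a_{15,3}=23248$, $a_{14,3}=11699$, $a_{12,3}=2949$, $a_{11,3}=1484$, $a_{9,3}=379$ and $a_{7,3}=91$. Indeed,
\[
23248+11699+3\cdot2949+2\cdot1484-379-2\cdot91=46201 .
\]

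The main obstacle is making sure that the threshold $n\ge16$ is sharp and consistent with the degree of the numerator. The recurrence is only guaranteed for $n>\deg P_3=15$. At $n=15$ the coefficient of $z^{15}$ in $P_3$ equals $1$, so the recurrence should fail there by exactly $1$. I would verify this as a consistency check on the stated numerator.

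If one preferred not to rely on the simplified rational form, the alternative is to use the transfer matrix directly. By Theorem~\ref{thm:finite-state-enumeration}, for $n\ge4$ we have $a_{n,3}=v_3T_3^{n-4}\mathbf 1$. Using the table in Proposition~\ref{prop:m3-transitions}, one would show that $Q_3$ corresponds to a polynomial $q$ with $v_3\,q(T_3)\,T_3^{k}\mathbf 1=0$ for $k\ge k_0$, for instance via the characteristic or minimal polynomial of $T_3$ restricted to the cyclic subspace generated by $\mathbf 1$. The threshold $n\ge16$ would then be recovered by tracking where the finitely many Catalan initial terms, for $n\le4$, and any nilpotent part (from the dead state $\dagger$, whose row is zero) stop contributing. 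This route is longer, and I would use it only as a cross-check.
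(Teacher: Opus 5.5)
Your argument is correct and follows the route the paper intends. You read the recurrence off from $Q_3(z)A_3(z)=P_3(z)$ by comparing coefficients of $z^n$ for $n>\deg P_3=15$, exactly as in the proof of Corollary~\ref{cor:rational-gen}, and your consistency checks hold: the $n=16$ instance gives $46201$, and at $n=15$ the left side equals the $z^{15}$ coefficient $1$ of $P_3$, so the threshold $n\ge 16$ is sharp.
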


The recurrence is already considerably less obvious than in the case $m=2$. This illustrates one advantage of the transfer-matrix approach. There is no need to guess a recurrence from numerical data or to discover a separate recurrence by ad hoc combinatorial arguments for every $m$. The same profile rules produce $T_m$ systematically, and exact enumeration then follows from finite linear algebra.

\section{Spectral growth and asymptotics}
\label{sec:spectral}

The transfer-matrix description developed above gives exact values of $a_{n,m}$ for every fixed $m$. We now use the same description for a different purpose: rather than asking for the exact count at a particular $n$, we ask how quickly the count grows as $n$ becomes large. More broadly, exponential growth is fundamental in the theory of permutation classes, with every proper permutation class having an exponential upper bound~\cite{MarcusTardos2004}.

By Theorem~\ref{thm:finite-state-enumeration}, for
\[
n=m+1+k,
\qquad k\ge0,
\]
we have
\[
a_{m+1+k,m}
=
v_mT_m^k\mathbf 1.
\]
Thus, once $m$ is fixed, increasing the permutation length simply means applying the same matrix $T_m$ repeatedly. The asymptotic problem is thus a question about the behavior of large powers of one fixed nonnegative matrix.

Here, eigenvalues become useful. If $r$ is an eigenvector of $T_m$ with eigenvalue $\lambda$, then
\[
T_m^kr=\lambda^kr.
\]
Different eigendirections, hence, grow at different exponential rates. The eigenvalues of largest modulus are the natural candidates to determine the eventual scale of the counting sequence.

The first goal of this section is to make that statement precise. We then ask when the stronger asymptotic
\[
a_{n,m}\sim c_m\rho_m^n
\]
holds, and finally study how the growth constant itself changes as the adjacency radius $m$ increases.

\subsection{The exponential growth rate}
\label{subsec:growth-rate}

We begin with the basic quantity that measures the largest exponential rate available to a matrix.

\begin{definition}[Spectral radius]
\label{def:spectral-radius}
Let $M$ be a square matrix with complex eigenvalues $\lambda_1,\ldots,\lambda_r$. The \emph{spectral radius} of $M$ is
\[
\rho(M)
=
\max_{1\le i\le r}|\lambda_i|.
\]
For the transfer matrix $T_m$, we write
\[
\rho_m:=\rho(T_m).
\]
\end{definition}

Before identifying $\rho_m$ with the growth rate of $a_{n,m}$, it is useful to understand the general form forced on the counting sequence by the finite-dimensional system. If $T_m$ were diagonalizable, powers of $T_m$ would be built from terms of the form $\lambda^k$. A matrix need not be diagonalizable, however. Jordan blocks introduce polynomial factors in $k$, so the most general form is a finite sum of terms
$p(k)\lambda^k$~\cite{HornJohnson2013}. The next proposition records precisely this fact for the counting sequence.

\begin{prop}[Spectral form of the counting sequence]
\label{prop:spectral-expansion}
Fix $m\ge1$. There is a finite set $\Lambda_m$ of nonzero eigenvalues of
$T_m$ and, for each $\lambda\in\Lambda_m$, a polynomial
$p_{\lambda,m}(n)$ such that
\[
a_{n,m}
=
\sum_{\lambda\in\Lambda_m}
p_{\lambda,m}(n)\lambda^n
\]
for all sufficiently large $n$.
\end{prop}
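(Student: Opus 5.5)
The approach is to use Theorem~\ref{thm:finite-state-enumeration}, which writes $a_{m+1+k,m}=v_mT_m^k\mathbf 1$ for $k\ge0$, together with the Jordan canonical form of $T_m$ over $\mathbb C$. Write $T_m=SJS^{-1}$, where $J$ is block diagonal with Jordan blocks $J_i=\lambda_i I+N_i$ and each $N_i$ is nilpotent of size $s_i$. Then
\[
a_{m+1+k,m}=(v_mS)\,J^k\,(S^{-1}\mathbf 1),
\]
and it suffices to understand the entries of $J_i^k$.

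For a block with $\lambda_i\neq0$, the binomial theorem (valid since $\lambda_iI$ and $N_i$ commute) gives
\[
J_i^k=\sum_{t=0}^{s_i-1}\binom{k}{t}\lambda_i^{k-t}N_i^t .
\]
Each entry is therefore $\lambda_i^k$ times a polynomial in $k$, because $\binom{k}{t}\lambda_i^{-t}$ is a polynomial in $k$. For a block with $\lambda_i=0$, we have $J_i^k=0$ as soon as $k\ge s_i$. Consequently, for $k\ge\max_i s_i$, which is at most $|\Omega_m|$, the zero-eigenvalue blocks contribute nothing. This is the reason the identity is only claimed for sufficiently large $n$, and it is also why $\Lambda_m$ can be restricted to the nonzero eigenvalues.

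Collecting the terms by distinct nonzero eigenvalue $\lambda$ gives $a_{m+1+k,m}=\sum_\lambda q_\lambda(k)\lambda^k$ with $q_\lambda$ polynomial. Finally, substitute $k=n-m-1$ and set $p_{\lambda,m}(n)=q_\lambda(n-m-1)\lambda^{-(m+1)}$. This is again a polynomial in $n$ because $\lambda\neq0$, and it yields the stated form. Eigenvalues whose total coefficient polynomial vanishes may simply be dropped from $\Lambda_m$.

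There is no real obstacle. The only care needed is in handling the nilpotent (zero-eigenvalue) part and the index shift. As an alternative, one could argue from Corollary~\ref{cor:rational-gen} by partial fractions of $P_m/D_m$, where the roots of $D_m(z)=\det(I-zT_m)$ are the reciprocals of the nonzero eigenvalues. The Jordan-form route is more direct, though.
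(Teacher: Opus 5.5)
Your proposal is correct and follows essentially the same route as the paper: Jordan decomposition of $T_m$, binomial expansion of $(\lambda I+N)^k$ for nonzero $\lambda$, vanishing of the nilpotent blocks for large $k$, and absorption of the shift $k=n-m-1$. Your explicit choice $p_{\lambda,m}(n)=q_\lambda(n-m-1)\lambda^{-(m+1)}$ spells out the shift step that the paper handles only in words.
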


\begin{proof}
Write
\[
n=m+1+k.
\]
Then
\[
a_{m+1+k,m}
=
v_mT_m^k\mathbf 1.
\]

Over $\mathbb C$, choose a Jordan decomposition
\[
T_m=SJS^{-1}.
\]
Consider a Jordan block of size $s$ associated with an eigenvalue $\lambda$:
\[
J_{\lambda,s}
=
\lambda I+N,
\]
where $N$ is nilpotent and
\[
N^s=0.
\]
Since $\lambda I$ and $N$ commute,
\[
J_{\lambda,s}^k
=
(\lambda I+N)^k
=
\sum_{r=0}^{s-1}
\binom{k}{r}
\lambda^{k-r}N^r.
\]

For fixed $r$, the binomial coefficient
\[
\binom{k}{r}
\]
is a polynomial in $k$. Thus, after multiplication on the left and right by fixed vectors, every Jordan block with nonzero eigenvalue contributes a polynomial in $k$ multiplied by $\lambda^k$.

Jordan blocks belonging to the eigenvalue $0$ are nilpotent, so their contribution vanishes once $k$ is sufficiently large. Therefore
\[
a_{m+1+k,m}
=
\sum_{\lambda\ne0}
q_{\lambda,m}(k)\lambda^k
\]
for suitable polynomials $q_{\lambda,m}$.

Finally,
\[
k=n-m-1
\]
differs from $n$ by only a fixed shift. Absorbing this shift into the polynomials and constant powers of $\lambda$ gives
\[
a_{n,m}
=
\sum_{\lambda\in\Lambda_m}
p_{\lambda,m}(n)\lambda^n
\]
for all sufficiently large $n$.
\end{proof}

Proposition~\ref{prop:spectral-expansion} explains why eigenvalues appear in the asymptotics, but it does not yet identify the actual exponential growth rate. Polynomial factors and several competing eigenvalues may all occur.

To isolate only the exponential scale, we take $n$th roots. If, for example,
\[
a_n\approx n^r\alpha^n,
\]
then
\[
a_n^{1/n}
\approx
n^{r/n}\alpha
\longrightarrow
\alpha.
\]
Thus an $n$th-root limit ignores polynomial factors and records only the exponential base.

\begin{theorem}[Exponential growth rate]
\label{thm:exponential-growth}
For every fixed $m\ge1$,
\[
\lim_{n\to\infty}a_{n,m}^{1/n}
=
\rho_m.
\]
\end{theorem}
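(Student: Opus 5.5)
Our approach is to prove matching upper and lower bounds on $a_{n,m}^{1/n}$. Both rest on the representation $a_{m+1+k,m}=v_mT_m^k\mathbf 1$ from Theorem~\ref{thm:finite-state-enumeration}, together with the positivity of $v_m$ (Corollary~\ref{cor:initial-positive}) and the nonnegativity of $T_m$. The case $m=1$ will be handled separately: by Proposition~\ref{prop:m1} we have $a_{n,1}=1$, and $\Omega_1=\{\dagger,(0)\}$ with $(0)\mapsto(0)$, so $\rho_1=1$ and the claim holds. We therefore focus on the general argument.

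\emph{Upper bound.} Since $T_m\ge0$ entrywise and $v_m,\mathbf 1$ have nonnegative entries,
\[
a_{m+1+k,m}=v_mT_m^k\mathbf 1\le \|v_m\|_1\,\|T_m^k\|_\infty .
\]
Gelfand's formula $\|T_m^k\|^{1/k}\to\rho_m$ then gives $\limsup a_{n,m}^{1/n}\le\rho_m$, because the fixed shift $m+1$ and the constant $\|v_m\|_1$ disappear under $n$th roots.

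\emph{Lower bound.} Here positivity is the key. Every coordinate of $v_m$ is at least $1$ and $\mathbf 1$ is all ones, so
\[
a_{m+1+k,m}\ge \mathbf 1^{\!\top}T_m^k\mathbf 1=\sum_{\omega,\eta}(T_m^k)_{\omega,\eta}\ge \max_{\omega,\eta}(T_m^k)_{\omega,\eta}.
\]
It then suffices to show that $\bigl(\max_{\omega,\eta}(T_m^k)_{\omega,\eta}\bigr)^{1/k}\to\rho_m$. We get this from Gelfand's formula again, using the fact that for a nonnegative matrix the entrywise sum $\mathbf 1^{\!\top}T_m^k\mathbf 1$ equals the $\ell^1$-entrywise norm of $T_m^k$. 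That norm is equivalent to any operator norm up to dimension-dependent constants, and these constants vanish under $k$th roots. The conclusion is $\liminf a_{n,m}^{1/n}\ge\rho_m$.

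The main point to get right is the lower bound. One cannot simply quote a Perron--Frobenius eigenvector, because $T_m$ need not be irreducible (the dead state is absorbing with a zero row), and a positive eigenvector paired against $v_m$ is not directly available. The trick is to avoid eigenvectors entirely: the strict positivity of every coordinate of $v_m$ (Corollary~\ref{cor:initial-positive}), together with the all-ones vector, lets $v_mT_m^k\mathbf 1$ dominate the full entry sum of $T_m^k$, which is a genuine norm of $T_m^k$. We then close by noting that $\rho_m\ge1$ for $m\ge1$; for example, the state $(0,1,\dots)$-type loop such as $(0,1)\to(0,1)$ for $m=2$ gives a cycle. So taking $n$th roots rather than $k$th roots changes nothing, since $(n/k)\to1$ and $\rho_m^{(m+1)/n}\to1$.
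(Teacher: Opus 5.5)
Your proof is correct, and your lower bound follows a different route from the paper's. Your upper bound is the same as the paper's (Gelfand's formula).

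For the lower bound, the paper passes to the Frobenius normal form of $T_m$ and selects an irreducible diagonal block $B$ with $\rho(B)=\rho_m$. It then bounds $a_{m+1+k,m}\ge xB^ky$, where $x,y$ are the positive restrictions of $v_m$ and $\mathbf 1$, and applies Perron--Frobenius to $xB^ky$. Your two bounds instead sandwich the count between constant multiples of one quantity,
\[
\mathbf 1^{\top}T_m^k\mathbf 1\;\le\;a_{m+1+k,m}\;\le\;\Bigl(\max_{\omega}v_m(\omega)\Bigr)\,\mathbf 1^{\top}T_m^k\mathbf 1 ,
\]
using that $v_m$ is a positive integer vector. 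Since $T_m^k\ge0$, the middle quantity is the entrywise $\ell^1$ norm of $T_m^k$. That norm is submultiplicative, so Gelfand's formula applies to it directly and both bounds come from one formula.

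Your route is shorter and more elementary: it needs no decomposition into strongly connected components and no eigenvectors. It also shows that $a_{m+1+k,m}$ is within constant factors of $\|T_m^k\|$. The paper's route identifies the component that carries the growth, and that is exactly what is reused in the primitive-core criterion for the sharper asymptotic $a_{n,m}\sim c_m\rho_m^n$.

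A few of your steps are unnecessary, though harmless:
\begin{itemize}
\item The detour through $\max_{\omega,\eta}(T_m^k)_{\omega,\eta}$ adds nothing, since you end up arguing via the entry sum anyway.
\item The separate treatment of $m=1$ is not needed.
\item The claim $\rho_m\ge1$ is not needed either: passing from a $k$th-root limit to an $n$th-root limit with $n=k+m+1$ works for any limit value, including $0$.
\end{itemize}
If you do want $\rho_m\ge1$ for every $m$, the general witness is the self-loop $\Phi_{m,0}(0,1,\ldots,m-1)=\operatorname{trim}_m(0,1,\ldots,m)=(0,1,\ldots,m-1)$. Your $m=2$ example is the special case of this.
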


\begin{proof}
Again write
\[
n=m+1+k.
\]
Then
\[
a_{m+1+k,m}
=
v_mT_m^k\mathbf 1.
\]

We first show that the counting sequence cannot grow faster than $\rho_m^k$. For any submultiplicative matrix norm,
\[
a_{m+1+k,m}
\le
\|v_m\|\,\|T_m^k\|\,\|\mathbf 1\|.
\]
Taking $k$th roots gives
\[
a_{m+1+k,m}^{1/k}
\le
\|v_m\|^{1/k}
\|T_m^k\|^{1/k}
\|\mathbf 1\|^{1/k}.
\]
The two fixed-vector factors tend to $1$, while the Gelfand formula gives
\[
\lim_{k\to\infty}
\|T_m^k\|^{1/k}
=
\rho(T_m)
=
\rho_m
\]~\cite{HornJohnson2013}.
Hence
\[
\limsup_{k\to\infty}
a_{m+1+k,m}^{1/k}
\le
\rho_m.
\]

For the reverse inequality, we must show that the part of $T_m$ responsible for the spectral radius actually contributes to the counting sequence.

Put $T_m$ into Frobenius normal form. Equivalently, decompose its directed state graph into strongly connected components. Among the corresponding diagonal blocks, choose an irreducible block $B$ satisfying
\[
\rho(B)=\rho_m.
\]

Let $x$ be the restriction of $v_m$ to the coordinates of this block, and let $y$ be the corresponding restriction of $\mathbf 1$. Every coordinate of $v_m$ is positive by Corollary~\ref{cor:initial-positive}, so
\[
x>0,
\qquad
y>0.
\]

The full quantity
\[
v_mT_m^k\mathbf 1
\]
counts all length-$k$ state walks arising from the initial distribution. In particular, it counts those walks that begin in the component represented by $B$ and remain there for all $k$ transitions. Since all matrix entries are nonnegative,
\[
a_{m+1+k,m}
\ge
xB^ky.
\]

For an irreducible nonnegative matrix $B$ and positive vectors $x,y$, Perron--Frobenius theory gives
\[
\lim_{k\to\infty}
(xB^ky)^{1/k}
=
\rho(B)
\]~\cite{HornJohnson2013}.
Therefore
\[
\liminf_{k\to\infty}
a_{m+1+k,m}^{1/k}
\ge
\rho(B)
=
\rho_m.
\]

Combining the upper and lower bounds,
\[
\lim_{k\to\infty}
a_{m+1+k,m}^{1/k}
=
\rho_m.
\]

Finally, replacing $k$ by
\[
n-m-1
\]
does not change the $n$th-root limit, since the difference is fixed. Therefore
\[
\lim_{n\to\infty}
a_{n,m}^{1/n}
=
\rho_m.
\]
\end{proof}

The lower-bound argument explains why the positivity of the initial vector was useful beyond exact enumeration. Every realizable state is already present at level $m+1$, so no dominant part of the transition graph is invisible to the counting process.

\begin{definition}[Exponential growth constant]
\label{def:growth-constant}
For fixed $m$, the quantity
\[
\rho_m
\]
will be called the \emph{exponential growth constant} of the family $(\mathcal C_{n,m})_{n\ge0}$.
\end{definition}

Thus
\[
a_{n,m}
=
\rho_m^{\,n+o(n)}.
\]
This notation should not be confused with the stronger statement
\[
a_{n,m}\sim c_m\rho_m^n,
\]
which will require additional structure. There is also an immediate algebraic consequence.

\begin{cor}[Algebraicity of the growth constant]
\label{cor:algebraic-growth}
For every fixed $m\ge1$, the growth constant $\rho_m$ is a nonnegative
algebraic integer.
\end{cor}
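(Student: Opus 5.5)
The plan is to use the fact that $T_m$ is a square matrix with integer entries, in fact a $0$--$1$ matrix by Proposition~\ref{prop:zero-one}. Its characteristic polynomial
\[
\chi_m(\lambda)=\det(\lambda I-T_m)
\]
is therefore monic with integer coefficients. This follows from the Leibniz expansion: every term is a product of entries of $\lambda I-T_m$, each lying in $\mathbb Z[\lambda]$, and the only term of degree $|\Omega_m|$ is the diagonal product, whose leading coefficient is $1$. Every eigenvalue of $T_m$ is a root of $\chi_m$, so every eigenvalue is an algebraic integer.

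The nonnegativity part is immediate from Definition~\ref{def:spectral-radius}, since $\rho_m$ is a maximum of absolute values. The substantive step is to show that $\rho_m$ is itself an eigenvalue, and not merely the modulus of one. For this I will invoke the Perron--Frobenius theorem for nonnegative (not necessarily irreducible) matrices, in the form cited in the proof of Theorem~\ref{thm:exponential-growth}: if $M\ge0$ entrywise, then $\rho(M)$ is an eigenvalue of $M$.

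If one prefers to stay within the argument already given, I would use the Frobenius normal form from that proof. Choose an irreducible diagonal block $B$ with $\rho(B)=\rho_m$. By Perron--Frobenius for irreducible matrices, $\rho(B)$ is an eigenvalue of $B$. Because $T_m$ is block triangular after a permutation of states, its characteristic polynomial is the product of the characteristic polynomials of its diagonal blocks. Hence $\rho_m$ is a root of $\chi_m$.

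One degenerate case needs a remark. If $\rho_m=0$ (all eigenvalues zero), then $0$ is trivially an algebraic integer. For $m=1$ this actually happens: $\Omega_1=\{\dagger,(0)\}$ with $(0)\mapsto\dagger$, so $T_1$ is nilpotent. This is consistent with Proposition~\ref{prop:m1}, since an eventually constant count of $1$ has $a_{n,1}^{1/n}\to1$, not $0$.

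Beyond this degenerate case I do not expect a real obstacle. The only point requiring care is justifying that the spectral radius is attained by a real nonnegative eigenvalue, which is exactly Perron--Frobenius for nonnegative matrices.
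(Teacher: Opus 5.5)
Your main argument is correct and is essentially the paper's proof. The paper takes an irreducible diagonal block $B$ of the Frobenius normal form with $\rho(B)=\rho_m$, uses Perron--Frobenius to make $\rho_m$ an eigenvalue of $B$, and concludes from the monic integer characteristic polynomial of $B$. Your version works with $\chi_m$ of all of $T_m$, using Perron--Frobenius for reducible nonnegative matrices, which is an equivalent packaging. Getting nonnegativity straight from the definition of the spectral radius is slightly more direct than the paper's appeal to Perron--Frobenius.

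Your degenerate-case remark, however, is wrong. For $m=1$,
$\Phi_{1,0}((0))=\operatorname{trim}_1(0,1)=(0)$, because trimming cuts at the first entry equal to $m=1$ and keeps the leading $0$. So $(0)\to(0)$ is a self-loop, $T_1$ is not nilpotent, and $\rho_1=1$. Directly: the decreasing permutation has profile $(0)$, and front insertion keeps it decreasing.

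Your ``consistency check'' actually exposes a contradiction rather than confirming anything. A nilpotent $T_1$ would force $a_{n,1}=v_1T_1^{\,n-2}\mathbf 1=0$ for large $n$. By Theorem~\ref{thm:exponential-growth} it would also give $a_{n,1}^{1/n}\to 0$, contrary to Proposition~\ref{prop:m1}.

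More generally, the state $(0,1,\ldots,m-1)$ of the decreasing permutation always has a self-loop, since $\operatorname{trim}_m(0,1,\ldots,m)=(0,1,\ldots,m-1)$. Hence $\rho_m\ge 1$ for every $m$, and the case $\rho_m=0$ never occurs. This does not damage your proof, because your general argument covers $\rho_m=0$ anyway, but the $m=1$ remark should be removed or corrected.
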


\begin{proof}
Because $T_m$ is a nonnegative integer matrix, $\rho_m$ is the Perron eigenvalue of an irreducible diagonal block of $T_m$. That block has integer entries, so $\rho_m$ is a root of its monic characteristic polynomial with integer coefficients. Hence $\rho_m$ is an algebraic integer. Its nonnegativity follows from Perron--Frobenius theory.
\end{proof}

\subsection{When a single exponential term dominates}
\label{subsec:pure-asymptotic}

Theorem~\ref{thm:exponential-growth} determines the exponential scale of the counting sequence for every fixed $m$. It does \emph{not} by itself imply that
\[
a_{n,m}\sim c_m\rho_m^n.
\]

The distinction matters. Several eigenvalues may have modulus $\rho_m$, which can produce periodic fluctuations. A nontrivial Jordan block at the dominant eigenvalue can introduce a polynomial factor such as $n\rho_m^n$. More generally, the structure of the dominant part of the transition graph determines whether one clean exponential term survives. To state a convenient sufficient condition, we formalize the relevant graph language.

\begin{definition}[Profile transition graph]
\label{def:profile-transition-graph}
For fixed $m$, the \emph{profile transition graph} $G_m$ is the directed graph with vertex set $\Omega_m$ and an edge
\[
\omega\longrightarrow\eta
\]
whenever
\[
(T_m)_{\omega,\eta}=1.
\]
Equivalently, there is an edge whenever insertion at some active site of a state $\omega$ produces the state $\eta$.
\end{definition}

Because the entries of $T_m$ are $0$ or $1$, the transfer matrix is exactly the adjacency matrix of $G_m$.

\begin{definition}[Strongly connected component]
\label{def:scc}
A set $K$ of vertices in a directed graph is \emph{strongly connected} if, for every $\omega,\eta\in K$, there is a directed path from $\omega$ to $\eta$ and a directed path from $\eta$ to $\omega$. A \emph{strongly connected component} is a maximal strongly connected set of vertices.
\end{definition}

Intuitively, once the state process enters a strongly connected component, every state in that component can eventually communicate with every other state in it. Consequently, such components describe the recurrent pieces of the transition system. One further condition rules out forced periodic oscillation.

\begin{definition}[Primitive component]
\label{def:primitive}
A strongly connected component is called \emph{primitive} if the adjacency matrix $B$ of that component is primitive; that is, if there exists some integer $r\ge1$ such that
\[
B^r
\]
has all entries strictly positive. Equivalently, the component is strongly connected and aperiodic.
\end{definition}

The condition says that, after sufficiently many transitions, there is no fixed congruence obstruction controlling which states can communicate. Under this hypothesis, Perron--Frobenius theory gives one genuinely dominant eigenvalue.

\begin{prop}[Primitive-core criterion]
\label{prop:primitive-core}
Suppose that $G_m$ has a unique strongly connected component $K_m$ whose adjacency matrix has spectral radius $\rho_m$, and suppose that this component is primitive.

Then there exist constants
\[
c_m>0
\qquad\text{and}\qquad
0\le\theta_m<\rho_m
\]
such that
\[
a_{n,m}
=
c_m\rho_m^n
+
O(\theta_m^n).
\]
\end{prop}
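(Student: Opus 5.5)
The plan is to run a spectral-gap argument on the full matrix $T_m$, using the block-triangular (Frobenius normal) form already used in the proof of Theorem~\ref{thm:exponential-growth}. Order the states of $\Omega_m$ so that $T_m$ is block upper triangular, with diagonal blocks equal to the adjacency matrices of the strongly connected components of $G_m$. Any vertex lying on no cycle contributes a $1\times1$ zero block.

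The spectrum of $T_m$, with algebraic multiplicities, is then the union of the spectra of these diagonal blocks. By hypothesis, every component other than $K_m$ has spectral radius strictly less than $\rho_m$. Let $B$ be the adjacency matrix of $K_m$. Since $B$ is primitive, it satisfies $\rho(B)=\rho_m>0$; a primitive matrix cannot be nilpotent. The Perron--Frobenius theorem for primitive matrices then gives two facts about $B$. First, $\rho_m$ is an algebraically simple eigenvalue of $B$. Second, every other eigenvalue of $B$ has modulus strictly less than $\rho_m$. Consequently $\rho_m$ is an algebraically simple eigenvalue of $T_m$, and every other eigenvalue of $T_m$ has modulus at most some $\mu<\rho_m$.

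Next I would use the spectral decomposition of $T_m$ with respect to the simple eigenvalue $\rho_m$. Write
\[
T_m^k=\rho_m^k P+R^k,
\]
where $P$ is the rank-one spectral projection onto the $\rho_m$-eigenspace and $R=T_m(I-P)$ has spectral radius $\mu$. Fix any $\theta_m$ with $\mu<\theta_m<\rho_m$. By the Gelfand formula cited earlier, $\|R^k\|=O(\theta_m^k)$; the polynomial factors coming from Jordan blocks are absorbed by choosing $\theta_m$ strictly larger than $\mu$. Substituting into Theorem~\ref{thm:finite-state-enumeration} with $n=m+1+k$ gives
\[
a_{n,m}=\rho_m^{\,k}\,v_mP\mathbf 1+O(\theta_m^k).
\]
Setting $c_m=\rho_m^{-(m+1)}v_mP\mathbf 1$ and absorbing the fixed shift between $k$ and $n$ into the constants yields the stated form $a_{n,m}=c_m\rho_m^n+O(\theta_m^n)$.

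The main obstacle is proving $c_m>0$. A priori the projection $P$ could annihilate $v_m$ or $\mathbf 1$, since $P$ involves left and right Perron vectors of $T_m$ that need not be positive outside $K_m$. I would avoid computing these vectors and instead reuse the lower bound from the proof of Theorem~\ref{thm:exponential-growth}. Let $x$ and $y$ be the restrictions of $v_m$ and $\mathbf 1$ to the coordinates of $K_m$. Both are strictly positive by Corollary~\ref{cor:initial-positive}, and nonnegativity of all entries gives $a_{m+1+k,m}\ge xB^ky$. For a primitive matrix, $B^k/\rho_m^k$ converges to $u w^{\top}$ with $u,w>0$ the suitably normalized Perron vectors, so $xB^ky/\rho_m^k$ tends to the positive number $(xu)(w^{\top}y)$. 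Dividing the expansion above by $\rho_m^k$ shows that $a_{m+1+k,m}/\rho_m^k\to v_mP\mathbf 1$. Comparing the two limits gives $v_mP\mathbf 1\ge (xu)(w^{\top}y)>0$, hence $c_m>0$.
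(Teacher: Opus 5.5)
Your proof is correct, and its skeleton is the paper's: block-triangular (Frobenius) form, the spectrum of $T_m$ as the union of the block spectra, Perron--Frobenius on the primitive block to make $\rho_m$ a simple, strictly dominant eigenvalue, and a rank-one spectral decomposition. Your projection $P$ is exactly the paper's $r_m\ell_m^{T}$ with $\ell_m^{T}r_m=1$.

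The genuine difference is how you show $c_m>0$. The paper only asserts that $(v_mr_m)(\ell_m^{T}\mathbf 1)>0$ because $v_m$ and the Perron vectors are positive on $K_m$. Read literally, this also needs the eigenvectors of the full matrix $T_m$ to be nonnegative off $K_m$, so that contributions from other states cannot cancel. That is true: for instance, $r_m$ vanishes on states that cannot reach $K_m$, and on the remaining states it is given by a convergent Neumann series with nonnegative terms. But the paper does not argue it.

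Your comparison $v_mP\mathbf 1=\lim_k a_{m+1+k,m}\rho_m^{-k}\ge\lim_k xB^ky\,\rho_m^{-k}=(xu)(w^{T}y)>0$ avoids analyzing $T_m$'s eigenvectors outside $K_m$ altogether. It reuses the lower bound already proved in Theorem~\ref{thm:exponential-growth}, so it is the more self-contained justification. The paper's eigenvector formulation, in return, writes $c_m$ explicitly in terms of Perron vectors, which it reuses for the limiting profile distribution.

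One harmless slip: the identity $T_m^k=\rho_m^kP+R^k$ holds for $k\ge1$ but not for $k=0$, which does not affect the asymptotics.
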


\begin{proof}
Let $B_m$ be the adjacency matrix of the unique dominant component $K_m$.

Because $B_m$ is primitive, the Perron--Frobenius theorem implies that $\rho_m$ is a simple eigenvalue of $B_m$, that it has positive left and right eigenvectors, and that every other eigenvalue of $B_m$ has modulus strictly smaller than $\rho_m$. Every other strongly connected component of $G_m$ has spectral radius strictly less than $\rho_m$ by hypothesis. Since the eigenvalues of $T_m$ are obtained from the diagonal blocks in Frobenius normal form, it follows that $\rho_m$ is a simple eigenvalue of $T_m$ and is the unique eigenvalue of $T_m$ having modulus $\rho_m$.

Hence there exist left and right eigenvectors $\ell_m$ and $r_m$ satisfying
\[
\ell_m^{T}T_m
=
\rho_m\ell_m^{T},
\qquad
T_mr_m
=
\rho_mr_m,
\]
which we normalize so that
\[
\ell_m^{T}r_m=1.
\]

The spectral decomposition then gives
\[
T_m^k
=
\rho_m^k r_m\ell_m^{T}
+
O(\theta_m^k)
\]
for some
\[
0\le\theta_m<\rho_m.
\]
Multiplying on the left by $v_m$ and on the right by $\mathbf 1$ yields
\[
a_{m+1+k,m}
=
(v_mr_m)(\ell_m^{T}\mathbf 1)\rho_m^k
+
O(\theta_m^k).
\]

The leading coefficient is positive because the initial vector is positive on the dominant component and the Perron eigenvectors are positive there. Absorbing the fixed shift $m+1$ into the leading constant gives
\[
a_{n,m}
=
c_m\rho_m^n
+
O(\theta_m^n)
\]
for some $c_m>0$.
\end{proof}

The proposition says that if there is one dominant recurrent part of the state graph and that component has no periodic obstruction, then every other contribution becomes exponentially smaller. The long-run count is thus controlled by one term.

\begin{cor}[Pure exponential asymptotic]
\label{cor:pure-asymptotic}
Under the hypotheses of Proposition~\ref{prop:primitive-core},
\[
a_{n,m}\sim c_m\rho_m^n.
\]
\end{cor}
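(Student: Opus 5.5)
This corollary follows directly from Proposition~\ref{prop:primitive-core}. The plan is to divide the expansion given there by $c_m\rho_m^n$ and check that the error term goes to zero.

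Under the stated hypotheses, Proposition~\ref{prop:primitive-core} gives constants $c_m>0$ and $0\le\theta_m<\rho_m$ with
\[
a_{n,m}=c_m\rho_m^n+O(\theta_m^n).
\]
We first need $\rho_m>0$ so that the division is legitimate. This holds because the proposition produces $\theta_m$ with $0\le\theta_m<\rho_m$, which forces $\rho_m>0$.

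Dividing by $c_m\rho_m^n$ gives
\[
\frac{a_{n,m}}{c_m\rho_m^n}=1+O\!\left(\frac{1}{c_m}\Bigl(\frac{\theta_m}{\rho_m}\Bigr)^{n}\right).
\]
Since $c_m$ is a fixed positive constant and $0\le\theta_m/\rho_m<1$, the error term tends to $0$ as $n\to\infty$. Hence the ratio tends to $1$, which is exactly $a_{n,m}\sim c_m\rho_m^n$.

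There is no real obstacle here. The only point to watch is that both $c_m$ and $\rho_m$ are strictly positive, so the ratio is well defined. Both facts are part of the conclusion of Proposition~\ref{prop:primitive-core}.
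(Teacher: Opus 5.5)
Your proposal is correct and matches the paper. The paper states this corollary without a separate proof, as an immediate consequence of Proposition~\ref{prop:primitive-core}, and the same normalization $a_{n,m}=c_m\rho_m^n\bigl(1+O((\theta_m/\rho_m)^n)\bigr)$ appears explicitly in its proof of Corollary~\ref{cor:ratio-limit}; your check that $0\le\theta_m<\rho_m$ forces $\rho_m>0$ makes explicit a point the paper leaves implicit.
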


\begin{cor}[Ratio limit]
\label{cor:ratio-limit}
Under the same hypotheses,
\[
\frac{a_{n+1,m}}{a_{n,m}}
\longrightarrow
\rho_m.
\]
\end{cor}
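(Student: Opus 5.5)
The ratio limit will follow directly from the two-term asymptotic expansion in Proposition~\ref{prop:primitive-core}, not just from Corollary~\ref{cor:pure-asymptotic}. The reason is that the bare equivalence $a_{n,m}\sim c_m\rho_m^n$ does not obviously give $a_{n,m}>0$ in a form we can divide by. The expansion does. Under the hypotheses, there are constants $c_m>0$ and $0\le\theta_m<\rho_m$ with
\[
a_{n,m}=c_m\rho_m^n+E_n,\qquad |E_n|\le K\theta_m^n
\]
for some constant $K$ and all sufficiently large $n$. We also need $\rho_m>0$. This holds because $\rho_m>\theta_m\ge0$.

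The first step is to write
\[
\frac{a_{n+1,m}}{a_{n,m}}
=
\frac{c_m\rho_m^{n+1}+E_{n+1}}{c_m\rho_m^n+E_n}
=
\rho_m\cdot
\frac{1+E_{n+1}/(c_m\rho_m^{n+1})}{1+E_n/(c_m\rho_m^{n})}.
\]
The second step is to bound the two correction terms. We have
\[
\bigl|E_n/(c_m\rho_m^n)\bigr|\le (K/c_m)(\theta_m/\rho_m)^n,
\]
and this tends to $0$ because $\theta_m/\rho_m<1$. The same bound holds with $n+1$ in place of $n$. Consequently the denominator is eventually positive and bounded away from zero, so $a_{n,m}>0$ for large $n$ and the ratio is well defined. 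Moreover, both the numerator and the denominator of the fraction tend to $1$, so the ratio tends to $\rho_m$.

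There is no real obstacle here. The only point needing care is the one noted at the start: we must confirm that $\rho_m>0$ and that $a_{n,m}\neq 0$ eventually, so that the quotient makes sense. Both facts come from the positivity of $c_m$ and from the strict inequality $\theta_m<\rho_m$ in Proposition~\ref{prop:primitive-core}. Alternatively, we could simply cite Corollary~\ref{cor:pure-asymptotic}: since $a_{n,m}/(c_m\rho_m^n)\to1$, the quotient of consecutive such ratios also tends to $1$, and this gives the claim immediately.
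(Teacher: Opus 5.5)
Your proof is correct and is essentially the paper's argument: the paper also writes $a_{n,m}=c_m\rho_m^n\bigl(1+O((\theta_m/\rho_m)^n)\bigr)$ using Proposition~\ref{prop:primitive-core}, and lets the geometrically decaying error vanish in the quotient. Your extra care about dividing by $a_{n,m}$ is harmless but not really needed: the decreasing permutation always lies in $\mathcal C_{n,m}$, so $a_{n,m}\ge 1$ for all $n$, and in any case $a_{n,m}\sim c_m\rho_m^n$ with $c_m>0$ already forces eventual positivity.
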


\begin{proof}
By Proposition~\ref{prop:primitive-core},
\[
a_{n,m}
=
c_m\rho_m^n
\left(
1+
O\left(
\left(\frac{\theta_m}{\rho_m}\right)^n
\right)
\right).
\]
Since
\[
\frac{\theta_m}{\rho_m}<1,
\]
the error tends to zero geometrically. Therefore
\[
\frac{a_{n+1,m}}{a_{n,m}}
\longrightarrow
\rho_m.
\]
\end{proof}

The same argument says more than how many permutations there are. It also describes how those permutations are distributed among the active-profile states.

For a uniformly chosen permutation
\[
\pi\in\mathcal C_{n,m},
\]
we have
\[
\mathbb P\bigl(\lambda_m(\pi)=\omega\bigr)
=
\frac{u_{n,m}(\omega)}{a_{n,m}}.
\]
Thus normalizing the state-count vector turns it into a probability distribution on $\Omega_m$.

\begin{cor}[Limiting profile distribution]
\label{cor:profile-distribution}
Under the hypotheses of Proposition~\ref{prop:primitive-core},
\[
\frac{u_{n,m}}{a_{n,m}}
\longrightarrow
\frac{\ell_m^T}{\ell_m^T\mathbf 1},
\]
where $\ell_m^T$ is a nonnegative left Perron eigenvector associated with
$\rho_m$.

Equivalently, for every state $\omega\in\Omega_m$,
\[
\frac{u_{n,m}(\omega)}{a_{n,m}}
\]
converges to the corresponding coordinate of the normalized left Perron eigenvector.
\end{cor}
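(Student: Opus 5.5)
We will prove Corollary~\ref{cor:profile-distribution} by reusing the spectral decomposition from the proof of Proposition~\ref{prop:primitive-core}, now applied to the whole state-count vector rather than only to its total mass. By Theorem~\ref{thm:finite-state-enumeration}, writing $n=m+1+k$,
\[
u_{n,m}=v_mT_m^{k}.
\]
Under the hypotheses, $\rho_m$ is a simple eigenvalue of $T_m$ and is the only eigenvalue of modulus $\rho_m$. Hence, with $\ell_m^T r_m=1$,
\[
T_m^k=\rho_m^k r_m\ell_m^T+E_k,
\qquad
\|E_k\|=O(\theta_m^k),
\]
for some $\theta_m<\rho_m$. This gives
\[
u_{n,m}=\rho_m^k (v_mr_m)\,\ell_m^T+O(\theta_m^k).
\]
Multiplying by $\mathbf 1$ yields
\[
a_{n,m}=\rho_m^k (v_mr_m)(\ell_m^T\mathbf 1)+O(\theta_m^k).
\]

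Next we check that the leading coefficients are nonzero. The quantity $(v_mr_m)(\ell_m^T\mathbf 1)$ is exactly the constant shown to be positive in Proposition~\ref{prop:primitive-core}, so $v_mr_m\neq0$ and $\ell_m^T\mathbf 1\neq0$. To make this precise, we will choose the eigenvectors to be nonnegative, which Perron--Frobenius allows for the nonnegative matrix $T_m$. Then $r_m$ is positive on the coordinates of $K_m$, and $v_m$ is positive everywhere by Corollary~\ref{cor:initial-positive}, so $v_mr_m>0$. Likewise $\ell_m$ is nonnegative and nonzero, since it is positive on $K_m$, so $\ell_m^T\mathbf 1>0$. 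The normalization $\ell_m^Tr_m=1$ only rescales these vectors by positive constants, so both factors stay positive.

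Dividing the two expansions, the factor $\rho_m^k(v_mr_m)$ cancels. The relative errors are $O((\theta_m/\rho_m)^k)\to0$, so
\[
\frac{u_{n,m}}{a_{n,m}}\longrightarrow\frac{\ell_m^T}{\ell_m^T\mathbf 1}.
\]
Reading this coordinatewise gives the equivalent statement for each state $\omega\in\Omega_m$.

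The main obstacle is the nonnegativity of $\ell_m$ on all of $\Omega_m$. A left eigenvector of a reducible matrix need not be nonnegative. We will handle this through the Frobenius block structure. Order the strongly connected components topologically. The left eigenvector is supported on $K_m$ and on the components reachable from $K_m$. On the downstream components it solves
\[
x^T(\rho_m I-C)=y^T,
\]
where $C$ is the block of the downstream components and $y\ge0$ comes from the edges leaving $K_m$. Every such component has spectral radius strictly less than $\rho_m$, so
\[
(\rho_m I-C)^{-1}=\sum_{j\ge0}\rho_m^{-j-1}C^j\ge0.
\]
Therefore $x\ge0$. This justifies the word ``nonnegative'' in the statement and the positivity claims used above.
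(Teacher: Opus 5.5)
Your proof is correct and follows the paper's argument: expand $T_m^k=\rho_m^k r_m\ell_m^T+O(\theta_m^k)$, multiply on the left by $v_m$, sum coordinates to get the expansion of $a_{n,m}$, and divide. The paper only asserts that $\ell_m$ is nonnegative and that the constants $v_mr_m$ and $\ell_m^T\mathbf 1$ are positive. Your block-triangular argument, using the Neumann series for $(\rho_m I-C)^{-1}$ on the downstream components and the positivity of $v_m$, actually proves both claims.
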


\begin{proof}
Write
\[
n=m+1+k.
\]
Under the hypotheses of Proposition~\ref{prop:primitive-core},
\[
T_m^k
=
\rho_m^k r_m\ell_m^T
+
O(\theta_m^k).
\]
Hence
\[
u_{n,m}
=
v_mT_m^k
=
(v_mr_m)\rho_m^k\ell_m^T
+
O(\theta_m^k).
\]
Summing the coordinates gives
\[
a_{n,m}
=
(v_mr_m)(\ell_m^T\mathbf 1)\rho_m^k
+
O(\theta_m^k).
\]
Dividing the vector relation by the scalar relation yields
\[
\frac{u_{n,m}}{a_{n,m}}
\longrightarrow
\frac{\ell_m^T}{\ell_m^T\mathbf 1}.
\]
\end{proof}

Thus the Perron eigenvector has a direct combinatorial interpretation. It describes the limiting frequency of the different extension types among large admissible permutations.

\begin{remark}[Spectral gap and convergence rate]
\label{rem:spectral-gap}
Under the primitive-core hypothesis, let
\[
\theta_m
=
\max\{
|\lambda|:
\lambda\in\operatorname{Spec}(T_m),
\ \lambda\ne\rho_m
\}.
\]
Then
\[
\theta_m<\rho_m.
\]
After normalizing by $\rho_m^n$, the remaining spectral contributions decay at an exponential rate governed by
\[
\frac{\theta_m}{\rho_m}.
\]
Thus, a large gap between $\rho_m$ and the remaining eigenvalues means that the asymptotic behavior becomes visible quickly, while a small gap means
that finite-$n$ behavior can remain far from its limiting form for much longer.
\end{remark}

The concrete cases $m=2$ and $m=3$ satisfy the primitive-core condition. Their recurrent components are strongly connected and contain self-loops, so the corresponding adjacency matrices are primitive. Consequently, the growth constants determined by the corresponding transfer matrices control not only the $n$th-root growth but the full leading asymptotic in those cases.

\subsection{Dependence of the growth rate on $m$}
\label{subsec:growth-vs-m}

Everything so far has fixed $m$ and allowed the permutation length $n$ to grow. We now ask a different question: how does the resulting growth constant $\rho_m$ change when the adjacency radius itself increases?

There is an obvious benchmark. Without the adjacency restriction, $123$-avoiding permutations are counted by the Catalan numbers,
\[
C_n
=
\frac{1}{n+1}\binom{2n}{n},
\]
and
\[
C_n^{1/n}\longrightarrow4.
\]
Thus $4$ is the unrestricted exponential growth constant.

Since increasing $m$ weakens the adjacency restriction, one should expect $\rho_m$ to increase toward $4$. The next theorem confirms this and also gives a quantitative lower bound.

\begin{theorem}[Bounds and limiting growth rate]
\label{thm:rho-bounds}
For every $m\ge1$,
\[
C_{m-1}^{\,1/(m+1)}
\le
\rho_m
<
4.
\]
Moreover,
\[
\rho_m\le\rho_{m+1}
\]
for every $m$, and
\[
\rho_m\longrightarrow4
\qquad\text{as }m\to\infty.
\]
More quantitatively,
\[
0<4-\rho_m
=
O\left(\frac{\log m}{m}\right).
\]
\end{theorem}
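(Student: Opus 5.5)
The plan is to prove the four assertions separately: the lower bound by an explicit block construction, monotonicity by inclusion, the strict upper bound by comparison with a bounded-height Catalan family, and the limit and rate from the lower bound. Each step converts a statement about $a_{n,m}$ into one about $\rho_m$ via Theorem~\ref{thm:exponential-growth}.

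\emph{Lower bound.} Fix $k\ge1$ and $n=k(m+1)$. Cut $[n]$ into $k$ blocks of $m+1$ consecutive values, and list the blocks from the highest values to the lowest. Inside each block, write the block maximum first, the block minimum last, and in between an arbitrary $123$-avoiding arrangement of the remaining $m-1$ values.

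\begin{itemize}
\item Within a block, adjacency is automatic, since its values span a range of $m$.
\item At a junction, the last entry of one block (its minimum, say $a+1$) is followed by the first entry of the next block (its maximum, $a$), a difference of $1$.
\item Any $123$ pattern must lie inside one block, because across blocks the values decrease.
\item Inside a block, a pattern cannot use the leading maximum or the trailing minimum, so it would have to lie in the middle part, which avoids $123$.
\end{itemize}

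Hence $a_{k(m+1),m}\ge C_{m-1}^{k}$. Taking $n$th roots along $n=k(m+1)$ and applying Theorem~\ref{thm:exponential-growth} gives $\rho_m\ge C_{m-1}^{1/(m+1)}$.

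\emph{Monotonicity.} We have $\mathcal C_{n,m}\subseteq\mathcal C_{n,m+1}$, so $a_{n,m}\le a_{n,m+1}$. Taking $n$th roots and applying Theorem~\ref{thm:exponential-growth} again gives $\rho_m\le\rho_{m+1}$.

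\emph{Strict upper bound.} This is the main obstacle: the trivial bound $a_{n,m}\le C_n$ only yields $\rho_m\le4$. The key input is the count already extracted in the proof of Theorem~\ref{thm:localization}: every entry of $\pi\in\mathcal C_{n,m}$ has at most $m$ larger entries to its right. So $\mathcal C_{n,m}$ sits inside the set of $123$-avoiding permutations with this property, equivalently those avoiding $1\,(m+2)(m+1)\cdots2$ as well as $123$. I would encode $123$-avoiders by Dyck paths through the standard left-to-right-minima bijection, arranged so that the number of larger entries to the right of an entry is read off as a height on the path. The goal is to show that this constraint confines the path to a strip of height about $m+1$.

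Dyck paths of bounded height $h$ are counted by a transfer matrix, namely the path graph on $h+1$ vertices. Their growth rate is therefore $4\cos^2(\pi/(h+2))<4$, which would give $\rho_m\le4\cos^2(\pi/(m+c))<4$ for a suitable constant $c$. The delicate point is matching the statistic exactly to the height in the chosen bijection. If that fails, I would fall back on a direct argument: the set above is a proper subclass of $\operatorname{Av}(123)$, and its generating function satisfies a finite continued-fraction recursion whose dominant singularity exceeds $1/4$.

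\emph{Limit and rate.} By Stirling, $C_{m-1}=\Theta(4^{m-1}m^{-3/2})$. Therefore
\[
C_{m-1}^{1/(m+1)}=4\exp\!\Bigl(-\tfrac{2\log4+\tfrac32\log m+O(1)}{m+1}\Bigr)=4-O\!\left(\tfrac{\log m}{m}\right).
\]
Together with the strict bound $\rho_m<4$, this gives $0<4-\rho_m=O(\log m/m)$, and in particular $\rho_m\to4$ as $m\to\infty$.
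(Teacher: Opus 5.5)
Your proof is correct. The lower bound, the monotonicity via $\mathcal C_{n,m}\subseteq\mathcal C_{n,m+1}$, and the rate from the Catalan asymptotic are exactly what the paper does. Its lower bound uses the same descending blocks of size $m+1$ with a free $123$-avoider in the middle.

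The genuine difference is the strict inequality $\rho_m<4$. The paper proves it softly. $A_m(z)$ is rational by Corollary~\ref{cor:rational-gen} and is dominated coefficientwise by $C(z)$, which stays bounded (tending to $2$) as $z\uparrow 1/4$. If $\rho_m=4$, Pringsheim's theorem would put a pole of $A_m$ at $1/4$, a contradiction.

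You instead embed the class in bounded-height Dyck paths, and the step you flagged as delicate works out exactly:
\begin{itemize}
\item In a $123$-avoider, only left-to-right minima can have larger entries to their right.
\item A left-to-right minimum $\pi_i$ has exactly $n+1-i-\pi_i$ such entries. This is at most $m$, which is also immediate from Theorem~\ref{thm:localization}.
\item Use the standard encoding: at each new left-to-right minimum take (previous minimum $-\,\pi_i$) up steps, with the previous minimum initialized to $n+1$, and take one down step per position. The peak just before the down step at position $i$ then has height $n+2-i-\pi_i\le m+1$.
\item The encoding is injective, since the non-minima are forced into decreasing order.
\end{itemize}
Hence $a_{n,m}$ is at most the number of Dyck paths of semilength $n$ and height at most $m+1$, and so $\rho_m\le 4\cos^2\bigl(\pi/(m+3)\bigr)$.

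The paper's route needs no bijection and simply recycles the rationality already established, but it says nothing about the size of the gap. Your route is independent of the transfer-matrix rationality and is quantitative: $4-\rho_m\ge 4\sin^2\bigl(\pi/(m+3)\bigr)\sim 4\pi^2/m^2$. Combined with the block lower bound, this traps $4-\rho_m$ between order $m^{-2}$ and order $\log m/m$. That is a partial answer to the rate question the paper leaves open in its final section.
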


\begin{proof}
We begin with monotonicity. Increasing the adjacency radius can only weaken the restriction, so
\[
\mathcal C_{n,m}
\subseteq
\mathcal C_{n,m+1}.
\]
Hence
\[
a_{n,m}
\le
a_{n,m+1}
\]
for every $n$. Taking $n$th roots and then passing to the limit gives
\[
\rho_m
\le
\rho_{m+1}.
\]

Since
\[
\mathcal C_{n,m}
\subseteq
\operatorname{Av}_n(123),
\]
we also have
\[
a_{n,m}\le C_n.
\]
Therefore
\[
\rho_m\le4.
\]

We claim that equality cannot hold for any fixed $m$.

Let
\[
A_m(z)
=
\sum_{n\ge0}a_{n,m}z^n
\]
and let
\[
C(z)
=
\sum_{n\ge0}C_nz^n
=
\frac{1-\sqrt{1-4z}}{2z}
\]
be the Catalan generating function.

Coefficient-wise,
\[
0\le A_m(z)\le C(z)
\]
for real
\[
0\le z<\frac14.
\]
Moreover,
\[
C(z)\longrightarrow2
\qquad
\text{as }
z\uparrow\frac14.
\]
Thus $A_m(z)$ remains bounded as $z$ approaches $1/4$ from below.

Suppose for contradiction that
\[
\rho_m=4.
\]
By Theorem~\ref{thm:exponential-growth}, the radius of convergence of
$A_m(z)$ would then be
\[
\frac14.
\]
Since $A_m(z)$ has nonnegative coefficients, Pringsheim's theorem implies that the positive real point
\[
z=\frac14
\]
is a singularity of $A_m$~\cite{FlajoletSedgewick2009}.

But $A_m(z)$ is rational by the generating-function results of Corollary~\ref{cor:rational-gen}. Any finite singularity of a rational function is a pole. Therefore, $A_m(z)$ would be unbounded as
\[
z\uparrow\frac14,
\]
contradicting
\[
0\le A_m(z)\le C(z)\longrightarrow2.
\]
Hence
\[
\rho_m<4
\]
for every fixed $m$.

It remains to obtain a lower bound approaching $4$. We adapt the descending-block construction of Mayama and Akita~\cite{MayamaAkita2026}.

Fix $m$, and let
\[
n=q(m+1).
\]
Partition the values
\[
1,2,\ldots,n
\]
into $q$ consecutive blocks of size $m+1$, and arrange the blocks from largest to smallest. Thus every entry in an earlier block is larger than every entry in a later block. Within each block, place the largest value first and the smallest value last. Arrange the remaining $m-1$ values according to an arbitrary $123$ avoiding permutation. Schematically, each block has the form
\[
\text{maximum}
\quad
\underbrace{
\text{$123$-avoider on the middle $m-1$ values}
}_{C_{m-1}\text{ choices}}
\quad
\text{minimum}.
\]

Every pair of values within one block differs by at most $m$, so the adjacency restriction is automatic inside a block. At the boundary between two consecutive blocks, the final entry of the higher block is its minimum, while the first entry of the lower block is its maximum. Because the blocks are consecutive intervals, these two values differ by exactly $1$. The resulting permutation also avoids $123$. A $123$ pattern cannot use entries from two different blocks, because every entry in an earlier block is larger than every entry in a later block. Within a single block, the first maximum and final minimum cannot participate in an increasing triple, so any $123$ pattern would have to lie entirely inside the middle portion. That middle portion was chosen to avoid $123$. Hence, every independent choice of a middle $123$-avoider in each block produces a permutation in $\mathcal C_{q(m+1),m}$. There are
\[
C_{m-1}^q
\]
such choices, so
\[
a_{q(m+1),m}
\ge
C_{m-1}^q.
\]

Taking $(q(m+1))$th roots gives
\[
a_{q(m+1),m}^{1/(q(m+1))}
\ge
C_{m-1}^{1/(m+1)}.
\]
Letting $q\to\infty$ and applying
Theorem~\ref{thm:exponential-growth},
\[
\rho_m
\ge
C_{m-1}^{1/(m+1)}.
\]

We, as a result, have
\[
C_{m-1}^{1/(m+1)}
\le
\rho_m
<
4.
\]

The Catalan asymptotic
\[
C_{m-1}
\sim
\frac{4^{m-1}}
{\sqrt{\pi}(m-1)^{3/2}}
\]
implies
\[
C_{m-1}^{1/(m+1)}
\longrightarrow4
\]~\cite{FlajoletSedgewick2009}.
Hence, the squeeze theorem gives
\[
\rho_m\longrightarrow4.
\]

Finally,
\[
\log
\left(
\frac{4}
{C_{m-1}^{1/(m+1)}}
\right)
=
\log4
-
\frac{\log C_{m-1}}{m+1}
=
O\left(\frac{\log m}{m}\right).
\]
Write
\[
C_{m-1}^{1/(m+1)}
=
4e^{-x_m},
\]
where
\[
x_m
=
O\left(\frac{\log m}{m}\right).
\]
Since
\[
1-e^{-x}\le x
\qquad
(x\ge0),
\]
we obtain
\[
0
\le
4-\rho_m
\le
4-C_{m-1}^{1/(m+1)}
=
4(1-e^{-x_m})
=
O\left(\frac{\log m}{m}\right).
\]
Since $\rho_m<4$ for every finite $m$, we have that
\[
4-\rho_m > 0
\]
\end{proof}

The theorem gives a useful picture of the effect of the adjacency restriction. For every fixed finite $m$, it removes enough permutations to lower the exponential growth rate strictly below the unrestricted Catalan value $4$. Nevertheless, as the permitted jump size increases, the loss becomes negligible on the exponential scale:
\[
\rho_m\uparrow4.
\]

The lower bound also shows why this happens. Large values of $m$ allow increasingly large blocks inside which essentially unrestricted Catalan behavior can occur, while arranging those blocks in decreasing value order prevents interactions between them from creating a $123$ pattern.

The preceding theorem takes the limits in the order
\[
n\to\infty
\quad\text{first, with $m$ fixed,}
\qquad
m\to\infty
\quad\text{afterward.}
\]
A related construction shows that the same exponential scale is recovered even when the adjacency radius grows together with the permutation length.

\begin{cor}[Growing adjacency radius]
\label{cor:growing-radius}
Let $m=m(n)$ be an integer-valued function satisfying
\[
m(n)\longrightarrow\infty.
\]
Then
\[
a_{n,m(n)}^{1/n}
\longrightarrow4.
\]
\end{cor}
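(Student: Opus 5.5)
The plan is to squeeze $a_{n,m(n)}^{1/n}$ between two bounds that both tend to $4$. The upper bound is immediate: $\mathcal C_{n,m(n)}\subseteq\operatorname{Av}_n(123)$, so $a_{n,m(n)}\le C_n$ and $\limsup_n a_{n,m(n)}^{1/n}\le\lim_n C_n^{1/n}=4$. The work is in the lower bound, for which I would reuse the descending-block construction from the proof of Theorem~\ref{thm:rho-bounds}, now with the block size tied to $n$ and with a leftover piece.

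For fixed $n$, write $m=m(n)$. If $m\ge n-1$ the adjacency condition is vacuous and $a_{n,m}=C_n$, so assume $m\le n-2$. Write $n=q(m+1)+s$ with $q\ge1$ and $0\le s\le m$. Build permutations from $q$ full blocks of $m+1$ consecutive values, each of the form (maximum, a $123$-avoider on the middle $m-1$ values, minimum), followed by a final block of the $s$ smallest values arranged in decreasing order. Blocks are placed in decreasing value order. Adjacency inside a full block holds because its values span at most $m$. Inside the tail, consecutive entries differ by $1$. At each junction the minimum of one block meets the maximum of the next block, and these differ by $1$. The argument excluding $123$ is the same as in Theorem~\ref{thm:rho-bounds}: no pattern can cross blocks, the tail is decreasing, and inside a full block only the middle part can carry an increasing triple. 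This gives
\[
a_{n,m}\ge C_{m-1}^{\,q}.
\]

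Next I would estimate the exponent. We have $\log a_{n,m}\ge q\log C_{m-1}$ and $q\ge n/(m+1)-1$. Set $\varepsilon_m=\log4-\frac{\log C_{m-1}}{m+1}$; by the Catalan asymptotic, $\varepsilon_m=O(\log m/m)\to0$. Then
\[
\frac{\log a_{n,m}}{n}\ge\Bigl(\frac1{m+1}-\frac1n\Bigr)\log C_{m-1}\ge(\log4-\varepsilon_m)-\frac{\log C_{m-1}}{n}.
\]
The last term is a problem, because it is of order $m\log 4/n$ and stays bounded away from $0$ when $m$ is comparable to $n$. This is the main obstacle: the leftover of at most $m$ entries, and the loss of $q=1$ versus $n/(m+1)$, cost a linear fraction of $n$ once $m$ is large compared with $n$.

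To handle it, I would not use $m(n)$ itself as the block parameter. Instead, for each $n$ let $m'=\min\{m(n),\lfloor\sqrt n\rfloor\}$, which still tends to infinity, and use $\mathcal C_{n,m'}\subseteq\mathcal C_{n,m(n)}$. For $m'$ the error term is $\log C_{m'-1}/n\le m'\log4/n=O(1/\sqrt n)\to0$, and $\varepsilon_{m'}\to0$ as well. Therefore $\liminf_n a_{n,m(n)}^{1/n}\ge4$, and together with the upper bound this gives the limit $4$. I would double-check the small cases where $q\ge1$ could fail; they cannot arise for $m'\le\sqrt n\le n-2$ once $n$ is large.
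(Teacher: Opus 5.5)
Your proof is correct. The Catalan upper bound and the descending-block construction (maximum first, minimum last, arbitrary $123$-avoider in the middle) are the same as in the paper. The difference is in how you neutralize the rounding loss when $m(n)$ is comparable to $n$. You correctly identify this as the real obstacle: with $m(n)\approx n/2$ the naive construction only gives growth about $2$.

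The paper keeps the radius $L_n=\min\{m(n)+1,n\}$ and splits $[n]$ into $q_n=\lceil n/L_n\rceil$ blocks of nearly equal size, each between $\lfloor L_n/2\rfloor$ and $L_n$. There is then no leftover piece, and the total loss is $O(q_n\log L_n)$.

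You instead keep fixed-size blocks of length $m'+1$ and append a decreasing tail. You make the tail and the $-1$ in $q\ge n/(m'+1)-1$ negligible by shrinking the radius to $m'=\min\{m(n),\lfloor\sqrt n\rfloor\}$ and using $\mathcal C_{n,m'}\subseteq\mathcal C_{n,m(n)}$.

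What each route buys:
\begin{itemize}
\item Your route reuses the construction of Theorem~\ref{thm:rho-bounds} essentially verbatim. It avoids the block-size bookkeeping and the need to show that the minimum block size tends to infinity.
\item The paper's route uses the full available radius, so it yields the sharper rate $\log 4-\frac1n\log a_{n,m(n)}=O(\log L_n/L_n)$.
\item Your cap at $\sqrt n$ limits your rate to about $O(\log n/\sqrt n)$ whenever $m(n)$ exceeds $\sqrt n$. For the qualitative limit claimed in the corollary this does not matter.
\end{itemize}

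Finally, the $q\ge 1$ check you flag is unnecessary. When $q=0$ the construction is just the decreasing permutation, and the inequality $q\ge n/(m'+1)-1$ still holds.
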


\begin{proof}
The Catalan upper bound gives
\[
a_{n,m(n)}
\le
C_n,
\]
and thus
\[
\limsup_{n\to\infty}
a_{n,m(n)}^{1/n}
\le
4.
\]

For the lower bound, set
\[
L_n
=
\min\{m(n)+1,n\}.
\]
Then
\[
L_n\longrightarrow\infty.
\]

Partition the values \(1,\ldots,n\) into
\[
q_n=\left\lceil\frac{n}{L_n}\right\rceil
\]
consecutive blocks, arranged from largest to smallest, with block sizes differing by at most one. Write these sizes as
\[
b_{1,n},\ldots,b_{q_n,n}.
\]
Thus each block has size either
\[
\left\lfloor\frac{n}{q_n}\right\rfloor
\qquad\text{or}\qquad
\left\lceil\frac{n}{q_n}\right\rceil.
\]

Since
\[
q_n\ge \frac{n}{L_n},
\]
we have
\[
\frac{n}{q_n}\le L_n,
\]
and, as a result, 
\[
b_{i,n}\le L_n\le m(n)+1
\]
for every \(i\). On the other hand,
\[
q_n
\le
\frac{n}{L_n}+1,
\]
so, using \(L_n\le n\),
\[
\frac{n}{q_n}
\ge
\frac{n}{n/L_n+1}
=
\frac{nL_n}{n+L_n}
\ge
\frac{L_n}{2}.
\]
Consequently,
\[
\min_i b_{i,n}
=
\left\lfloor\frac{n}{q_n}\right\rfloor
\ge
\left\lfloor\frac{L_n}{2}\right\rfloor
\longrightarrow\infty.
\]

Within a block of size $b_{i,n}$, place its maximum first and its minimum last, and arrange the remaining
\[
b_{i,n}-2
\]
values according to an arbitrary $123$-avoiding permutation. The same argument used in Theorem~\ref{thm:rho-bounds} shows that concatenating these blocks produces a valid member of $\mathcal C_{n,m(n)}$.

Hence
\[
a_{n,m(n)}
\ge
\prod_{i=1}^{q_n}
C_{b_{i,n}-2}.
\]

Using
\[
\log C_r
=
r\log4-O(\log r),
\]
we obtain
\[
\begin{aligned}
\log a_{n,m(n)}
&\ge
\sum_{i=1}^{q_n}
\left[
(b_{i,n}-2)\log4
-
O(\log b_{i,n})
\right]\\
&=
n\log4
-
O(q_n\log L_n).
\end{aligned}
\]
Since
\[
q_n
=
O\left(\frac{n}{L_n}\right),
\]
it follows that
\[
\frac1n\log a_{n,m(n)}
\ge
\log4
-
O\left(
\frac{\log L_n}{L_n}
\right).
\]
Because
\[
L_n\longrightarrow\infty,
\]
the error term tends to zero. Thus
\[
\liminf_{n\to\infty}
a_{n,m(n)}^{1/n}
\ge
4.
\]

Combining this with the Catalan upper bound gives
\[
a_{n,m(n)}^{1/n}
\longrightarrow4.
\]
\end{proof}

The fixed-$m$ and growing-$m$ results together describe the exponential scale of the family across the two natural regimes. A fixed adjacency radius produces a growth constant
\[
\rho_m<4,
\]
determined by the spectral radius of a finite transition matrix. Increasing the radius enlarges the family and increases this constant, and any adjacency radius tending to infinity eventually recovers the unrestricted Catalan growth rate on the exponential scale.

\section{Pattern symmetries under the adjacency constraint}
\label{sec:symmetries}

For ordinary pattern avoidance, all six patterns of length three have the same enumeration:
\[
|\operatorname{Av}_n(\tau)|=C_n
\qquad
\text{for every }\tau\in S_3
\]~\cite{SimionSchmidt1985}.
It is natural to ask whether the same equivalence survives after the bounded-adjacency condition is imposed. There is a reason to be cautious. The condition
\[
|\pi_{i+1}-\pi_i|\le m
\]
is not purely a condition on the relative order of the entries. It couples two different structures: consecutive \emph{positions} must contain numerically nearby \emph{values}. A symmetry that changes positions or values in a distance-preserving way may survive, while one that interchanges the roles of positions and values need not.

The purpose of this section is to determine which of the classical symmetries remain valid and, consequently, which length-three patterns must still have the same constrained enumeration.

\begin{definition}[Adjacency-constrained avoidance class]
\label{def:general-constrained-class}
Let $\tau\in S_k$. Define
\[
\mathcal C_{n,m}(\tau)
=
\left\{
\pi\in S_n:
\pi\text{ avoids }\tau
\text{ and }
|\pi_{i+1}-\pi_i|\le m
\text{ for }1\le i<n
\right\}.
\]
Write
\[
a_{n,m}(\tau)
=
|\mathcal C_{n,m}(\tau)|.
\]
\end{definition}

Thus the family studied throughout the preceding sections is
\[
\mathcal C_{n,m}
=
\mathcal C_{n,m}(123),
\]
and
\[
a_{n,m}=a_{n,m}(123).
\]

\subsection{Constraint-preserving symmetries}
\label{subsec:preserved-symmetries}

We first consider reversal and complement.

\begin{definition}[Reversal, complement, and inverse]
\label{def:permutation-symmetries}
Let
\[
\pi=\pi_1\pi_2\cdots\pi_n\in S_n.
\]
Its \emph{reversal} is the permutation $R(\pi)$ defined by
\[
R(\pi)_i
=
\pi_{n+1-i}.
\]
Its \emph{complement} is the permutation $C(\pi)$ defined by
\[
C(\pi)_i
=
n+1-\pi_i.
\]
The \emph{reverse-complement} is
\[
RC(\pi)=R(C(\pi)),
\]
and the \emph{inverse} is the usual inverse permutation
\[
I(\pi)=\pi^{-1}.
\]
\end{definition}

Reversal changes the order in which adjacent pairs are read, but does not change the differences within those pairs. Complement reflects the values about the midpoint of $[n]$, which also preserves absolute differences. Thus both transformations are natural candidates to remain symmetries of the constrained problem.

\begin{prop}[Constraint-preserving pattern symmetries]
\label{prop:preserved-symmetries}
For every pattern $\tau$, and for all $n$ and $m$,
\[
a_{n,m}(\tau)
=
a_{n,m}(R(\tau))
=
a_{n,m}(C(\tau))
=
a_{n,m}(RC(\tau)).
\]
\end{prop}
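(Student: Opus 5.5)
The plan is to exhibit, for each of the symmetries $R$, $C$, and $RC$, an explicit bijection
\[
\mathcal C_{n,m}(\tau)\longrightarrow\mathcal C_{n,m}(S(\tau)),\qquad \pi\longmapsto S(\pi),
\]
where $S\in\{R,C,RC\}$. Equality of cardinalities then follows immediately. Since $R$ and $C$ are involutions on $S_n$, each map is a bijection on $S_n$ with inverse given by $S$ itself. What remains is to check that $S$ sends $\mathcal C_{n,m}(\tau)$ into $\mathcal C_{n,m}(S(\tau))$; applying the same argument to $S(\tau)$ gives the reverse inclusion.

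\textbf{Step 1: pattern containment.} I will verify the standard fact that $\pi$ contains $\tau$ if and only if $S(\pi)$ contains $S(\tau)$.

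\begin{itemize}
\item For reversal: an occurrence of $\tau$ at positions $i_1<\cdots<i_k$ becomes, after reading the positions $n+1-i_k<\cdots<n+1-i_1$, an occurrence of $R(\tau)$. The relative order of the values is unchanged, and only the left-to-right order is flipped.
\item For complement: the positions stay the same, and $\pi_{i_a}<\pi_{i_b}$ holds if and only if $n+1-\pi_{i_a}>n+1-\pi_{i_b}$. So the standardized subsequence becomes $C(\tau)$.
\item The reverse-complement follows by composing the two cases, and the converse follows because both maps are involutions.
\end{itemize}

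\textbf{Step 2: the adjacency constraint.} I will check that each map preserves the multiset of absolute adjacent differences.

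\begin{itemize}
\item For reversal, the consecutive pairs of $R(\pi)$ are $(\pi_{n+1-i},\pi_{n-i})$. These are exactly the consecutive pairs of $\pi$ read backwards, so
\[
|R(\pi)_{i+1}-R(\pi)_i| = |\pi_{n-i}-\pi_{n+1-i}|.
\]
\item For complement,
\[
|C(\pi)_{i+1}-C(\pi)_i| = |\pi_i-\pi_{i+1}|.
\]
\end{itemize}

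Hence $\max_i|\pi_{i+1}-\pi_i|\le m$ if and only if the same bound holds for $S(\pi)$. This is the same observation already used in the proof of Theorem~\ref{thm:localization}, where reverse-complement was noted to preserve absolute adjacent differences.

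I do not expect a genuine obstacle here. The only point requiring care is Step 1, specifically making sure the pattern transforms to $S(\tau)$ rather than to some other image. For reversal this means checking that the reversed positional order is correctly standardized to $R(\tau)$, which I would do directly from the definition of standardization.
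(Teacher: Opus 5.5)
Your proposal is correct and follows essentially the same route as the paper: both check that reversal and complement preserve the absolute adjacent differences, and both combine this with the correspondence ``$\pi$ avoids $\tau$ if and only if $S(\pi)$ avoids $S(\tau)$'' so that these involutions restrict to bijections between the constrained classes. The only difference is that you verify the pattern correspondence explicitly, whereas the paper simply invokes it as standard.
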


\begin{proof}
Let
\[
\pi\in S_n.
\]

For reversal,
\[
R(\pi)_i=\pi_{n+1-i},
\]
so
\[
\begin{aligned}
|R(\pi)_{i+1}-R(\pi)_i|
&=
|\pi_{n-i}-\pi_{n+1-i}|.
\end{aligned}
\]
Thus the adjacent differences of $R(\pi)$ are exactly the adjacent differences of $\pi$ read in the opposite order. Hence
\[
|\pi_{i+1}-\pi_i|\le m
\quad\text{for all }i
\]
if and only if the same condition holds for $R(\pi)$.

For complement,
\[
C(\pi)_i=n+1-\pi_i,
\]
and thus
\[
\begin{aligned}
|C(\pi)_{i+1}-C(\pi)_i|
&=
|(n+1-\pi_{i+1})-(n+1-\pi_i)|\\
&=
|\pi_{i+1}-\pi_i|.
\end{aligned}
\]
Thus, complement also preserves the adjacency restriction. 

The standard correspondence for permutation patterns gives
\[
\pi\text{ avoids }\tau
\iff
R(\pi)\text{ avoids }R(\tau),
\]
and similarly for complement and reverse-complement.

Since reversal and complement are bijections on $S_n$, they restrict to bijections between the corresponding adjacency-constrained avoidance classes. Hence
\[
a_{n,m}(\tau)
=
a_{n,m}(R(\tau))
=
a_{n,m}(C(\tau))
=
a_{n,m}(RC(\tau)).
\]
\end{proof}

For patterns of length three, these symmetries immediately divide $S_3$ into two classes.

\begin{cor}[Constraint-preserving symmetry classes in $S_3$]
\label{cor:s3-symmetry-classes}
For all $n$ and $m$,
\[
a_{n,m}(123)=a_{n,m}(321)
\]
and
\[
a_{n,m}(132)
=
a_{n,m}(213)
=
a_{n,m}(231)
=
a_{n,m}(312).
\]
Thus reversal and complement produce the two symmetry orbits
\[
\{123,321\}
\qquad\text{and}\qquad
\{132,213,231,312\}.
\]
\end{cor}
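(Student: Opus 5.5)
The plan is to read the corollary off from Proposition~\ref{prop:preserved-symmetries}. All that remains is to compute the images of each pattern in $S_3$ under $R$, $C$, and $RC$. For the first class, I would check directly that $R(123)=321$ (reading $123$ backwards) and $C(123)=321$ (replacing each entry $x$ by $4-x$). The proposition then gives $a_{n,m}(123)=a_{n,m}(321)$ for all $n$ and $m$.

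For the second class, I would start from $132$ and compute
\[
R(132)=231,\qquad C(132)=312,\qquad RC(132)=R(312)=213.
\]
Applying Proposition~\ref{prop:preserved-symmetries} with $\tau=132$ then gives
\[
a_{n,m}(132)=a_{n,m}(231)=a_{n,m}(312)=a_{n,m}(213),
\]
which is the second displayed chain of equalities.

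To justify calling these the orbits, I would note that $R$ and $C$ are commuting involutions. They therefore generate a group $\{\mathrm{id},R,C,RC\}$ acting on $S_3$. The computations above show that the orbit of $123$ is $\{123,321\}$ and that the orbit of $132$ is $\{132,213,231,312\}$. These two sets are disjoint and together exhaust all six elements of $S_3$, so they are exactly the two orbits. No step here is a real obstacle; the only care needed is to get the complement convention $C(\tau)_i=k+1-\tau_i$ right for patterns of length $k=3$.

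I would add a remark that this corollary claims only equalities within each orbit, not any inequality between the two orbits. Separating the two classes would need an explicit value of $n$ and $m$ at which the counts differ, for example a comparison with the $132$ data, and that is left to the later discussion of how inversion fails.
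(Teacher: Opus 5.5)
Your proposal is correct and follows the paper's route exactly: the corollary is read off from Proposition~\ref{prop:preserved-symmetries} using $R(123)=C(123)=321$ and $R(132)=231$, $C(132)=312$, $RC(132)=213$, and your orbit check under the group $\{\mathrm{id},R,C,RC\}$ is right. One small correction to your closing remark: the two orbits are separated by the direct count $a_{4,2}(123)=6\neq 8=a_{4,2}(132)$ in Proposition~\ref{prop:wilf-failure}, not by the failure of inversion, which only shows that inversion cannot be used to merge the orbits.
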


In particular, every enumerative result established above for $123$ transfers immediately to $321$. The two patterns have identical counting sequences, generating functions, recurrences, and exponential growth constants under the same adjacency constraint.

The remaining standard symmetry, inversion, behaves differently. The reason is visible directly from the constraint.

The original condition
\[
|\pi_{i+1}-\pi_i|\le m
\]
says that entries occupying consecutive positions must have nearby values. For the inverse permutation, the corresponding condition is
\[
|\pi^{-1}(j+1)-\pi^{-1}(j)|\le m,
\]
which says that consecutive \emph{values} must occur in nearby \emph{positions}. These are different requirements.

\begin{prop}[Inversion does not preserve bounded adjacency]
\label{prop:inverse-fails}
The condition
\[
|\pi_{i+1}-\pi_i|\le m
\]
is not, in general, preserved by inversion.
\end{prop}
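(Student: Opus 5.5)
The plan is to prove the proposition with one explicit counterexample. Since it is a non-preservation claim, it suffices to exhibit a single permutation $\pi$ and radius $m$ such that $\pi$ satisfies $|\pi_{i+1}-\pi_i|\le m$ for all $i$, while $\pi^{-1}$ violates the same bound. The natural choice is $m=1$. There the constraint is extremely rigid: by the argument of Proposition~\ref{prop:m1}, only the monotone permutations satisfy it. So we look instead for a permutation whose inverse satisfies the $m=1$ bound but which itself does not, and then swap the roles of $\pi$ and $\pi^{-1}$.

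It is cleaner, though, to take $m=2$ and write down a small example directly. Consider
\[
\pi=2\,4\,1\,3\in S_4 .
\]
Its adjacent differences are $2,3,2$, which fails for $m=2$, so this is the wrong direction. Instead take
\[
\pi=1\,3\,2\,4 ,
\]
whose adjacent differences are $2,1,2$, all at most $2$. Its inverse is $1\,3\,2\,4$ again, which is useless. So I will search for a permutation with small adjacent value differences whose inverse has large adjacent position differences. A good candidate is
\[
\pi=3\,1\,2\,4\,5 .
\]
Its adjacent differences are $2,1,2,1$, so $\pi$ satisfies the bound with $m=2$. Its inverse is $\pi^{-1}=2\,3\,1\,4\,5$, with adjacent differences $1,2,3,1$. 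The difference $3$ exceeds $m=2$. The key steps are therefore: fix $m=2$ and this $\pi$; verify the four adjacent differences of $\pi$; compute $\pi^{-1}$ by reading off the positions of $1,\dots,5$; and exhibit the violating pair $\pi^{-1}(3)=1$, $\pi^{-1}(4)=4$.

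It would also be natural to remark that the example generalizes to every $m\ge1$. One takes $\pi=(m+1)\,1\,2\cdots m\,(m+2)\cdots n$. In $\pi$, the value $m$ sits in position $m+1$ and the value $m+1$ sits in position $1$, so $\pi^{-1}$ has the adjacent difference $|\pi^{-1}(m+1)-\pi^{-1}(m)|=m$. That is still allowed, so this construction needs adjusting to reach $m+1$. The fix is to check the pair $\pi^{-1}(m+1)=1$, $\pi^{-1}(m+2)=m+2$, which gives difference $m+1>m$. Meanwhile the adjacent differences of $\pi$ itself are $m,1,\dots,1,2,1,\dots$, all at most $m$ once $m\ge2$.

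No step is a real obstacle; the only care needed is choosing the example correctly and computing the inverse without error. If the general family is included, the main thing to double-check is the boundary where $m$ is followed by $m+2$ in $\pi$, which has difference $2$ and so needs $m\ge2$; the case $m=1$ can be handled separately or simply omitted, since one counterexample already proves the proposition.
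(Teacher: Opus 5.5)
Your counterexample $\pi=31245$ with $m=2$ is correct: $\pi$ has adjacent differences $2,1,2,1$, and $\pi^{-1}=23145$ contains the jump $|4-1|=3$. This is the paper's approach as well, since it also exhibits a single explicit counterexample at $m=2$ (namely $\pi=1342$ with $\pi^{-1}=1423$). One small correction concerns your side remarks. Your general family needs $m\ge2$ and $n\ge m+2$. The case $m=1$ cannot be ``handled separately'' at all, because the only admissible permutations there are the identity and its reversal, both involutions, so inversion does preserve the bound when $m=1$.
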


\begin{proof}
Take
\[
\pi=1342
\qquad\text{and}\qquad
m=2.
\]
The adjacent differences of $\pi$ are
\[
|3-1|=2,\qquad
|4-3|=1,\qquad
|2-4|=2,
\]
so $\pi$ satisfies the adjacency restriction.

However,
\[
\pi^{-1}=1423.
\]
Its first adjacent difference is
\[
|4-1|=3>2.
\]
Thus $\pi^{-1}$ does not satisfy the same adjacency restriction.
\end{proof}

The failure of inversion is conceptually important even though, for patterns of length three, reversal and complement already produce the two orbits in Corollary~\ref{cor:s3-symmetry-classes}. It shows that the full collection of standard permutation-plot symmetries cannot automatically be imported into the constrained setting. The adjacency condition distinguishes the roles of positions and values, and inversion exchanges those roles.

\subsection{Failure of ordinary Wilf equivalence}
\label{subsec:wilf-failure}

The symmetry classes above tell us which constrained enumerations are \emph{forced} to agree. They do not tell us whether the two classes might nevertheless have the same cardinality for some additional reason. In the unconstrained problem, they do. Every length-three avoidance class is Catalan, so
\[
|\operatorname{Av}_n(123)|
=
|\operatorname{Av}_n(132)|
=
\cdots
=
C_n.
\]
This equality is stronger than what reversal and complement alone imply. Under the adjacency restriction, that additional equivalence fails.

\begin{prop}[Failure of length-three Wilf equivalence]
\label{prop:wilf-failure}
For
\[
n=4
\qquad\text{and}\qquad
m=2,
\]
we have
\[
a_{4,2}(123)=a_{4,2}(321)=6,
\]
whereas
\[
a_{4,2}(132)
=
a_{4,2}(213)
=
a_{4,2}(231)
=
a_{4,2}(312)
=
8.
\]
Consequently, the six length-three patterns are not Wilf-equivalent under the bounded-adjacency constraint.
\end{prop}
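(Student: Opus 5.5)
The plan is a direct finite computation at $n=4$, $m=2$. By Corollary~\ref{cor:s3-symmetry-classes} it suffices to compute one representative from each symmetry orbit, namely $a_{4,2}(123)$ and $a_{4,2}(132)$. The other four values then follow from Proposition~\ref{prop:preserved-symmetries}.

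The first step is to list the set $\mathcal A$ of all $\pi\in S_4$ with $|\pi_{i+1}-\pi_i|\le 2$. I will organize this by the position of $4$. Its neighbors must lie in $\{2,3\}$, and the value $1$ can only be adjacent to $2$ or $3$. Working through the cases gives
\[
\mathcal A=\{1234,\,1243,\,1324,\,1342,\,2134,\,2431,\,3124,\,3421,\,4213,\,4231,\,4312,\,4321\},
\]
which has $12$ elements. As a check, the bijections $R$ and $C$ preserve $\mathcal A$, so the list must be closed under reversal and complement. For example, $1234\leftrightarrow4321$, $1243\leftrightarrow3421$, $1324\leftrightarrow4231$, and $1342\leftrightarrow2431$ under reversal. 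I will verify this closure explicitly as a sanity check, and I expect it to be the main place where an error could slip in.

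The second step is to filter $\mathcal A$ by pattern avoidance. For $123$, the elements containing an increasing triple are $1234$, $1243$, $1324$, $1342$, $2134$, and $3124$. That leaves exactly $2431$, $3421$, $4213$, $4231$, $4312$, and $4321$, giving $6$. This agrees with the paper's $m=2$ sequence value $a_{4,2}=6$, which is a useful cross-check against the transfer computation. For $132$, the elements containing the pattern are $1243$, $1342$, $2431$, and $1324$. That leaves $1234$, $2134$, $3124$, $3421$, $4213$, $4231$, $4312$, and $4321$, giving $8$.

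Finally, since $6\ne 8$, the two orbits have different counts at $(n,m)=(4,2)$, so Wilf equivalence fails. The only real risk is the bookkeeping in the case check. I will mitigate it with the $R$/$C$-closure test and by computing $a_{4,2}(231)$ independently to confirm that it also equals $8$.
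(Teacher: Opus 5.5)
Your proposal is correct and follows the paper's own proof. You enumerate $\mathcal C_{4,2}(123)$ and $\mathcal C_{4,2}(132)$ directly, which gives $6$ and $8$. Your intermediate list of the $12$ permutations of $[4]$ with $1$ and $4$ non-adjacent and both filtered sets check out. You then transfer the remaining four values via Corollary~\ref{cor:s3-symmetry-classes}, as the paper does.
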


\begin{proof}
A direct enumeration gives
\[
\mathcal C_{4,2}(123)
=
\{
2431,\,
3421,\,
4213,\,
4231,\,
4312,\,
4321
\},
\]
so
\[
a_{4,2}(123)=6.
\]

Similarly,
\[
\begin{aligned}
\mathcal C_{4,2}(132)
=
\{&
1234,\,
2134,\,
3124,\,
3421,\\
&
4213,\,
4231,\,
4312,\,
4321
\},
\end{aligned}
\]
and hence
\[
a_{4,2}(132)=8.
\]

The remaining equalities follow from Corollary~\ref{cor:s3-symmetry-classes}.
\end{proof}

The distinction is worth emphasizing. Even in ordinary pattern avoidance, the geometric symmetries generated by reversal and complement divide $S_3$ into the two orbits
\[
\{123,321\}
\qquad\text{and}\qquad
\{132,213,231,312\}.
\]
The fact that both orbits are counted by the Catalan numbers is an additional enumerative coincidence, not a consequence of those symmetries alone. The bounded-adjacency restriction preserves the equalities within each orbit but destroys the equality between the two orbits. Thus, the analysis developed in this paper for $123$ simultaneously solves the $321$ case, but it does not automatically extend to the remaining four patterns. Those patterns form a genuinely different constrained enumeration problem.

\section{Algorithmic and graph-theoretic interpretations}
\label{sec:graph-algorithmic}

The preceding sections developed the enumeration primarily in the language of permutations and active profiles. Both viewpoints also admit natural graph-theoretic interpretations. At the level of an individual permutation, the bounded-adjacency condition turns the problem into one of finding Hamiltonian paths in a power of the ordinary path graph. At the level of recursive growth, the profile transition graph $G_m$ records the possible ways in which an admissible permutation can progress when successive maxima are inserted.

These are different graphs serving different purposes. For example,  $P_n^m$ describes the admissible permutations themselves, where $G_m$ describes their possible extension histories. The first interpretation clarifies the geometry of the original constraint. The second explains why exact enumeration can be carried out by a sparse dynamic program and why the same machinery also supports exact random generation.

\subsection{A Hamiltonian-path interpretation}
\label{subsec:hamiltonian}

The adjacency condition
\[
|\pi_{i+1}-\pi_i|\le m
\]
already has the form of an edge condition: two consecutive entries are allowed to occur next to one another precisely when their numerical values differ by at most $m$. This suggests placing an edge between exactly those values. Related Hamiltonian-path formulations for permutations with bounded adjacent differences appear in Avgustinovich and Kitaev~\cite{AvgustinovichKitaev2008}. Here the additional \(123\)-avoidance condition becomes a restriction on the label order along the path.

\begin{definition}[Power of the path graph]
\label{def:path-power}
Let $P_n$ be the path graph with vertex set
\[
[n]=\{1,2,\ldots,n\}.
\]
For $m\ge1$, the \emph{$m$th power of the path graph}, denoted $P_n^m$, is the graph on the same vertex set in which two distinct vertices $u,v$ are adjacent precisely when
\[
|u-v|\le m.
\]
\end{definition}

A permutation
\[
\pi=\pi_1\pi_2\cdots\pi_n
\]
is already an ordering of the vertices of $P_n^m$. Since every value of $[n]$ appears exactly once, reading the permutation from left to right visits every vertex exactly once. The bounded-adjacency condition says precisely that each consecutive pair in this ordering is an edge. Thus one of the two defining conditions of $\mathcal C_{n,m}$ becomes a standard graph-theoretic condition.

\begin{theorem}[Hamiltonian-path interpretation]
\label{thm:hamiltonian-path}
There is a natural bijection between $\mathcal C_{n,m}$ and the set of oriented Hamiltonian paths
\[
v_1,v_2,\ldots,v_n
\]
in $P_n^m$ whose vertex-label sequence avoids $123$.

Equivalently,
\[
\mathcal C_{n,m}
\]
is the set of oriented Hamiltonian paths in $P_n^m$ whose label sequence has longest increasing subsequence of length at most $2$.
\end{theorem}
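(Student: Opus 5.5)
The plan is to write down the obvious map and check that it respects both defining conditions of $\mathcal C_{n,m}$. To $\pi=\pi_1\cdots\pi_n\in S_n$ we associate the vertex sequence $v_i=\pi_i$ in $P_n^m$. In the other direction, an oriented Hamiltonian path $v_1,\ldots,v_n$ visits each vertex of $[n]$ exactly once, so its label sequence is a permutation of $[n]$. These two assignments are inverse to each other, so they give a bijection between $S_n$ and the set of all vertex orderings of $[n]$. The substance of the proof is therefore to show that, under this correspondence, membership in $\mathcal C_{n,m}$ matches exactly the Hamiltonian-path condition together with $123$-avoidance of the labels.

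The first step is adjacency. A vertex ordering $v_1,\ldots,v_n$ is a Hamiltonian path precisely when $v_iv_{i+1}$ is an edge of $P_n^m$ for every $1\le i<n$. By Definition~\ref{def:path-power}, and because the $v_i$ are distinct, this says exactly that $|v_{i+1}-v_i|\le m$, which is the adjacency condition on $\pi$. Orientation needs a brief remark. An unoriented Hamiltonian path $P$ and its reversal are different orderings, and they correspond to $\pi$ and $R(\pi)$ respectively. Since $R(\pi)$ avoids $321$ rather than $123$ in general, we must work with oriented paths, as the theorem does. This keeps the map injective and introduces no factor of $2$.

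The second step is the pattern condition. It transfers verbatim, because the label sequence of the path is $\pi$ itself. For the equivalent formulation, we use the standard fact that a permutation avoids $123$ if and only if it has no increasing subsequence of length $3$. This holds because an occurrence of $123$ is, by definition, an increasing subsequence of length $3$, and any longer increasing subsequence contains one of length $3$. Hence avoiding $123$ is equivalent to having longest increasing subsequence of length at most $2$.

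No step presents a real obstacle. The only point that needs care is the orientation convention: the correspondence must be with oriented paths, since $123$-avoidance is not invariant under reversal.
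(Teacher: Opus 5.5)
Your proposal is correct and follows essentially the same route as the paper. Both arguments identify $\pi$ with the vertex ordering $\pi_1,\ldots,\pi_n$, observe that the edge condition of $P_n^m$ on consecutive (distinct) vertices is exactly the bounded-adjacency condition, and note that $123$-avoidance is the same as $\operatorname{LIS}\le 2$. Your extra remark, that $\pi$ and $R(\pi)$ give different oriented paths and $123$-avoidance is not reversal-invariant, is a valid clarification of why the theorem must speak of oriented paths; the paper leaves this implicit.
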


\begin{proof}
Let
\[
\pi=\pi_1\pi_2\cdots\pi_n\in S_n.
\]
View the entries of $\pi$ as an ordering of the vertices of $P_n^m$. 

Since $\pi$ is a permutation, the sequence
\[
\pi_1,\pi_2,\ldots,\pi_n
\]
visits every vertex of $[n]$ exactly once.

By Definition~\ref{def:path-power},
\[
\pi_i
\sim
\pi_{i+1}
\quad\Longleftrightarrow\quad
|\pi_{i+1}-\pi_i|\le m.
\]
Hence the bounded-adjacency condition is equivalent to saying that the vertex ordering defined by $\pi$ is an oriented Hamiltonian path in $P_n^m$.

The remaining condition is exactly the pattern condition. The word
\[
\pi_1\pi_2\cdots\pi_n
\]
avoids $123$ if and only if it contains no increasing subsequence of length $3$, which is equivalent to
\[
\operatorname{LIS}(\pi)\le2.
\]

The correspondence does not alter the underlying ordering, so it is bijective.
\end{proof}

The theorem thus gives the equivalent formulation
\[
a_{n,m}
=
\#\left\{
\begin{array}{c}
\text{oriented Hamiltonian paths in }P_n^m\\
\text{whose label sequence has }\operatorname{LIS}\le2
\end{array}
\right\}.
\]

This reformulation is useful because it separates the two restrictions in a particularly concrete way:
\[
\text{bounded adjacency}
\iff
\text{the path uses only edges of }P_n^m
\]
and
\[
123\text{-avoidance}
\iff
\operatorname{LIS}\le2\text{ along the visitation order}.
\]

Thus the original problem may be viewed as a constrained routing problem: visit every vertex exactly once, take only jumps of size at most $m$, and never allow the sequence of visited labels to contain an increasing triple. The reverse-diagonal localization theorem also has a simple interpretation in this model.

\begin{remark}[Localization as a bounded corridor]
\label{rem:graph-localization}
The completely decreasing permutation
\[
n,n-1,\ldots,1
\]
is itself an oriented Hamiltonian path in $P_n^m$ for every $m\ge1$.

At time $i$, this path visits the vertex
\[
n+1-i.
\]
The localization theorem gives
\[
|\pi_i-(n+1-i)|\le m.
\]
Hence every admissible Hamiltonian path must, at time $i$, visit a vertex lying within graph-label distance $m$ of the vertex visited by the completely decreasing traversal.

In this sense, every valid path remains inside a width-$m$ corridor around
\[
n,n-1,\ldots,1.
\]
The path may deviate from the decreasing order, but it can never wander arbitrarily far from it.
\end{remark}

The two extreme adjacency regimes are also immediate from this viewpoint. When $m=1$,
\[
P_n^1=P_n,
\]
so there are only two oriented Hamiltonian paths, namely the increasing and decreasing traversals. At the opposite extreme, if
\[
m\ge n-1,
\]
then
\[
P_n^m=K_n,
\]
so the graph places no restriction at all on the vertex ordering. The problem then reduces to ordinary $123$-avoidance. The Hamiltonian-path model describes what an individual admissible permutation looks like. We now turn to a second graph, which instead describes how admissible permutations grow.

\subsection{The profile transition graph}
\label{subsec:transition-graph-interpretation}

Recall the profile transition graph $G_m$ introduced in Definition~\ref{def:profile-transition-graph}. Its vertices are the realizable profiles
\[
\Omega_m,
\]
and there is a directed edge
\[
\omega\longrightarrow\eta
\]
whenever insertion at one of the active sites of $\omega$ produces the child profile $\eta$.

By Proposition~\ref{prop:zero-one}, distinct active sites of a fixed state produce distinct child states. Consequently, there is at most one edge between any ordered pair of states, and the transfer matrix $T_m$ is exactly the $0$--$1$ adjacency matrix of $G_m$. This gives a direct interpretation of powers of $T_m$.

\begin{prop}[Walk interpretation of the transfer matrix]
\label{prop:walk-interpretation} 
For states
\[
\omega,\eta\in\Omega_m
\]
and every $k\ge0$,
\[
(T_m^k)_{\omega,\eta}
\]
is the number of directed walks of length $k$ in $G_m$ beginning at $\omega$ and ending at $\eta$.

Moreover, if
\[
n=m+1+k,
\]
then every permutation in $\mathcal C_{n,m}$ is uniquely determined by

\begin{enumerate}
    \item an initial permutation in $\mathcal C_{m+1,m}$, and
    \item a directed walk of length $k$ in $G_m$ beginning at the active profile of that initial permutation.
\end{enumerate}
\end{prop}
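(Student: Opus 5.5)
The first assertion is the standard walk-counting fact for adjacency matrices, and I would prove it by induction on $k$. For $k=0$, $T_m^0=I$, and there is exactly one walk of length $0$ from $\omega$ to $\eta$ when $\omega=\eta$ and none otherwise. For the inductive step, write
\[
(T_m^{k+1})_{\omega,\eta}=\sum_{\zeta\in\Omega_m}(T_m^k)_{\omega,\zeta}(T_m)_{\zeta,\eta}.
\]
Every walk of length $k+1$ from $\omega$ to $\eta$ splits uniquely into a walk of length $k$ ending at some $\zeta$, followed by an edge $\zeta\to\eta$. Proposition~\ref{prop:zero-one} guarantees that $(T_m)_{\zeta,\eta}\in\{0,1\}$ and that it is exactly the indicator of that edge in $G_m$, so the sum counts each such walk once.

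For the second assertion, I would build a map from $\mathcal C_{n,m}$ to pairs (initial permutation, walk) by repeatedly deleting the maximum. Given $\pi\in\mathcal C_{n,m}$ with $n=m+1+k$, Lemma~\ref{lem:max-deletion} applied $k$ times gives a chain
\[
\pi^{(m+1)},\pi^{(m+2)},\ldots,\pi^{(n)}=\pi,
\]
with $\pi^{(t)}\in\mathcal C_{t,m}$ and each term obtained from the previous one by inserting $t$ at an active site. Since $t-1\ge m+1$ at every insertion step, Proposition~\ref{prop:profile-sufficiency} shows that the profiles $\lambda_m(\pi^{(t)})$ form a walk in $G_m$ starting at $\lambda_m(\pi^{(m+1)})$. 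Each edge exists because the step is $\Phi_{m,j}$ for the active site $j$ that was used.

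The main point is injectivity, and more precisely recovering the insertion site from the edge. By Proposition~\ref{prop:zero-one}, the children $\Phi_{m,0}(\omega),\ldots,\Phi_{m,\ell-1}(\omega)$ are pairwise distinct. Hence, given the current permutation $\pi^{(t)}$ with profile $\omega$ and the next walk vertex $\eta$, there is exactly one active-site index $j$ with $\Phi_{m,j}(\omega)=\eta$, and $\pi^{(t+1)}=I_j(\pi^{(t)})$ is determined. Inducting along the walk, the initial permutation and the walk reconstruct $\pi$ uniquely.

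Conversely, for surjectivity, any walk $\omega_0\to\omega_1\to\cdots\to\omega_k$ starting at $\lambda_m(\sigma)$ with $\sigma\in\mathcal C_{m+1,m}$ is realized step by step. The edge $\omega_i\to\omega_{i+1}$ comes from some active site $j$, and inserting the new maximum there keeps us in the class (this is the definition of an active site) and, by Proposition~\ref{prop:profile-sufficiency}, produces profile $\omega_{i+1}$. A walk that reaches $\dagger$ simply cannot be continued, which is consistent because $\dagger$ has no outgoing edges. This gives the bijection.
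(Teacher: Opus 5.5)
Your proposal is correct and follows the paper's argument. Both use the standard walk-counting fact for the $0$--$1$ adjacency matrix $T_m$, and both pair the unique maximum-deletion chain from Lemma~\ref{lem:max-deletion} with insertion along edges. You also make explicit a step the paper leaves implicit: Proposition~\ref{prop:zero-one} is what makes an edge $\omega\to\eta$ pick out a unique active site, so reconstruction from (initial permutation, walk) is well defined.
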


\begin{proof}
The first statement is the standard interpretation of powers of an adjacency matrix.

For the second, begin with
\[
\pi^{(0)}\in\mathcal C_{m+1,m}.
\]
Each edge of a walk in $G_m$ corresponds to insertion at one active site, and, as a result, determines the next permutation by inserting the next maximum. After $k$ transitions, this produces a permutation of length
\[
m+1+k.
\]

Conversely, starting from any
\[
\pi\in\mathcal C_{m+1+k,m},
\]
repeatedly delete the current maximum. Closure under maximum deletion produces a unique chain
\[
\pi
\longrightarrow
\pi^{(k-1)}
\longrightarrow
\cdots
\longrightarrow
\pi^{(0)}
\]
ending at length $m+1$.

Thus the initial permutation and the sequence of profile transitions are uniquely recoverable from the final permutation.
\end{proof}

The transfer formula
\[
a_{m+1+k,m}
=
v_mT_m^k\mathbf 1
\]
has a purely combinatorial interpretation. The vector $v_m$ counts the possible initial permutations in each starting state, $T_m^k$ counts their possible $k$-step extension histories, and $\mathbf 1$ sums over all possible terminal states.

The graph $G_m$ is much smaller than the space of permutations it encodes. Its exact size can also be determined.

\begin{prop}[Size of the profile transition graph]
\label{prop:transition-graph-size}
The graph $G_m$ has
\[
|V(G_m)|
=
2^{m+1}-m-1
\]
vertices and
\[
|E(G_m)|
=
\frac{m+1}{2}
\left(
2^{m+1}-m-2
\right)
\]
directed edges.
\end{prop}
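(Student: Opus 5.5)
The vertex count is Corollary~\ref{cor:state-count}, so only the edge count needs work. By Proposition~\ref{prop:zero-one}, $T_m$ is a $0$--$1$ matrix. Each nondead state $\omega$ has out-degree exactly $|\omega|$, and the dead state has out-degree $0$. So the number of edges is the total length of all nondead realizable profiles:
\[
|E(G_m)|=\sum_{\ell=1}^{m}\ell\,N_{m,\ell}=\sum_{\ell=1}^{m}\ell\left(\binom{m+1}{\ell}-1\right),
\]
using Corollary~\ref{cor:profiles-fixed-length} for $N_{m,\ell}$.

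The rest is a binomial sum. Use the standard identity
\[
\sum_{\ell=0}^{m+1}\ell\binom{m+1}{\ell}=(m+1)2^{m}.
\]
Remove the $\ell=m+1$ term, which equals $m+1$. This gives
\[
\sum_{\ell=1}^{m}\ell\binom{m+1}{\ell}=(m+1)2^m-(m+1).
\]
Next subtract $\sum_{\ell=1}^m\ell=m(m+1)/2$. The result is
\[
(m+1)\Bigl(2^m-1-\tfrac{m}{2}\Bigr)=\frac{m+1}{2}\bigl(2^{m+1}-m-2\bigr),
\]
which is the claimed formula.

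The only point needing care is the use of Proposition~\ref{prop:zero-one}. Out-degree equals profile length only because distinct active sites give distinct children; if two sites produced the same child, we would have a single edge of weight $2$ rather than two edges. That proposition already rules this out, so summing out-degrees counts edges exactly. As a sanity check, for $m=2$ the formula gives $\tfrac32\cdot 4=6$, which matches the six transitions listed in Proposition~\ref{prop:m2-transitions}.
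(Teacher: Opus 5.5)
Your proof is correct and follows the paper's argument: you sum out-degrees $\sum_{\ell=1}^{m}\ell\bigl(\binom{m+1}{\ell}-1\bigr)$ over the realizable nondead profiles, then evaluate this with $\sum_{\ell}\ell\binom{m+1}{\ell}=(m+1)2^m$ and $\sum_{\ell=1}^{m}\ell=m(m+1)/2$. You also cite Proposition~\ref{prop:zero-one} explicitly to justify that out-degree equals profile length, a step the paper leaves implicit when it asserts there are exactly $\ell$ outgoing edges.
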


\begin{proof}
The vertex count is the exact state count from Corollary~\ref{cor:state-count}:
\[
|V(G_m)|
=
|\Omega_m|
=
2^{m+1}-m-1.
\]

A realizable profile of length $\ell$ has exactly $\ell$ active sites and, accordingly, exactly $\ell$ outgoing edges.

By Corollary~\ref{cor:profiles-fixed-length}, the number of realizable profiles of length $\ell$ is
\[
\binom{m+1}{\ell}-1.
\]
The dead state has no outgoing edges. Hence
\[
|E(G_m)|
=
\sum_{\ell=1}^{m}
\ell
\left(
\binom{m+1}{\ell}-1
\right).
\]

Now
\[
\sum_{\ell=0}^{m+1}
\ell
\binom{m+1}{\ell}
=
(m+1)2^m.
\]
Removing the $\ell=m+1$ term gives
\[
\sum_{\ell=1}^{m}
\ell
\binom{m+1}{\ell}
=
(m+1)2^m-(m+1).
\]
Also,
\[
\sum_{\ell=1}^{m}\ell
=
\frac{m(m+1)}{2}.
\]
Therefore
\[
\begin{aligned}
|E(G_m)|
&=
(m+1)2^m-(m+1)
-\frac{m(m+1)}{2}\\
&=
\frac{m+1}{2}
\left(
2^{m+1}-m-2
\right).
\end{aligned}
\]
\end{proof}

In particular,
\[
|V(G_m)|=O(2^m)
\]
and
\[
|E(G_m)|=O(m2^m).
\]

This sparsity is important. A generic matrix with
\[
O(2^m)
\]
rows and columns could have
\[
O(4^m)
\]
nonzero entries. The transfer matrix arising here has only
\[
O(m2^m)
\]
nonzero entries because a state has at most $m$ possible children.

The state description is thus not only finite; it is algorithmically sparse.

\subsection{Sparse enumeration and exact generation}
\label{subsec:algorithms}

The transfer relation
\[
u_{n+1,m}
=
u_{n,m}T_m
\]
could be implemented by ordinary matrix multiplication, but doing so would ignore the sparse structure of $G_m$. Instead, one may update the state counts directly along the edges of the transition graph. For each state $\omega$, its current count is added once to each child state $\eta$ satisfying
\[
\omega\longrightarrow\eta.
\]

Each edge is, consequently, processed exactly once per level.

\begin{cor}[Sparse dynamic-programming complexity]
\label{cor:sparse-dp}
Once the state space, transition rule, and initial vector have been constructed, the values
\[
a_{m+1,m},
a_{m+2,m},
\ldots,
a_{N,m}
\]
can be computed using
\[
O(Nm2^m)
\]
arithmetic operations.

If the full transition graph is stored explicitly, it requires
\[
O(m2^m)
\]
storage. If transitions are generated directly from the maps $\Phi_{m,j}$ as needed, the dynamic program requires only
\[
O(2^m)
\]
state-count entries of working storage, aside from the representation of the current state being processed.
\end{cor}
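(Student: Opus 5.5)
The plan is to count operations level by level and invoke Proposition~\ref{prop:transition-graph-size} for the number of edges. By Theorem~\ref{thm:finite-state-enumeration}, we start from $u_{m+1,m}=v_m$, which is given explicitly by Theorem~\ref{thm:initial-vector}, and pass from level $n$ to level $n+1$ through $u_{n+1,m}=u_{n,m}T_m$. Because $T_m$ is the $0$--$1$ adjacency matrix of $G_m$ (Proposition~\ref{prop:zero-one}), this vector--matrix product is the same as the edge-wise update
\[
u_{n+1,m}(\eta)=\sum_{\omega\to\eta}u_{n,m}(\omega).
\]
It can be carried out by first zeroing a new vector of length $|\Omega_m|$ and then, for each edge $\omega\to\eta$, performing one addition. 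One level therefore costs $O(|\Omega_m|+|E(G_m)|)$ arithmetic operations.

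Next, the sizes. Proposition~\ref{prop:transition-graph-size} gives $|\Omega_m|=2^{m+1}-m-1=O(2^m)$ and $|E(G_m)|=\tfrac{m+1}{2}(2^{m+1}-m-2)=O(m2^m)$. The cost per level is therefore $O(m2^m)$. To read off $a_{n,m}=u_{n,m}\mathbf 1$ we sum the coordinates, which is another $O(2^m)$ operations. Doing this for the $N-m$ levels up to $N$ gives $O(Nm2^m)$ operations in total.

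For storage, an explicit graph can be held as adjacency lists with one entry per vertex and per edge, which is $O(m2^m)$. If we do not store the graph, we loop over the nondead $\omega\in\Omega_m$, compute each child $\Phi_{m,j}(\omega)$ for $0\le j\le|\omega|-1$ from Definition~\ref{def:phi}, and add $u_{n,m}(\omega)$ into the slot for that child. In this mode the only working arrays are the current and next count vectors, two vectors of length $|\Omega_m|=O(2^m)$, together with the tuple currently being processed.

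The step I expect to need care is locating a child's slot in the count vector when transitions are generated on the fly. The statement counts arithmetic operations on the counts, so this bookkeeping is not charged to them, but it should still be cheap. I would index states by a ranking function using Theorem~\ref{thm:realizable-profiles}. An increasing profile is ranked by its subset of $\{0,\dots,m-1\}$. A terminal-ascent profile is ranked by its set $\{d_1,\dots,d_{\ell-1}\}$ together with a type flag. The dead state gets its own index. This makes every lookup a bijective computation costing $O(m)$ bit operations and adds no extra storage. I would also remark that computing each $\Phi_{m,j}(\omega)$ takes $O(m)$ elementary steps, so it does not change the count of arithmetic operations on the counts.
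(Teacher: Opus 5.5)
Your proposal is correct and follows the paper's argument. The paper also uses one edge-wise pass over $G_m$ per level, costing $O(|E(G_m)|)=O(m2^m)$ additions by Proposition~\ref{prop:transition-graph-size}, stores the graph as adjacency lists in $O(m2^m)$ space, and keeps only two count vectors of length $|\Omega_m|=O(2^m)$ when children are generated from the maps $\Phi_{m,j}$. Your extra accounting (zeroing the next vector, summing coordinates to read off $a_{n,m}$, and indexing states by subsets of $\{0,\dots,m-1\}$ plus a type flag) is a sound refinement that the paper leaves implicit.
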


\begin{proof}
By Proposition~\ref{prop:transition-graph-size},
\[
|E(G_m)|
=
O(m2^m).
\]
One update
\[
u_{n,m}
\longmapsto
u_{n+1,m}
\]
can be performed by traversing every directed edge once and adding the count at its source to the count at its target. Hence one level requires
\[
O(m2^m)
\]
additions.

Repeating this through length $N$ gives
\[
O(Nm2^m)
\]
arithmetic operations.

Explicitly storing $G_m$ requires one record for each vertex and edge, and hence
\[
O(m2^m)
\]
storage.

Alternatively, the children of a state may be generated directly using
\[
\Phi_{m,0},
\Phi_{m,1},
\ldots,
\Phi_{m,\ell-1}.
\]
In that implementation, only the current and next state-count vectors need to be retained. Each has
\[
|\Omega_m|=O(2^m)
\]
coordinates.
\end{proof}

The complexity above is stated in arithmetic operations: an addition of two integer state counts is treated as one operation. Since the counts themselves grow exponentially with $n$, their bit lengths also grow with $n$, so a bit-complexity analysis would include the cost of operating on these larger integers. For fixed $m$, however, the important structural point is that the number of state updates grows only linearly with $N$. The same dynamic program can be run backward to count completions from each state. This gives an exact uniform sampling algorithm.

\begin{definition}[Completion count]
\label{def:completion-count}
For a state $\omega\in\Omega_m$ and an integer $t\ge0$, define
\[
F_t(\omega)
=
\text{the number of directed walks of length $t$ in $G_m$
beginning at $\omega$}.
\]
\end{definition}

These quantities satisfy
\[
F_0(\omega)=1
\]
and, for $t\ge0$,
\[
F_{t+1}(\omega)
=
\sum_{\omega\to\eta}
F_t(\eta).
\]

Thus $F_t(\omega)$ counts exactly how many length-$t$ extension histories remain available from the state $\omega$.

For the sampling procedure, we regard the finite initial level \(C_{m+1,m}\) as part of the preprocessing. For each
\(\omega\in\Omega_m\), let
\[
\mathcal{I}_m(\omega)
=
\left\{
\pi\in C_{m+1,m}:
\lambda_m(\pi)=\omega
\right\}.
\]
Then
\[
|\mathcal{I}_m(\omega)|=v_m(\omega).
\]
Fix an ordering of each finite set \(\mathcal{I}_m(\omega)\), or, equivalently for sampling purposes, assume access to an exact uniform sampler on each such initial fiber. Since \(m\) is fixed, this is a finite preprocessing step independent of the target length \(n\). If complexity as a function of \(m\) is to be tracked, the cost of this initial-fiber preprocessing should be included separately.

\begin{prop}[Exact uniform generation]
\label{prop:uniform-generation}
Fix
\[
n=m+1+k.
\]
Assume that the values
\[
F_t(\omega),
\qquad
0\le t\le k,
\]
have been computed for all $\omega\in\Omega_m$. A uniformly random permutation from $\mathcal C_{n,m}$ may be generated as follows. First choose an initial state $\omega$ with probability
\[
\frac{
v_m(\omega)F_k(\omega)
}{
a_{n,m}
}.
\]
Conditional on choosing \(\omega\), choose uniformly from
\[
\mathcal{I}_m(\omega).
\]

Now suppose the current state is $\omega$ and $t\ge1$ insertions remain. Choose an outgoing edge
\[
\omega\longrightarrow\eta
\]
with probability
\[
\frac{F_{t-1}(\eta)}{F_t(\omega)}.
\]
Insert the next maximum at the active site represented by that edge, move to $\eta$, and continue until no insertions remain. The final permutation is uniformly distributed over $\mathcal C_{n,m}$.
\end{prop}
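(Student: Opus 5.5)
The plan is to use the bijection of Proposition~\ref{prop:walk-interpretation}: every $\pi\in\mathcal C_{n,m}$ with $n=m+1+k$ corresponds to exactly one pair consisting of an initial permutation $\pi^{(0)}\in\mathcal C_{m+1,m}$ and a directed walk of length $k$ in $G_m$ starting at $\lambda_m(\pi^{(0)})$. By Proposition~\ref{prop:zero-one}, distinct active sites of a state give distinct child states. So a walk given as a sequence of states $\omega_0\to\omega_1\to\cdots\to\omega_k$ fixes the active-site index used at each step. This means that sampling a walk edge by edge, as the procedure does, is the same as choosing the insertion sites. It therefore suffices to show that the procedure outputs each pair $(\pi^{(0)},\text{walk})$ with probability $1/a_{n,m}$.

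First I would check that the procedure is well defined, namely that the probabilities at each stage sum to $1$. The recurrence $F_{t}(\omega)=\sum_{\omega\to\eta}F_{t-1}(\eta)$ handles the edge choices. For the initial choice, I need
\[
\sum_{\omega}v_m(\omega)F_k(\omega)=v_mT_m^k\mathbf 1=a_{n,m}.
\]
This follows from Theorem~\ref{thm:finite-state-enumeration} once we note that $F_k(\omega)=(T_m^k\mathbf 1)_\omega$, which is immediate from the first part of Proposition~\ref{prop:walk-interpretation}. One detail needs care: if $F_t(\omega)=0$, the state is never reached, because it would have been chosen with probability zero. The dead state $\dagger$ has $F_t=0$ for $t\ge1$, so the process never stalls there with insertions still remaining.

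Next comes the probability of a fixed outcome. Take $\pi^{(0)}$ in state $\omega_0$ and a walk $\omega_0\to\cdots\to\omega_k$. Its probability is
\[
\frac{v_m(\omega_0)F_k(\omega_0)}{a_{n,m}}\cdot\frac{1}{v_m(\omega_0)}\cdot\prod_{t=1}^{k}\frac{F_{k-t}(\omega_t)}{F_{k-t+1}(\omega_{t-1})}.
\]
The product telescopes to $F_0(\omega_k)/F_k(\omega_0)=1/F_k(\omega_0)$, so the whole expression equals $1/a_{n,m}$. Here $|\mathcal I_m(\omega_0)|=v_m(\omega_0)$ accounts for the uniform choice within the initial fiber.

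Finally, I need that the permutation actually produced at each step has the profile the walk predicts. This is where the state $\eta$ "moved to" must really be $\lambda_m$ of the new permutation, and I expect it to be the main point requiring care. Proposition~\ref{prop:profile-sufficiency} supplies it: inserting at site $s_j$ gives profile $\Phi_{m,j}(\omega)$. Closure under profile transitions (Corollary~\ref{cor:profile-closure}) keeps the states inside $\Omega_m$. Combined with the bijection, this shows that each element of $\mathcal C_{n,m}$ arises from exactly one run, with probability $1/a_{n,m}$, so the output is uniform.
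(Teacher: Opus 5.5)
Your proposal is correct and follows the same route as the paper. Both weight the initial state by $v_m(\omega)F_k(\omega)$, choose each edge in proportion to the completion count $F_{t-1}(\eta)$, and use the unique-parent property, via the walk bijection, to conclude that each permutation arises from exactly one run; your explicit telescoping product, well-definedness check, and appeal to Propositions~\ref{prop:zero-one} and~\ref{prop:profile-sufficiency} make precise what the paper summarizes as ``iterating this argument.''
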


\begin{proof}
An initial state $\omega$ has
\[
v_m(\omega)
\]
possible initial permutations, and each such permutation has
\[
F_k(\omega)
\]
possible $k$-step extension histories. Thus
\[
v_m(\omega)F_k(\omega)
\]
is exactly the number of final permutations whose construction begins in state $\omega$.

Since
\[
a_{n,m}
=
\sum_{\omega\in\Omega_m}
v_m(\omega)F_k(\omega),
\]
the first sampling step chooses the correct fraction of the total class. Now suppose the current state is $\omega$ and $t$ steps remain. The total number of completions is
\[
F_t(\omega)
=
\sum_{\omega\to\eta}
F_{t-1}(\eta).
\]
The edge
\[
\omega\to\eta
\]
leads to exactly
\[
F_{t-1}(\eta)
\]
of those completions. Choosing the edge with probability
\[
\frac{F_{t-1}(\eta)}{F_t(\omega)}
\]
hence chooses among the possible future completions in the correct proportion. Iterating this argument shows that every full extension history has equal probability conditional on its initial permutation.

Finally, the unique-parent property implies that distinct initial permutations and insertion histories produce distinct final permutations. Hence every element of $\mathcal C_{n,m}$ is generated with probability
\[
\frac1{a_{n,m}}.
\]
\end{proof}

The algorithm has a simple interpretation: at each step, an outgoing edge is weighted by the number of valid permutations that can still be completed after taking that edge. The sampler, as a result, never has to generate an invalid permutation and reject it afterward.

\begin{remark}[Ranking and unranking]
\label{rem:ranking}
The same completion counts also support exact ranking and unranking.

Use the fixed ordering of each initial fiber
\(\mathcal{I}_m(\omega)\), and fix, for every state, an ordering of its outgoing active-site transitions. The quantity
\[
F_{t-1}(\eta)
\]
gives the size of the block of permutations beginning with the transition
\[
\omega\to\eta.
\]
A rank can thus be computed by summing the sizes of all earlier blocks along the unique insertion history of a permutation. Conversely, given a rank, these same block sizes determine which outgoing edge contains that rank. Repeating this step reconstructs the unique permutation having the prescribed rank.
\end{remark}

\subsection{Interpretation and scope}
\label{subsec:graph-scope}

The two graph models developed in this section encode different levels of the same problem. The path power $P_n^m$ is an \emph{object-level} graph. A permutation in $\mathcal C_{n,m}$ is an oriented Hamiltonian path in this graph subject to an additional order restriction:
\[
\operatorname{LIS}\le2.
\]
This formulation makes the bounded-jump geometry explicit and gives a natural interpretation of the localization theorem as confinement near the decreasing Hamiltonian traversal. The profile graph $G_m$ is instead a \emph{state-level} graph. It forgets most of the current permutation and remembers only the information needed to determine future maximum insertions. A directed walk in $G_m$ records the successive extension types encountered as the permutation grows.

Schematically,
\[
\text{permutation}
\quad\longleftrightarrow\quad
\text{Hamiltonian path in }P_n^m
\]
describes the object itself, whereas
\[
\text{successive active profiles}
\quad\longleftrightarrow\quad
\text{directed walk in }G_m
\]
describes its recursive construction.

The first viewpoint is primarily interpretive. Rephrasing the problem as a Hamiltonian-path problem does not, by itself, produce an easier enumeration: Hamiltonian paths are difficult to count in general, and the additional longest-increasing-subsequence condition remains global.

The second viewpoint is the source of the computational compression. For fixed $m$, the graph $G_m$ has only
\[
2^{m+1}-m-1
\]
states and
\[
O(m2^m)
\]
edges, independent of $n$. This finite graph is sufficient for exact counting, spectral analysis, uniform sampling, and ranking.

In particular, if the number of admissible length-$k$ extension histories grows on the scale
\[
\rho_m^k,
\]
then
\[
\log\rho_m
\]
may be viewed as the exponential growth rate per insertion of the finite-state extension process.

A classical bijection between \(123\)-avoiding permutations and Dyck paths, such as the one developed by Krattenthaler~\cite{Krattenthaler2001}, could provide additional insight if the bounded-adjacency condition translated into a comparably simple path restriction. The graph interpretations above require no such further bijection: they arise directly from the defining adjacency condition and from the finite-state insertion structure developed in the paper.

\section{Applications}

The transfer-matrix construction developed above is useful beyond enumeration. Its main feature is that the active profile records all of the information needed to determine which maximum insertions remain possible. Thus, two permutations with the same profile have the same possible future profile transitions. This allows the same state space to be used not only for counting, but also for optimization and exact random generation. Optimization under permutation-pattern restrictions has also been studied more broadly~\cite{BerendsohnKozmaOpler2024}.

\subsection{Weighted constrained sequencing}

One natural extension is to assign a cost to each jump in the permutation. Let \(c(u,v)\) denote the cost of placing \(v\) immediately after \(u\), and define
\[
W(\pi)=\sum_{i=1}^{n-1} c(\pi_i,\pi_{i+1}).
\]
We may then ask for a permutation in \(\mathcal{C}_{n,m}\) minimizing \(W(\pi)\). Equivalently, this asks for a minimum-cost oriented Hamiltonian path in \(P_n^m\) whose label order avoids \(123\). The same active profile used for counting contains enough information to update this cost. Suppose \(\omega=(d_1,\ldots,d_\ell)\) is the profile of a permutation of length \(n\). If \(n+1\) is inserted at the front, the only new edge is \((n+1,n-d_1)\), so the cost changes by
\[
\Delta_{n,0}(\omega)=c(n+1,n-d_1).
\]

If instead \(n+1\) is inserted between \(n-d_j\) and \(n-d_{j+1}\), the edge \((n-d_j,n-d_{j+1})\) is removed and replaced by \((n-d_j,n+1)\) and \((n+1,n-d_{j+1})\). Hence
\[
\Delta_{n,j}(\omega)
=
c(n-d_j,n+1)
+
c(n+1,n-d_{j+1})
-
c(n-d_j,n-d_{j+1}).
\]

The important point is that this change depends only on \(n\), the current profile, and the chosen active site. Nothing else about the permutation is needed.

For \(n\ge m+1\) and \(\omega\in\Omega_m\), let
\[
M_{n,m}(\omega)
=
\min
\left\{
W(\pi):
\pi\in C_{n,m},
\ \lambda_m(\pi)=\omega
\right\},
\]
with the convention that
\[
M_{n,m}(\omega)=+\infty
\]
if no permutation of length \(n\) has profile \(\omega\).

The dynamic program is initialized at length \(m+1\) by
\[
M_{m+1,m}(\omega)
=
\min
\left\{
W(\pi):
\pi\in C_{m+1,m},
\ \lambda_m(\pi)=\omega
\right\}.
\]
By Theorem~\ref{thm:realizable-profiles}, every state in \(\Omega_m\) is already realizable at this level, so each of these initial minima is taken over a nonempty finite set.

For \(n\ge m+1\), the subsequent values satisfy
\[
M_{n+1,m}(\eta)
=
\min_{\substack{
\omega\in\Omega_m\setminus\{\dagger\},\,
0\le j<|\omega|\\
\Phi_{m,j}(\omega)=\eta
}}
\left\{
M_{n,m}(\omega)+\Delta_{n,j}(\omega)
\right\}.
\]
Thus, after the initial layer and the profile transition graph have been constructed, each level requires one relaxation for each edge of \(G_m\). Since the graph has
\[
O(m2^m)
\]
transitions, the optimization through length \(n\) uses
\[
O(nm2^m)
\]
state-transition relaxations. This count treats evaluation of each required cost \(c(u,v)\) as a constant-time operation; if the cost function itself is computationally nontrivial, its evaluation cost should be included separately.

The same idea applies to objectives other than minimum cost. Transition weights can instead be used to maximize an objective, keep track of a statistic, or form a weighted generating function. In the particularly natural case \(c(u,v)=g(|u-v|)\), the cost update depends only on the deficits stored in the profile. The weighted transition rule is then independent of \(n\), just as in the unweighted counting problem.

\subsection{Exact generation and ranking}

Exact uniform generation, together with ranking and unranking, follows from the backward completion counts developed in Section~\ref{subsec:algorithms}. Proposition~\ref{prop:uniform-generation} gives the exact sampler, while Remark~\ref{rem:ranking} explains the corresponding ranking and unranking procedures. Hence, we do not need to repeat those constructions here. Together with the weighted optimization above, they illustrate that the active-profile state retains enough information for exact algorithmic operations on the class.

\section{Further Directions}

The finite-state description above depends on two special features of the problem: the local jump restriction and the particularly simple effect of inserting a new maximum into a \(123\)-avoiding permutation. Several natural questions arise from asking how much of the argument survives when either feature is changed.

\subsection{Minimality of the profile representation}

For fixed \(m\), the construction uses \(2^{m+1}-m-1\) profiles. These states arise naturally from the proof, but we have not shown that all of them are necessary.

It is possible that two different profiles always have identical future behavior and could thus be merged without changing the resulting counting sequence. This leads to the natural question of determining the smallest state space capable of enumerating \(\mathcal{C}_{n,m}\) exactly.

In particular, it would be interesting to determine whether the exponential dependence on \(m\) is unavoidable. A substantially smaller representation would reveal additional structure not captured by the present profiles. Conversely, an exponential lower bound would show that the size of the current state space reflects a genuine feature of the problem rather than merely the particular encoding used here.

\subsection{Other pattern restrictions}

The role of \(123\)-avoidance in the proof is unusually simple. When the new maximum \(n+1\) is inserted into a permutation, it creates a copy of \(123\) exactly when the portion preceding it contains an ascent. Thus, a legal insertion site must have a decreasing prefix.

This is what makes the pattern restriction interact so cleanly with the local jump condition. It is natural to ask what happens for another fixed pattern \(\tau\). Define
\[
\mathcal{C}_{n,m}(\tau)
=
\left\{
\pi\in\operatorname{Av}_n(\tau):
|\pi_{i+1}-\pi_i|\le m
\text{ for all }i
\right\}.
\]
A broader question is then to determine for which patterns \(\tau\) the class \(\mathcal{C}_{n,m}(\tau)\) admits a finite-state description for fixed \(m\).

The answer need not be the same for every pattern. What matters is how inserting the new maximum can create a forbidden occurrence. For \(123\), this depends only on whether the preceding prefix contains an ascent. For other patterns, relevant entries may occur on both sides of the new maximum, and more information may have to be retained in the state. A particularly natural test case is avoidance of \(12\cdots k\), or equivalently the condition \(\operatorname{LIS}(\pi)\le k-1\). When \(k=3\), a legal insertion before the new maximum requires the preceding prefix to have longest increasing subsequence of length at most \(1\), which simply means that it is decreasing. For \(k>3\), the corresponding prefix need only avoid \(12\cdots(k-1)\), and may, as a result, have much more complicated structure. Structural decompositions of permutations have also been used algorithmically in permutation-pattern matching. Guillemot and Marx introduced a width measure and obtained an algorithm whose running time is linear in the input length for every fixed pattern size~\cite{GuillemotMarx2014}. This suggests the more general algorithmic question of whether, for fixed \(m\) and \(k\), bounded-jump permutations satisfying \(\operatorname{LIS}(\pi)\le k-1\) can still be counted in \(f(m,k)n^{O(1)}\) time for some function \(f\).

\subsection{Beyond powers of paths}

The graph interpretation suggests another direction. The condition \(|\pi_{i+1}-\pi_i|\le m\) means that the permutation gives an oriented Hamiltonian path in the \(m\)-th power of a path, \(P_n^m\). The deletion argument used above relies heavily on the ordered structure of this graph. In particular, the earlier neighbors of a newly introduced large vertex occur in a controlled interval. Proper interval graphs admit an ordering with a closely related interval-neighborhood property: if
\[
v_i < v_j < v_k
\]
in such an ordering and \(v_iv_k\) is an edge, then both \(v_iv_j\)
and \(v_jv_k\) are edges~\cite{LO93}. This ordered neighborhood structure makes proper interval graphs a natural first test case. Given an ordering \(v_1<\cdots<v_n\) compatible with a proper interval representation, one may ask whether Hamiltonian paths whose index sequence avoids \(123\) can again be described using only a bounded amount of information near the newest vertex.

More broadly, it would be useful to determine which ordered graph classes admit a bounded profile under maximum insertion. Such a result would separate the features of the present proof that are specific to \(P_n^m\) from those that belong to a more general theory of pattern-constrained Hamiltonian paths on ordered graphs.

\subsection{The limit as the jump bound grows}

Theorem~\ref{thm:rho-bounds} only gives the upper estimate
\[
4-\rho_m = O(\frac{\log m}{m})
\]
and the asymptotic behavior
\[
\rho_m\longrightarrow 4.
\]

A natural next question is to determine the genuine rate at which $\rho_m$ approaches $4$. In particular, it would be valuable to know whether the logarithmic factor in the present upper bound is needed, and whether $4-\rho_m$ gives a sharper asymptotic formula.

The growing radius regime also leaves finer open questions to solve. Corollary~\ref{cor:growing-radius} shows that whenever $m = m(n) \to \infty$,
\[
a_{n,m(n)}^{1/n}\longrightarrow 4,
\]
so every diverging adjacency radius recovers the unrestricted Catalan growth rate on the exponential scale. However, this does not describe how many Catalan permutations survive the adjacency restriction at sub-exponential scales. 

As a result, one possible refinement is to study the ratio 
\[
\frac{a_{n,m(n)}}{C_n}
\]
for different growth regimes of $m(n)$. Depending on the rate at which $m(n)$ increases, this ratio may exhibit behavior that is invisible after taking $n\text{th}$ roots. More generally, one may ask for asymptotic estimates of $a_{n, m}$ that remain informative when both $m$ and $n$ vary. Such results would give a finer description of the transition between the fixed-$m$ finite-state regime and ordinary $123$-avoidance, beyond the exponential scale convergence established above. 

\nocite{*}
\bibliographystyle{alpha}
\bibliography{references}

\end{document}